\documentclass[a4paper,twoside,10pt]{article}
\usepackage[a4paper,left=3cm,right=3cm, top=3cm, bottom=3cm]{geometry}
\usepackage[utf8]{inputenc}
\usepackage{mathrsfs}
\usepackage{graphicx}
\usepackage{epstopdf}
\usepackage{amsmath}
\usepackage{amsthm}
\usepackage[font=footnotesize,labelfont=bf]{caption}
\usepackage{algorithm,algpseudocode}
\usepackage{appendix}
\usepackage{amssymb}
\usepackage{stmaryrd}
\usepackage{cite}
\usepackage{xcolor}
\usepackage{pdfcomment}
\SetSymbolFont{stmry}{bold}{U}{stmry}{m}{n}

\usepackage{tikz}
\usetikzlibrary{calc}
\usetikzlibrary{arrows.meta,bending,positioning} 

%\usepackage[dvipsnames]{xcolor}
 %colors
 %colors
 %colors
 %colors
 %colors
 %colors

\restylefloat{table}
\theoremstyle{plain}
\newtheorem{thm}{Theorem}[section]
\newtheorem{cor}[thm]{Corollary}
\newtheorem{lem}[thm]{Lemma}

\newtheorem{defn}[thm]{Definition}

\newtheorem{remark}[thm]{Remark}
\newtheorem{assumption}[thm]{Assumption}

\numberwithin{equation}{section}

%% MACROS
\newcommand{\Norm}[2]{\left\Vert #1 \right\Vert_{#2}}
\newcommand{\SemiNorm}[2]{\left\vert #1 \right\vert_{#2}}
\let\div\relax
\DeclareMathOperator{\div}{div}

\newcommand{\sigmabold}{\boldsymbol \sigma}
\newcommand{\Nvert}{N_{{\rm vert}}}

\newcommand{\Ccontnu}{C_{{\rm cont},\nu}}
\newcommand{\Cstabnu}{C_{{\rm stab},\nu}}
\newcommand{\CcontE}{C_{{\rm cont},\E}}
\newcommand{\CstabE}{C_{{\rm stab},\E}}

%% MINOR MACROS
\newcommand{\jump}[1]{\left[\!\left[#1\right]\!\right]}
\newcommand{\p}{p}
\newcommand{\h}{h}
\newcommand{\E}{K}
\newcommand{\SE}{S^\E}
\newcommand{\aE}{a^\E}
\newcommand{\ahE}{\aE_\h}
\newcommand{\uh}{u_\h}
\newcommand{\Pinablap}{\Pi^{\nabla}_\p}
\newcommand{\Scalh}{\mathcal S_\h}
\newcommand{\Gcalh}{\boldsymbol{\mathcal G}_\h}
\newcommand{\Nbb}{\mathbb N}
\newcommand{\Pbb}{\mathbb P}
\newcommand{\Rbb}{\mathbb R}
\newcommand{\RT}{\pmb{\mathbb{RT}}}
\newcommand{\Pbbs}{\pmb{\mathbb P}}
\newcommand{\nbf}{\mathbf n}

\newcommand{\nbfE}{\nbf^\E}
\newcommand{\Rcal}{\mathcal R}
\newcommand{\Nucalh}{\mathcal V_\h}
\newcommand{\omeganu}{\omega^\nu}
\newcommand{\rnu}{r^\nu}
\newcommand{\rnuh}{\rnu_\h}
\newcommand{\sigmaboldnu}{\boldsymbol \sigma^\nu}
\newcommand{\sigmaboldnuh}{\boldsymbol \sigma^\nu_\h}
\newcommand{\tauboldnu}{\boldsymbol \tau^\nu}
\newcommand{\tauboldnuh}{\boldsymbol \tau^\nu_\h}

\newcommand{\etanu}{\eta_\nu}
\newcommand{\etanuFLUX}{\eta_{\nu,{\rm FL}}}
\newcommand{\etanuPOT}{\eta_{\nu,{\rm PT}}}

\newcommand{\taun}{\mathcal T_\h}
\newcommand{\taunnu}{\taun^\nu}

\newcommand{\tautilden}{\widetilde{\mathcal T}_\h}
\newcommand{\tautildennu}{\tautilden^{\nu}}

\newcommand{\tautildeE}{\tautilden^\E}

\newcommand{\Ical}{\mathcal I}
\newcommand{\Vcaln}{\mathcal V_\h}
\newcommand{\VcalnB}{\Vcaln^B}
\newcommand{\VcalnI}{\Vcaln^I}
\newcommand{\Ecaln}{\mathcal E_\h}
\newcommand{\Ecalnu}{\mathcal E_{h,\nu}}
\newcommand{\EcalnuB}{\mathcal E_{h,\nu}^{\rm B}}
\newcommand{\EcalnB}{\Ecaln^B}
\newcommand{\EcalnI}{\Ecaln^I}
\newcommand{\xbfE}{\mathbf x_\E}
\newcommand{\hE}{\h_\E}

\newcommand{\VcalE}{\mathcal V^\E}
\newcommand{\EcalE}{\mathcal E^\E}
\newcommand{\e}{e}
\newcommand{\he}{\h_\e}

\let\div\relax
\DeclareMathOperator{\div}{div}
\newcommand{\Vh}{V_h}
\newcommand{\VhE}{\Vh(\E)}
\newcommand{\vh}{v_h}
\newcommand{\boldalpha}{\boldsymbol\alpha}
\newcommand{\mboldalpha}{m_{\boldalpha}}
\newcommand{\qp}{q_\p}
\newcommand{\qell}{q_\ell}
\newcommand{\qpmt}{q_{\p-2}}
\newcommand{\Piz}{\Pi^0_{\p-2}}
\newcommand{\ah}{a_\h}
\newcommand{\thetabold}{\boldsymbol \theta}
\newcommand{\thetaboldh}{\thetabold_\h}

\newcommand{\nablah}{\nabla_\h}

\newcommand{\phinu}{\varphi^\nu}

\newcommand{\qnu}{q^\nu}
\newcommand{\qnuh}{\qnu_h}

\newcommand{\snuh}{s_h^\nu}

\newcommand{\zh}{z_\h}

\newcommand{\snu}{s^\nu}
\newcommand{\etaE}{\eta_\E}
\newcommand{\ERR}{\mathcal E}

\newcommand{\omegaE}{\omega^\E}

\newcommand{\Gfrakh}{\mathfrak G_h}
\newcommand{\GfrakhE}{\Gfrakh^\E}
\newcommand{\Pizez}{\Pi^{0,\e}_0}
\newcommand{\Htildeonenu}{\widetilde H^1(\omeganu)}
\newcommand{\Honenu}{H^1(\omeganu)}
\newcommand{\btau}{\boldsymbol \tau}
\newcommand{\bG}{\boldsymbol G}
\newcommand{\bL}{\boldsymbol L}
\newcommand{\bH}{\boldsymbol H}
\newcommand{\bx}{\boldsymbol x}
\newcommand{\bn}{\boldsymbol n}
\newcommand{\ceff}{c_{\rm eff}}
\newcommand{\crel}{c_{\rm rel}}
\newcommand{\nbfe}{\nbf_\e}
\newcommand{\xp}{x_\p}
\newcommand{\Xp}{X_\p}
\newcommand{\XpE}{\Xp(\E)}

%%%%
\author{\normalsize{Th\'eophile Chaumont-Frelet\thanks{
Inria Univ. Lille and Laboratoire Paul Painlev\'e, 59655 Villeneuve-d'Ascq, France,
{\tt theophile.chaumont@inria.fr}},
Joscha Gedicke\thanks{Institut f\"ur Numerische Simulation, Universit\"at Bonn, 53115 Bonn,
{\tt gedicke@ins.uni-bonn.de}},
Lorenzo Mascotto\thanks{Dipartimento di Matematica e Applicazioni, Universit\`a di Milano Bicocca, 20125 Milan, Italy, {\tt lorenzo.mascotto@unimib.it};
IMATI-CNR, 27100, Pavia, Italy;
Fakult\"at f\"ur Mathematik, Universit\"at Wien, 1090 Vienna, Austria}}}{}
\date{}

%%%%
\date{}
%%%%
\title{\normalsize{Generalised gradients for virtual elements and applications to a posteriori error analysis}}{}

%%%%%%%%%%%%%%%%%%%%%%%%%%%%%%%%%%%%%
\begin{document}
%%%%%%%%%%%%%%%%%%%%%%%%%%%%%%%%%%%%%

\maketitle
\begin{abstract}
\noindent
We rewrite the standard nodal virtual element method
in two dimensions
as a generalised gradient method.
This re-formulation allows for computing
a reliable and efficient error estimator
by locally reconstructing broken fluxes and potentials
on elemental subtriangulations.
We prove upper and lower bounds with constants independent of the stabilisation of the method
and, under technical assumptions on the mesh,
the degree of accuracy.
\medskip

\noindent\textbf{AMS subject classification}:  65N12; 65N15.
\medskip
		
\noindent
\textbf{Keywords}: adaptivity; polygonal mesh; flux reconstruction;
potential reconstruction; virtual element method; generalised gradient.
\end{abstract}

%----------------------------------
\section{Introduction} \label{section:introduction}
%----------------------------------

The virtual element method (VEM) \cite{Beirao-Brezzi-Cangiani-Manzini-Marini-Russo:2013}
is a generalisation of the finite element method (FEM)
to polytopic meshes whose elements may have
curved and/or hanging facets, internal cuts, and nonconvex shapes.
Virtual element spaces typically consist of functions that
solve local variational problems with polynomial data
and are not required to be known in closed form,
but rather via their evaluation through suitable degrees of freedom.
Such degrees of freedom are selected so as to permit
the computation of projection operators onto polynomial spaces,
which are inserted in the bilinear form
in order to preserve the polynomial consistency of the scheme.
The well posedness of the method is guaranteed by
supplementing the projected bilinear form with a stabilisation,
which has to be computable via the degrees of freedom.

One among the appealing features of the scheme
is that adaptive mesh refinements admit the creation of hanging facets
without the need of generating extra (useless) elements.
The literature on the adaptive VEM is rather wide.
We mention here only the early contributions~\cite{BeiraodaVeiga-Manzini:2015, Cangiani-Georgulis-Pryer-Sutton:2017, Berrone-Borio:2017}.
Almost all the related references are concerned with residual-type error estimators,
which present two major downsides:
\begin{itemize}
    \item the constants in the reliability and efficiency bounds depend on the choice of the stabilisation;
    \item as in the finite element case~\cite{Melenk-Wohlmuth:2001},
    the efficiency constant depends on the degree of accuracy of the method~\cite{BeiraodaVeiga-Manzini-Mascotto:2019}.
\end{itemize}
The first issue becomes relevant when performing aggressive mesh refinement and/or coarsening:
ad-hoc stabilisations should be designed so as to derive a posteriori
error bounds, which are robust with respect to the presence of
small facets, highly distorted elements, and so on.
Partial improvements are discussed in~\cite{Beirao-Canuto-Nochetto-Vacca-Verani:2023}
where the upper bound is independent of the stabilisation
for lowest order elements on triangular grids.

Using local flux equilibration strategies,
$\p$-robust error estimators were introduced
for the FEM and discontinuous Galerkin methods;
see, e.g., \cite{Braess-Pillwein-Schoeberl:2009, Ern-Vohralik:2020}.
Inspired by the above works,
a first attempt in deriving a $\p$-robust error estimator for the VEM
was performed in~\cite{Dassi-Gedicke-Mascotto:2022}.
There, a $\p$-robust error estimator was derived
based on the computation of a virtual \emph{global} flux.
However, straightforwardly adapting the \emph{localisation}
procedure in~\cite{Destuynder-Metivet:1999,Ern-Vohralik:2015} to the VEM
led to the loss of the efficiency,
one of the main reasons being the lack of Galerkin orthogonality
for the gradient of the discrete solution.

This motivated us to rewrite in this contribution the VEM as 
a generalised gradient method.
This amounts to introducing a post-processing of
the discrete solution into a generalised gradient
that exhibits favourable orthogonality properties.
More precisely, on each element~$\E$,
we rewrite the usual virtual element discrete bilinear form,
which typically looks like
\[
\ahE(\cdot,\cdot) 
= \aE(\Pi \cdot, \Pi \cdot) 
    + \SE( (I-\Pi) \cdot, (I-\Pi)\cdot),
\]
$\Pi$ being a computable polynomial projector
and~$\SE(\cdot,\cdot)$ a computable stabilisation,
as a generalised gradient bilinear form.
In particular, we shall write
\[
\ahE(\cdot,\cdot) = (\GfrakhE(\cdot), \nabla \cdot)_{\E},
\]
where~$\GfrakhE(\cdot)$ is a generalised gradient
given by a piecewise Raviart-Thomas function over a subtriangulation of~$\E$,
which contains information on the stabilisation.
This generalised gradient is available for \emph{any} computable
stabilisation and is obtained by solving local finite element problems.
These problems can be independently solved in parallel
and their solution is cheap.
The subtriangulations mentioned above
are always used in practice on the elemental level;
as such, the approach presented in this paper
can be interesting per se while compared to a finite element one,
where typically elements are refined using newest-vertex bisection
and possibly neighbouring elements are to be refined to maintain
the conformity of the method.

Even if the focus of the paper is on conforming virtual elements
for the Poisson problem in two dimensions,
extensions to other settings are possible.
For instance, nonconforming virtual elements
or virtual elements in three dimensions (with simplicial facets)
in principle fit to the setting described above.
In fact, the construction of a generalised gradient
in the context of the virtual element
is based on only two ingredients:
a projection onto piecewise polynomials
and a lifting of the stabilisation term.

The use of a generalised gradient is also instrumental in the design 
of a posteriori error estimators
for interior penalty discontinuous Galerkin schemes on simplicial and tensor product meshes,
which also suffer from a lack of Galerkin orthogonality caused by the stabilisation~\cite{Ern-Vohralik:2015}.
A similar construction was also used in~\cite{DiPietro-Droniou-Manzini:2018}
in the framework of approximation of solutions to
nonlinear problems for skeletal methods.

This generalised gradient allows for the design of an error estimator
that is reliable and efficient with bounds independent of the chosen stabilisation
as well as of the degree of accuracy of the method;
see Theorem~\ref{theorem:reliability-efficiency}.
Specifically, once the solution~$u_h$ and the generalised gradient
$\Gfrakh(u_h)$ are known,
the computation of the error estimator involves the solution
to local primal and mixed finite element problems
on vertex patches as defined
in~\eqref{eq:definition_etanu} below.

As such, the implementation of the error estimator can be made fully parallel
and no more expensive than computing a residual error estimator.
As we detail in Remark~\ref{remark:comparison_DG} below,
the constants involved in the upper and lower bounds
generalise those obtained in the corresponding bounds
for discontinuous Galerkin schemes in~\cite{Ern-Vohralik:2015}
to polytopic meshes:
the resulting estimates are as good as the state-of-the-art
for another nonconforming numerical method on standard meshes,
yet keeping in mind that a modified error measure is considered.

Numerical examples show that the generalised gradient
provides us with an approximation of the exact solution
comparable to that obtained with a polynomial
energy projection of the discrete solution,
i.e., the standard error measure in virtual elements.
These examples also illustrate that the estimator provides a fairly
sharp upper bound on the error that does not deteriorates
as $\p$ increases.

\paragraph*{Outline of the paper.}

We introduce the setting and the necessary notation in Section~\ref{section:setting}.
We introduce the VEM and an equivalent rewriting
using a generalised gradient
in Section~\ref{section:VEM}.
Some technical results are detailed in Section~\ref{section:technical-results}.
In Section~\ref{section:eq:prager_synge},
we introduce a computable error estimator and state its key properties.
The estimator consists of two terms,
analysed in Sections~\ref{section:flux} and~\ref{section:potential}, respectively.
We present numerical examples in Section~\ref{section:numerical-results}
and draw some conclusions
in Section~\ref{section:conclusions}.
Finally, Appendix~\ref{appendix:discrete-min} presents technical results,
which are related to the $\p$-robustness of the efficiency estimate.

%%%%%%
\paragraph*{Main result.}
We introduce a generalised gradient for the VEM in Definition~\ref{defn:gg}
and show that it can be used to equivalently rewrite the VEM in Theorem~\ref{thm:vem-rewriting}.
The error estimator is introduced in~\eqref{eq:definition_etanu}, and its main properties are
given in Theorem~\ref{theorem:reliability-efficiency}
under Assumption~\ref{assumption:galerkin-orthogonality}.

%----------------------------------
\section{Setting} \label{section:setting}
%----------------------------------
In Section~\ref{subsection:functional-spaces}, we introduce the functional spaces that we shall employ throughout.
The model problem is given in Section~\ref{subsection:continuous-problem},
while polygonal meshes and their assumptions are detailed in Section~\ref{subsection:meshes}.
Several polynomial spaces are recalled in Section~\ref{subsection:polynomials}.

%---------------
\subsection{Functional spaces} \label{subsection:functional-spaces}
%---------------
Consider an open set $D$ of~$\Rbb^2$.
For $q$ in $[1,\infty]$, we employ the standard notation~$L^q(D)$
for Lebesgue spaces equipped with their usual norm $\|{\cdot}\|_{L^q(D)}$.
When $q=2$, $L^2(D)$ is a Hilbert space, and its norm and inner-product
are denoted by $\|{\cdot}\|_D$ and $(\cdot,\cdot)_D$.
For vector-valued functions, we set~$\bL^2(D) := [L^2(D)]^2$,
and use the same notation for its norm and inner-product.
$H^1(D)$ and $\bH(\div,D)$ stand for the Sobolev spaces
of scalar-valued functions~$v$ in~$L^2(D)$
such that~$\nabla v$ belongs to~$\bL^2(D)$
and vector-valued functions~$\btau$ in~$\bL^2(D)$
with~$\div \btau$ in $L^2(D)$, respectively;
we endow the space $H^1(D)$
with its norm, seminorm, and inner-product
$\Norm{\cdot}{1,D}$, $\SemiNorm{\cdot}{1,D}$, and $(\cdot,\cdot)_{1,D}$.
$H^1_0(D)$ is the closure in $H^1(D)$ of the space
of smooth functions with compact support in~$D$.
If $\mathcal D$ is a finite collection of disjoint open sets,
then $H^1(\mathcal D)$ collects functions~$v$
such that $v|_D$ is in~$H^1(D)$ for all $D$ in $\mathcal D$.

%---------------
\subsection{The continuous problem}\label{subsection:continuous-problem}
%---------------
We consider a fixed Lipschitz polygonal domain~$\Omega$ in $\Rbb^2$
and $f$ in $L^2(\Omega)$.
The model problem we are interested in consists in finding $u$ in~$H^1_0(\Omega)$ such that
\begin{equation} \label{weak-formulation}
(\nabla u, \nabla v)_{\Omega} = (f,v)_{\Omega}
\qquad\qquad\qquad\qquad
\forall v \in H^1_0(\Omega) .
\end{equation}
Standard arguments reveal that~\eqref{weak-formulation} is well posed.
In what follows, $u$ will always denote the solution to~\eqref{weak-formulation}.

%---------------
\subsection{Regular polygonal meshes} \label{subsection:meshes}
%---------------
We consider meshes~$\taun$ consisting of open, conforming
\footnote{Conforming here means that the meshes consist of
open simply connected polygons whose boundary is a nonintersecting
line made of a finite number of straight line segments.}
polygonal elements over~$\Omega$.
We denote their set of vertices and facets by~$\Vcaln$ and~$\Ecaln$.
To each element~$\E$ of~$\taun$, we associate its diameter~$\hE$, centroid~$\xbfE$,
set of vertices~$\VcalE$, and set of facets~$\EcalE$.
We introduce the mesh size~$\h := \max_{\E\in\taun}\hE$
and the length~$\he$ of each facet~$\e$ in~$\Ecaln$.
We further split internal and boundary vertices into the sets~$\VcalnI$ and~$\VcalnB$,
and internal and boundary facets into the sets~$\EcalnI$ and~$\EcalnB$.
We denote the maximal number of vertices of the elements of~$\taun$
by $\Nvert$.

Henceforth, we assume the following regularity properties to be valid:
there exists~$\gamma$ in $(0,1]$ such that
\begin{itemize}
    \item each element~$\E$ is star-shaped with respect to a ball~$B(\E)$ of radius larger than or equal to~$\gamma \hE$;
    \item given an element~$\E$, any of its facets~$\e$ has length~$\he$ larger than~$\gamma\hE$.
\end{itemize}
In what follows, $C(\gamma)$ denotes a generic positive constant,
which only depends on the parameter~$\gamma$.

The two above properties imply that each element~$\E$ has a uniformly bounded number of
vertices. Moreover, associated with each~$\E$,
it is possible to construct a shape-regular subtriangulation~$\tautildeE$
by connecting the centre of the ball~$B(\E)$
with the vertices of~$\E$;
see, e.g., \cite{Chen-Huang:2018}.
Using the regularity properties of~$\taun$,
we deduce the regularity of~$\tautildeE$.

We define~$\tautilden$ as the union of all~$\tautildeE$;
$\tautildennu$ denotes the set of triangles
contained in elements of~$\taun$ that share the vertex~$\nu$.
Henceforth, we assume that such a subtriangulation $\tautilden$ is fixed.
The broken gradient over either~$\taun$ or~$\tautilden$ is denoted by~$\nablah$.
\medskip

Only while deriving $\p$-robust efficiency bounds,
we shall require the further technical assumption on the mesh.

\begin{assumption}[Single facet sharing]
\label{assumption_faces}
\begin{subequations}
Any two distinct elements $\E$ and~$\E'$ in~$\taun$ share at most one facet, i.e.,
\begin{equation}
\label{assumption_interior_faces}
\partial K \cap \partial K' \in \Ecaln 
\qquad\qquad
\forall K,K' \in \taun, \; K \neq K', \; \partial K \cap \partial K' \neq \emptyset.
\end{equation}
For any boundary elements, we further have that
\begin{equation} \label{assumption_exterior_faces}
\partial K \cap \partial \Omega \in \Ecaln 
\qquad\qquad
\forall K \in \taun, \; \partial K \cap \partial \Omega \neq \emptyset.
\end{equation}
\end{subequations}
\end{assumption}

\begin{figure}
\centering
\begin{minipage}{.49\linewidth}
\centering
\begin{tikzpicture}[scale=2.5]

\coordinate (a)   at (0,0);

\coordinate (a0)  at ($(a) + (-10:0.9)$);
\coordinate (a1)  at ($(a) + (100:1.1)$);
\coordinate (a2)  at ($(a) + (-110:0.8)$);

\coordinate (a01) at ($(a) + (45:1.4)$);

\coordinate (a11) at ($(a) + (160:1.4)$);
\coordinate (a12) at ($(a) + (190:1.4)$);
\coordinate (a13) at ($(a) + (220:1.4)$);

\coordinate (a21) at ($(a) + (-70:1.2)$);
\coordinate (a22) at ($(a) + (-30:1.4)$);

\draw[ultra thick] (a) -- (a0);
\draw[ultra thick] (a) -- (a1);
\draw[ultra thick] (a) -- (a2);

\draw[ultra thick] (a0) -- (a01) -- (a1);
\draw[ultra thick] (a1) -- (a11) -- (a12) -- (a13) -- (a2);
\draw[ultra thick] (a2) -- (a21) -- (a22) -- (a0);

\fill (a)   circle (0.05);

\fill (a0)  circle (0.05);
\fill (a1)  circle (0.05);
\fill (a2)  circle (0.05);

\fill (a01) circle (0.05);

\fill (a11) circle (0.05);
\fill (a12) circle (0.05);
\fill (a13) circle (0.05);

\fill (a21) circle (0.05);
\fill (a22) circle (0.05);

\end{tikzpicture}
\end{minipage}
\begin{minipage}{.49\linewidth}
\centering
\begin{tikzpicture}[scale=2.5]

\coordinate (a) at (0,0);

\coordinate (a0) at ($(a) + (-10:0.9)$);
\coordinate (a1) at ($(a) + (100:1.1)$);
\coordinate (a2) at ($(a) + (-110:0.8)$);

\coordinate (a01) at ($(a) + (45:1.4)$);

\coordinate (a11) at ($(a) + (160:1.4)$);
\coordinate (a12) at ($(a) + (190:1.4)$);
\coordinate (a13) at ($(a) + (220:1.4)$);

\coordinate (a21) at ($(a) + (-70:1.2)$);
\coordinate (a22) at ($(a) + (-30:1.4)$);

\coordinate (b) at ($(a) + (-140:0.5)$);

\draw[ultra thick] (a) -- (a0);
\draw[ultra thick] (a) -- (a1);
\draw[ultra thick] (a) -- (b) -- (a2);

\draw[ultra thick] (a0) -- (a01) -- (a1);
\draw[ultra thick] (a1) -- (a11) -- (a12) -- (a13) -- (a2);
\draw[ultra thick] (a2) -- (a21) -- (a22) -- (a0);

\fill (a)   circle (0.05);

\fill (a0)  circle (0.05);
\fill (a1)  circle (0.05);
\fill (a2)  circle (0.05);

\fill (a01) circle (0.05);

\fill (a11) circle (0.05);
\fill (a12) circle (0.05);
\fill (a13) circle (0.05);

\fill (a21) circle (0.05);
\fill (a22) circle (0.05);

\fill (b)   circle (0.05);

\end{tikzpicture}
\end{minipage}
\caption{Examples of configurations allowed (\emph{left panel})
and forbidden (\emph{right panel})
under Assumption~\ref{assumption_faces}
for the derivation of $\p$-robust estimates.
The $\h$-version allows for both configurations.}
\label{figure_configurations_faces}
\end{figure}
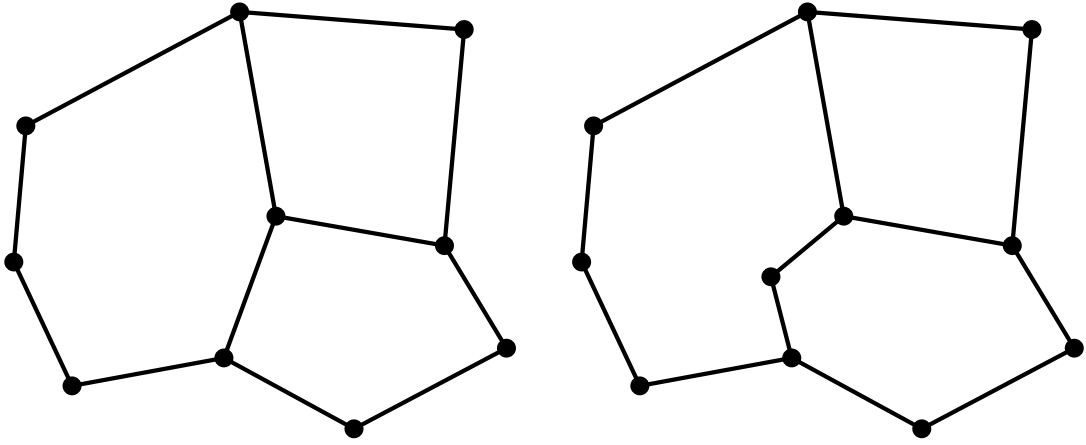

\begin{remark}
In principle, we may also consider more general
(and not necessarily star-shaped) polygonal elements
as long as we can partition them into shape-regular subtriangulations
with mesh size comparable to the diameter of the element.
The existence of a subtriangulation is needed not only
as an abstract device to measure the regularity of the polygonal mesh
but also used in practice in the computation of the generalised gradient.
Similar to the case of meshes not satisfying Assumption~\ref{assumption_faces},
the $\p$-robustness remains an open question.
\end{remark}

%---------------
\subsection{Standard polynomial spaces} \label{subsection:polynomials}
%---------------
If~$D$ in~$\mathbb R^2$ is an open set
and~$q$ is in~$\Nbb$,
$\Pbb_q(D)$ stands for the space of polynomials of degree
at most~$q$ over~$D$.
$\Pbbs_q(D) := [\Pbb_q(D)]^2$ contains vector-valued polynomials
and $\RT_q(D) := \Pbbs_q(D) + \bx \Pbb_q(D)$
is the set of Raviart--Thomas polynomials.
As before, if $\mathcal D$ is a collection of disjoint elements,
then $\Pbb_q(\mathcal D)$, $\Pbbs_q(\mathcal D)$,
and $\RT_q(\mathcal D)$ consist of piecewise polynomial functions
over that collection.
We also use the notation $\mathbb P_{-1}(D) = \{0\}$.

%----------------------------------
\section{The virtual element method and generalised gradients} \label{section:VEM}
%----------------------------------
We present two equivalent versions of the VEM.
In Section~\ref{subsection:VE-spaces}, we introduce the virtual element spaces
with their degrees of freedom,
which allow us to compute the stabilisations
and projection operators
detailed in Section~\ref{subsection:projections-stabilisations}.
The standard VEM is presented in Section~\ref{subsection:VEM},
while an equivalent rewriting based on a generalised gradient
is given in Section~\ref{subsection:gg-rewriting-of-VEM}.

%---------------
\subsection{Virtual element spaces} \label{subsection:VE-spaces}
%---------------
We define~\cite{Beirao-Brezzi-Cangiani-Manzini-Marini-Russo:2013}
the virtual element space of order~$\p$ in~$\Nbb$ on the element~$\E$ as
\[
\VhE:=
\left\{
\vh \in H^1(\E) \mid
\Delta\vh \in \Pbb_{\p-2}(\E),\;\;
\vh{}_{|\partial \E} \in \mathcal C^0(\partial \E),\;
\vh{}_{|\e} \in \Pbb_\p(\e) \ \forall \e \in \EcalE \right\}.
\]
We endow~$\VhE$ with the following unisolvent degrees of freedom~\cite{Beirao-Brezzi-Cangiani-Manzini-Marini-Russo:2013}:
given~$\vh$ in~$\VhE$,
\begin{itemize}
    \item the point values of~$\vh$ at the vertices~$\nu$ in~$\VcalE$;
    \item for each facet~$\e$ of~$\E$, the point values of~$\vh$ at the $\p-1$ internal Gau\ss-Lobatto nodes of~$\e$;
    \item given~$\{ \mboldalpha \}$ a basis of~$\Pbb_{\p-2}(\E)$
    of elements that are invariant
    with respect to dilations and translations
    \footnote{A standard basis satisfying the properties
    above is given by that of scaled monomials
    as detailed, e.g., in \cite{Beirao-Brezzi-Cangiani-Manzini-Marini-Russo:2013}.},
    the scaled moments
    \[
    \vert\E\vert^{-1} \int_\E \vh \ \mboldalpha
    \qquad\qquad\qquad\qquad \forall \vert \boldalpha \vert=0,\dots,\p-2.
    \]
\end{itemize}
The corresponding $H^1$ conforming global space reads
\[
\Vh := \{ \vh \in H^1(\Omega) \mid \vh{}_{|\E} \in \VhE  \}.
\]
A set of unisolvent degrees of freedom for~$\Vh$ is given by the $H^1$ conforming coupling of the degrees of freedom of each~$\VhE$.

%---------------
\subsection{Stabilisations and polynomial projectors} \label{subsection:projections-stabilisations}
%---------------
Consider the splitting
\[
(\nabla u,\nabla v) = \sum_{\E \in \taun} (\nabla u, \nabla v)_{\E}.
\]
On each element~$\E$, we introduce a bilinear form~$\SE(\cdot,\cdot)$ satisfying three properties:
(\emph{i}) $\SE(1,1)>0$;
(\emph{ii}) $\SE(\cdot,\cdot)$ is computable via the degrees of freedom of~$\VhE$;
(\emph{iii}) there exist $0 < \alpha_* < \alpha^*$ such that
\[
\alpha_* \SemiNorm{\vh}{1,\E}^2
\le \SE(\vh, \vh)
\le 
\alpha^* \SemiNorm{\vh}{1,\E}^2
\qquad\qquad\qquad
\forall \vh \in \VhE,\quad (\vh,1)_\E =0.
\]
The degrees of freedom of~$\VhE$ allow for the computation of projection operators onto polynomial spaces~\cite{Beirao-Brezzi-Cangiani-Manzini-Marini-Russo:2013}.
We define~$\Pinablap:\VhE \to \Pbb_\p(\E)$ as
\begin{equation} \label{Pinabla}
(\nabla(\vh-\Pinablap \vh),\nabla \qp) = 0,
\quad
\SE(\vh- \Pinablap \vh,1) = 0
\qquad\qquad \forall \vh \in \VhE,\; \qp \in \Pbb_\p(\E).
\end{equation}
The second condition above fixes constants
due to the assumption~$\SE(1,1)>0$.
Differently from the standard virtual element setting,
we fix the average of~$\Pinablap \vh$
through the stabilisation~$\SE(\cdot,\cdot)$.
This will be crucial while rewriting the method
as a generalised gradient method
in Section~\ref{subsection:gg-rewriting-of-VEM} below.

We define~$\Piz:\VhE \to \Pbb_{\p-2}(\E)$ as
\begin{equation} \label{Piz}
(\vh - \Piz \vh, \qpmt)_{\E} = 0
\qquad\qquad \forall \vh \in \VhE,\; \qpmt \in\Pbb_{\p-2}(\E).
\end{equation}
In what follows, with an abuse of notation,
we introduce global projection operators~$\Pinablap$ and~$\Piz$
onto piecewise discontinuous polynomial spaces over~$\taun$
so that their restrictions to each element
coincide with the operators in~\eqref{Pinabla}
and~\eqref{Piz}, respectively.

We introduce the local and global bilinear forms
\[
\begin{split}
& \ahE(\phi_h,v_h) :=
(\nabla(\Pinablap \phi_h),\nabla(\Pinablap v_h))
+ \SE( (I-\Pinablap)\phi_h, (I-\Pinablap)v_h),\\
& \ah(\phi_h,v_h):= \sum_{\E \in \taun} \ahE(\phi_h,v_h)
\qquad\qquad \forall \phi_h, v_h \in \Vh.
\end{split}
\]
In particular, the global discrete bilinear form~$\ah(\cdot,\cdot)$
is coercive and continuous with respect to the energy norm
with constants~$\min(1,\alpha_*)$ and~$\max(1,\alpha^*)$,
respectively.

\begin{remark} \label{remark:explicit-stabilisations}
Several explicit choices for the stabilisation~$\SE(\cdot,\cdot)$ are available
in the literature~\cite{Mascotto:2023}.
Amongst others,
we mention the so-called ``dofi-dofi'' stabilisation
given by the $\ell^2$ product of the degrees of freedom,
and the ``projected'' stabilisation
\begin{equation} \label{projected-stab}
\SE(\phi_h,v_h) := \hE^{-2} (\Piz \phi_h, \Piz v_h)_{\E}
                 + \hE^{-1} (\phi_h, v_h)_{\partial \E} .
\end{equation}
At any rate, the forthcoming analysis is fairly general and works for any choice of~$\SE(\cdot,\cdot)$
satisfying the three properties above.
\end{remark}

%---------------
\subsection{The standard formulation of the VEM}
\label{subsection:VEM}
%---------------
We only consider the case where $\p$ is larger than or equal to~$2$.
The case~$\p$ equal to~$1$
can be analogously handled with modifications
as in~\cite{Beirao-Brezzi-Cangiani-Manzini-Marini-Russo:2013}.
Then, a virtual element method for the approximation of the solution
to~\eqref{weak-formulation} reads
\begin{equation} \label{VEM}
    \begin{cases}
        \text{find } \uh \in \Vh \text{ such that}\\
        \ah (\uh,\vh) = (f,\Piz \vh)_{\Omega} \qquad \forall \vh \in \Vh.
    \end{cases}
\end{equation}
Method~\eqref{VEM} is well posed due to the continuity and coercivity of~$\ah(\cdot,\cdot)$,
and the continuity of~$(f,\Piz \cdot)_{\Omega}$.
In what follows, $\uh$ in~$\Vh$ will always denote the solution to~\eqref{VEM}.

To simplify the presentation, we henceforth assume that~$f$ belongs to~$\Pbb_{\p-2}(\taun)$.
This allows us to remove high-order data oscillation terms in the forthcoming bounds.
The analysis of the general case poses no extra difficulties but would result in a more
cumbersome notation.
In particular, the a posteriori error estimates
in Theorem~\ref{theorem:reliability-efficiency} below
would contain extra, higher order terms of the form
\[
\sum_{\E\in\taun} \hE^2 \Norm{f-\Piz f}{0,\E}^2
\]
for the upper bound, and a similar sum over the elements
on certain patches for the lower bound.

%---------------
\subsection{The VEM as a generalised gradient method}
\label{subsection:gg-rewriting-of-VEM}
%---------------
We rewrite method~\eqref{VEM} as a generalised gradient method.
In other words, we describe how to design a suitable, fully-computable
operator $\Gfrakh: V_h \to \bL^2(\Omega)$ such that
\begin{equation}
\label{eq:generalised-gradient}
\ah(\phi_h,v_h) = ( \Gfrakh(\phi_h), \nabla v_h)_{\Omega}
\qquad\qquad\qquad\qquad
\forall \phi_h, v_h \in \Vh.
\end{equation}
We propose a construction that is fully-localised to each element, so that
$\Gfrakh(\phi_h)_{|\E}$ belongs to $\RT_\p(\tautildeE)$
and~\eqref{eq:generalised-gradient}
holds true element-wise.

A key ingredient of the generalised gradient reconstruction
is a ``stabilisation lifting'': for the explicit stabilisation
\begin{equation*}
\SE(\cdot,\cdot) := \hE^{-2} (\Piz\cdot, \Piz\cdot)_{\E} + \hE^{-1} (\cdot,\cdot)_{\partial \E},
\end{equation*}
we can write
\begin{equation} \label{write-explicit-stab}
\SE((I-\Pinablap)\phi_h,v_h)
= (\mu_h^K(\phi_h),v_h)_{\partial \E}-(r_h^K(\phi_h),v_h)_\E,
\end{equation}
where we have set
\begin{equation*}
\mu_h^K(\phi_h) 
:= \hE^{-1} (I-\Pinablap) \phi_h{}_{|\partial\E}
            \in \Pbb_{\p}(\EcalE),
\quad\quad
r_h^K(\phi_h) := -\hE^{-2} \Piz(I-\Pinablap) \phi_h \in \Pbb_{\p-2}(\E).
\end{equation*}
We show that it is \emph{always}
possible to rewrite the stabilisation as in~\eqref{write-explicit-stab},
and that the data~$\mu_h^K(\phi_h)$ and~$r_h^K(\phi_h)$
can be found by solving a local linear system.

\begin{lem}[Stabilisation lifting]
\label{lem:stab_lift}
For all $\phi_h$ in~$\VhE$, there exist unique~$\mu_h^K(\phi_h)$ in~$\Pbb_{\p}(\EcalE)$
and~$r_h^K(\phi_h)$ in~$\Pbb_{\p-2}(\E)$ such that
\begin{equation} \label{identity:stab-lifting}
(\mu_h^K(\phi_\h), \vh)_{\partial \E}
    -(r_h^K(\phi_\h),\vh)_\E
= \SE((I-\Pinablap) \phi_\h,\vh)
    \qquad\qquad \forall \vh \in \VhE.
\end{equation}
\end{lem}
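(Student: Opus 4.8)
The plan is to exhibit the pair $(\mu_h^K(v_h), r_h^K(v_h))$ as the solution of a finite-dimensional linear system obtained by testing the right-hand side of~\eqref{identity:stab-lifting} against a suitable basis, and then to argue that this system is square with trivial kernel. First I would fix $v_h$ in $\VhE$ and regard the left-hand side $\phi_h \mapsto \SE((I-\Pinablap)v_h, \phi_h)$ as a linear functional on $\VhE$; by property~(ii) of $\SE(\cdot,\cdot)$ it is computable, hence determined entirely by the degrees of freedom of $\phi_h$. The point is that the target expression $(\mu, \phi_h)_{\partial\E} - (r, \phi_h)_\E$ with $\mu \in \Pbb_\p(\EcalE)$ and $r \in \Pbb_{\p-2}(\E)$ also depends on $\phi_h$ only through those same degrees of freedom: the boundary term $(\mu,\phi_h)_{\partial\E}$ is an edgewise integral of a degree-$\p$ polynomial against $\phi_h|_\e \in \Pbb_\p(\e)$, which is reconstructible from the vertex and edge (Gauss--Lobatto) values of $\phi_h$, and the bulk term $(r, \phi_h)_\E$ is reconstructible from the scaled moments of $\phi_h$ against $\Pbb_{\p-2}(\E)$. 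So both sides of~\eqref{identity:stab-lifting} are well-defined linear functionals of the dof-vector of $\phi_h$.

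Next I would count dimensions to see that the matching is exact. The dimension of $\VhE$ equals $\#\VcalE + (\p-1)\#\EcalE + \dim\Pbb_{\p-2}(\E)$. On the other side, $\Pbb_\p(\EcalE)$ — continuity is \emph{not} imposed here, so this is the full edgewise space — has dimension $(\p+1)\#\EcalE$, which together with $\dim\Pbb_{\p-2}(\E)$ for $r$ does \emph{not} match unless one is careful. The clean way is to observe that a degree-$\p$ polynomial on an edge $\e$ is uniquely determined by its $\p+1$ values at the two endpoints and the $\p-1$ interior Gauss--Lobatto nodes; hence prescribing, for every edge $\e$ and every Gauss--Lobatto node $x$ on $\e$ (endpoints included), the value of the functional $\phi_h \mapsto \SE((I-\Pinablap)v_h,\phi_h)$ on the Lagrange basis function associated to $(e,x)$, together with prescribing $(r,\phi_h)_\E$ against a basis of $\Pbb_{\p-2}(\E)$-moment dofs, sets up a \emph{diagonal}, hence uniquely solvable, system. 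Concretely: choose $\mu_h^K(v_h)|_\e$ to be the edgewise interpolant whose value at each Gauss--Lobatto node equals $\he^{-1}$ times the corresponding dof-component of the functional (so that $(\mu_h^K(v_h),\phi_h)_\e$ reproduces the nodal-quadrature contribution), and choose $r_h^K(v_h)$ via the $L^2(\E)$-Riesz representation of the remaining bulk part of the functional in $\Pbb_{\p-2}(\E)$. Uniqueness follows because if $(\mu_h^K(v_h),\phi_h)_{\partial\E} = (r_h^K(v_h),\phi_h)_\E$ for all $\phi_h \in \VhE$, then testing against $\phi_h$ supported at a single boundary dof forces $\mu_h^K(v_h) = 0$ edgewise (the boundary Gram matrix against the Lagrange basis is, up to the nodal-quadrature choice, invertible), and then testing against the bulk-moment dofs forces $r_h^K(v_h) = 0$.

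The main obstacle is the mismatch between $(\cdot,\cdot)_{\partial\E}$ being an $L^2$ pairing and the natural pairing being through dofs: one must commit to a specific quadrature-consistent identification so that the resulting Gram matrix on the boundary is genuinely invertible (equivalently, that the Gauss--Lobatto nodal quadrature is exact enough, which for $\p$ nodes it is on $\Pbb_\p \cdot \Pbb_{\p-2}$ but \emph{not} on $\Pbb_\p \cdot \Pbb_\p$ — this is exactly why $r_h^K$ must absorb the ``quadrature defect''). I would handle this by first peeling off from $\SE((I-\Pinablap)v_h,\phi_h)$ its bulk-moment content into $r_h^K$, reducing to a functional that depends on $\phi_h$ only through boundary dofs, and only then invert the (now small, edge-local, invertible) boundary system to get $\mu_h^K$. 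The explicit formulas displayed before the lemma — $\mu_h^K(\phi_h) = \hE^{-1}\phi_h$, $r_h^K(\phi_h) = -\hE^{-2}\Piz(I-\Pinablap)\phi_h$ — serve as a sanity check that this decomposition is consistent in the special case of the projected stabilisation.
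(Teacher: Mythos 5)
There is a genuine gap, and it originates in the reading of the space $\Pbb_\p(\EcalE)$. In this lemma (and the paper makes this explicit in its proof), $\Pbb_\p(\EcalE)$ is the space of \emph{continuous} piecewise polynomials of degree at most $\p$ on $\partial\E$, i.e.\ exactly the trace space of $\VhE$ on $\partial\E$. With that reading, $\dim\bigl(\Pbb_\p(\EcalE)\times\Pbb_{\p-2}(\E)\bigr)=\p\,\#\VcalE+\dim\Pbb_{\p-2}(\E)=\dim\VhE$, so the dimension count you worried about is exact, and the proof is the straightforward one: the map $(\mu,r)\mapsto\bigl[(\mu,\cdot)_{\partial\E}-(r,\cdot)_\E\bigr]$ into the dual of $\VhE$ is a square linear map which is injective (test with $\phi_h$ having zero boundary trace and internal moments matching $r$ to force $r=0$; then $\mu$ is $L^2(\partial\E)$-orthogonal to all traces of $\VhE$, which exhaust the continuous piecewise polynomials, so $\mu=0$), hence bijective; in practice one solves the square system with matrix $\hE^{-1}(q_j^{\partial\E},\varphi_i)_{\partial\E}+\hE^{-2}(q_j^{\E},\varphi_i)_{\E}$, which is what the paper does. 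Note that with your discontinuous reading the lemma as stated would actually be \emph{false}: any $\mu$ orthogonal in $L^2(\partial\E)$ to the (smaller) continuous trace space could be added without changing the right-hand side of \eqref{identity:stab-lifting}, so uniqueness is unrecoverable, and no choice of basis can fix that.

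Your attempted repair does not close this gap. You replace the exact pairing $(\mu,\phi_h)_{\partial\E}$ by a Gauss--Lobatto nodal identification and claim the resulting quadrature defect can be ``absorbed'' by $r_h^K$. It cannot: the defect is a linear functional of $\phi_h$ that depends only on the boundary degrees of freedom of $\phi_h$, whereas $(r_h^K,\phi_h)_\E$ with $r_h^K\in\Pbb_{\p-2}(\E)$ depends only on the internal moment degrees of freedom; since these two groups of degrees of freedom are independent coordinates of $\VhE$, a bulk term can never represent a nonzero boundary-dof functional. Consequently the ``diagonal'' construction you describe produces a pair that satisfies a quadrature version of \eqref{identity:stab-lifting}, not the identity with the exact $L^2(\partial\E)$ and $L^2(\E)$ pairings that the lemma (and its later use, where $\mu_h^K$ and $r_h^K$ become the normal trace and divergence data of $\thetaboldh^K$ in \eqref{def:thetabold}) requires. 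The first part of your plan -- both sides are functionals of the degrees of freedom, and the explicit formulas for the projected stabilisation serve as a consistency check -- is fine and matches the paper; the missing idea is simply to take the boundary space to be the continuous trace space so that the exact-pairing system is square and nonsingular.
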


\begin{proof}
A practical construction of~$(r_\h^\E,\mu_\h^\E)$ is as follows.
Consider the space~$\Pbb_\p(\EcalE)$
of piecewise continuous polynomials
of maximum degree~$\p$ over~$\partial\E$.
Define $\XpE:=\Pbb_\p(\EcalE) \times \Pbb_{\p-2}(\E)$
and
\[
\Lambda : \XpE \times \VhE  \to \Rbb,
\qquad\qquad\qquad
\Lambda(\xp,\vh)
:= (\xp^{\partial\E}, \vh)_{\partial\E}
   - (\xp^{\E}, \vh)_{\E}.
\]
Consider the canonical basis $\{ \varphi_j \}_{j=1}^{\dim(\VhE)}$ of~$\VhE$,
which is dual to the degrees of freedom in Section~\ref{subsection:VE-spaces}.
This basis, and a (polynomial) basis $\{ \qell \}_{\ell=1}^{\dim(\VhE)}$,
$\qell=(\qell^{\partial\E}, \qell^\E)$,
of~$\XpE$ have the same cardinality.
In fact, the numbers of boundary and bulk degrees of freedom
detailed in Section~\ref{subsection:VE-spaces}
match the dimensions of the spaces $\Pbb_\p(\EcalE)$ and $\Pbb_{\p-2}(\E)$,
respectively.
In particular, there exists a natural bijection between
the spaces~$\XpE$ and~$\VhE$,
which we denote with an abuse of notation by $\phi_\h(\xp)$.

We have to prove that there exists a unique element~$\xp$ in~$\XpE$
such that
\[
\Lambda(\xp,\vh)
= \SE((I-\Pinablap) \phi_\h(\xp), \vh)
    \qquad\qquad\qquad \forall \vh \in \VhE.
\]
It suffices to solve the square system
with coefficient matrix and right-hand side given by
\small{\[
\mathbf A_{i,j}
:= (q_j^{\partial\E},\varphi_i)_{\partial \E} 
    - (q_j^\E,\varphi_i)_{\E},
\quad 
\mathbf b_{i} := \SE((I-\Pinablap)\phi_h, \varphi_{i})
    \qquad \forall i,j=1,\dots,\dim(\VhE).
\]}\normalsize
The system is well posed. In fact, given any~$y_\p$ in $\XpE$,
there exists a unique~$\vh$ in~$\VhE$
such that the trace of~$\vh$ on $\partial\E$
is $y_\p^{\partial\E}$
and $\Piz\vh$ equals $-y_\p^\E$.
This implies that if we are given~$\xp$ in $\XpE$
such that $\Lambda(\xp, w_\h) = 0$ for all~$w_\h$ in~$\VhE$,
then $\xp$ is zero. To see this, we write
\[
0 = \Lambda(\xp,\vh)
  = \Norm{\xp^{\partial\E}}{\partial\E}^2
    + \Norm{\xp^\E}{\E}^2.
\]
We proved that the kernel of $\Lambda$ is zero.
Such a kernel coincides with that of the square matrix $\mathbf A$,
which is consequently invertible.
We find~$r_\h^K$ and~$\mu_h^K$ by considering
linear combinations with respect to the polynomial basis~$\qell$
and coefficients given by the solution to the above linear system.
\end{proof}

Next, we introduce the generalised gradient operator.
\begin{defn}[Generalised gradient]
\label{defn:gg}
For all $K$ in~$\taun$ and $\phi_h$ in~$\VhE$, we define
\begin{equation} \label{generalised-gradient}
\GfrakhE(\phi_h)
:= \nabla(\Pinablap \phi_h-\Scalh^K(\phi_h)) + \thetaboldh^K(\phi_h)
\in \RT_\p(\tautildeE) ,
\end{equation}
where $\Scalh^K(\phi_h)$ in~$\Pbb_\p(K)$ satisfies
\begin{equation} \label{definition:Scalh}
(\nabla \Scalh^K(\phi_h),\nabla q_h)_K = \SE((I-\Pinablap) \phi_h,q_h)
\qquad\qquad \forall q_h \in \Pbb_\p(K)
\end{equation}
and
\footnote{The computation of~$\thetaboldh$ below is equivalent to
solving a local mixed finite element method on the element~$\E$.}
\begin{equation}  \label{def:thetabold}
\thetaboldh^K(\phi_h)
:= \arg \min_{\substack{
\btau_h \in \RT_{\p+1}(\tautildeE) \cap \bH(\div,\E) \\
 \div \btau_h = r_h^K(\phi_h) \text{ in $\E$} \\
\btau_h \cdot \bn = \mu_h^K(\phi_h) \text{ on $\partial\E$}}}
\Norm{\btau_h}{\E}.
\end{equation}
If $\phi_h$ belongs to~$\Vh$,
then we define $\Gfrakh(\phi_h)$ in $\RT_{\p}(\tautilden)$
by setting $\Gfrakh(\phi_h)_{|\E}
:= \GfrakhE(\phi_h{}_{|\E})$.
\end{defn}

The compatibility conditions for the definition
of~$\thetaboldh^K$ in~\eqref{def:thetabold}
are guaranteed by the way we fix the constant part of~$\Pinablap$
in~\eqref{Pinabla} and identity~\eqref{identity:stab-lifting}.

The operator~$\Gfrakh$ introduced in~\eqref{generalised-gradient}
allows for an equivalent rewriting of the VEM.

\begin{thm}[Rewriting of the VEM]
\label{thm:vem-rewriting}
Let $\E$ in~$\taun$.
For all $\phi_h$ and~$\vh$ in~$\VhE$, we have
\[
\ahE(\phi_h,v_h) = (\GfrakhE(\phi_h),\nabla v_h)_\E .
\]
Similarly, if $\phi_h$ and~$v_h$ belong to~$V_h$, then
\[
\ah(\phi_h,v_h) = (\Gfrakh(\phi_h),\nabla v_h)_\Omega .
\]
\end{thm}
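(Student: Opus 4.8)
The plan is to prove the element-wise identity first and then obtain the global one by summation. Fix $\E$ in $\taun$ and $\phi_h,\vh$ in $\VhE$. I would start from Definition~\ref{defn:gg}, inserting~\eqref{generalised-gradient} into $(\GfrakhE(\phi_h),\nabla \vh)_\E$ and splitting it into the three contributions $(\nabla \Pinablap \phi_h,\nabla \vh)_\E$, $-(\nabla \Scalh^K(\phi_h),\nabla \vh)_\E$, and $(\thetaboldh^K(\phi_h),\nabla \vh)_\E$. The point is that, although $\vh$ is only virtual, each of these can be rewritten in terms of quantities occurring in $\ahE(\phi_h,\vh)$.

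For the first two contributions I would use the defining orthogonality of $\Pinablap$ in~\eqref{Pinabla}: since $\Pinablap \phi_h$ and $\Scalh^K(\phi_h)$ both lie in $\Pbb_\p(\E)$, testing the first relation in~\eqref{Pinabla} against them gives $(\nabla \Pinablap \phi_h,\nabla \vh)_\E=(\nabla \Pinablap \phi_h,\nabla \Pinablap \vh)_\E$ and $(\nabla \Scalh^K(\phi_h),\nabla \vh)_\E=(\nabla \Scalh^K(\phi_h),\nabla \Pinablap \vh)_\E$; the latter equals $\SE((I-\Pinablap)\phi_h,\Pinablap \vh)$ by the definition of $\Scalh^K$, since $\Pinablap \vh\in\Pbb_\p(\E)$. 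For the third contribution I would integrate by parts, which is legitimate because $\thetaboldh^K(\phi_h)\in\bH(\div,\E)$ and $\vh\in H^1(\E)$; using $\div\thetaboldh^K(\phi_h)=r_h^K(\phi_h)$ and $\thetaboldh^K(\phi_h)\cdot\bn=\mu_h^K(\phi_h)$ yields $(\thetaboldh^K(\phi_h),\nabla \vh)_\E=(\mu_h^K(\phi_h),\vh)_{\partial\E}-(r_h^K(\phi_h),\vh)_\E$, which by Lemma~\ref{lem:stab_lift} applied with the test function $\vh\in\VhE$ is exactly $\SE((I-\Pinablap)\phi_h,\vh)$.

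Collecting the three contributions gives $(\GfrakhE(\phi_h),\nabla \vh)_\E=(\nabla \Pinablap \phi_h,\nabla \Pinablap \vh)_\E+\SE((I-\Pinablap)\phi_h,\vh-\Pinablap \vh)$, and since $\vh-\Pinablap \vh=(I-\Pinablap)\vh$ this is precisely $\ahE(\phi_h,\vh)$, proving the local statement. The global identity then follows by summation: by Definition~\ref{defn:gg}, $\Gfrakh(\phi_h)$ restricts to $\GfrakhE(\phi_h|_\E)$ on each $\E$, so $(\Gfrakh(\phi_h),\nabla \vh)_\Omega=\sum_{\E\in\taun}(\GfrakhE(\phi_h|_\E),\nabla \vh|_\E)_\E=\sum_{\E\in\taun}\ahE(\phi_h,\vh)=\ah(\phi_h,\vh)$, where $\nabla \vh$ is a well-defined element of $\bL^2(\Omega)$ because $\vh\in\Vh\subset H^1(\Omega)$.

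The computation is essentially routine; the only delicate points to watch are that $\nabla \vh$ must be replaced by $\nabla \Pinablap \vh$ \emph{before} invoking the defining relations of $\Scalh^K$ and $\Pinablap$, which only test against polynomials, and that the sign in front of $\Scalh^K(\phi_h)$ in~\eqref{generalised-gradient} must be tracked carefully so that the two stabilisation terms combine into $\SE((I-\Pinablap)\phi_h,(I-\Pinablap)\vh)$ rather than cancelling. It is also worth recording in passing that the feasible set of the minimisation~\eqref{def:thetabold} is nonempty: this is the compatibility condition noted after Definition~\ref{defn:gg}, obtained by testing Lemma~\ref{lem:stab_lift} with the constant function and using the normalisation $\SE((I-\Pinablap)\phi_h,1)=0$ built into~\eqref{Pinabla}.
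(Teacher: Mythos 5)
Your proposal is correct and follows essentially the same argument as the paper's proof: the same three-way decomposition, the orthogonality in~\eqref{Pinabla}, the definition of $\Scalh^K$, Lemma~\ref{lem:stab_lift}, and the divergence/normal-trace constraints of $\thetaboldh^K$, merely run in the reverse direction (starting from $(\GfrakhE(\phi_h),\nabla \vh)_\E$ rather than from $\ahE(\phi_h,\vh)$). Your closing remark on the nonemptiness of the feasible set in~\eqref{def:thetabold} matches the compatibility observation the paper makes right after Definition~\ref{defn:gg}.
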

%%%
\begin{proof}
Let $\phi_h$ and~$\vh$ be in~$\VhE$.
We split the local discrete bilinear form~$\ahE(\cdot,\cdot)$
into three terms:
\begin{equation*}
\ahE(\phi_h,v_h)
=
(\nabla(\Pinablap \phi_h),\nabla(\Pinablap v_h))_\E
+
\SE( (I-\Pinablap) \phi_h, v_h )
-
\SE( (I-\Pinablap) \phi_h, \Pinablap v_h ).
\end{equation*}
Since $\Pinablap$ is an orthogonal projection
and using~\eqref{Pinabla} with $\qp = \Pinablap\phi_\h$,
we have
\begin{equation*}
(\nabla(\Pinablap \phi_h),\nabla(\Pinablap v_h))_\E
=
(\nabla(\Pinablap \phi_h),\nabla v_h)_\E
\end{equation*}
and, by the definition of $\Scalh := \Scalh^K(\phi_h)$
in~\eqref{definition:Scalh},
\begin{equation*}
\SE( (I-\Pinablap) \phi_h, \Pinablap v_h )
=
(\nabla \Scalh,\nabla(\Pinablap v_h))_\E
=
(\nabla \Scalh,\nabla v_h)_\E.
\end{equation*}
This leads us to
\begin{equation*}
\ahE(\phi_h,v_h)
=
(\nabla(\Pinablap \phi_h-\Scalh),\nabla v_h)_\E
+
\SE( (I-\Pinablap) \phi_h, v_h ).
\end{equation*}
For the remaining term, we introduce $\thetaboldh := \thetaboldh^K(\phi_h)$
and invoke Lemma \ref{lem:stab_lift}, giving
\begin{equation} \label{why-RT?}
\begin{split}
\SE( (I-\Pinablap) \phi_h, v_h )
& = -(r_h^K(\phi_h),v_h)_K + (\mu_h^K(\phi_h),v_h)_{\partial K} \\
& = -(\div \thetaboldh,v_h)_K + (\thetaboldh \cdot \bn,v_h)_{\partial K}
= (\thetaboldh,\nabla v_h)_K,
\end{split}
\end{equation}
thereby concluding the proof
by picking~$\thetaboldh$ as that minimizing the $L^2$ norm
over all possible Raviart-Thomas functions
satisfying the constraints in~\eqref{why-RT?}.
\end{proof}

The use of Raviart-Thomas elements over the subtriangulation
is crucial in the construction of~$\thetaboldh$, see~\eqref{why-RT?},
notably in incorporating information
on the stabilisation in the generalised gradient.

%----------------------------------
\section{Technical results} \label{section:technical-results}
%----------------------------------
The aim of this section is to discuss some technical results,
which are useful in the a posteriori error analysis in Sections~\ref{section:eq:prager_synge},
\ref{section:flux}, and~\ref{section:potential} below.
In Section~\ref{subsection:partition-of-unity}, we introduce a virtual partition of unity and analyse its properties.
In Section~\ref{subsection:discrete-stable-minimisation},
we investigate stability properties of certain minimisation problems.

%---------------
\subsection{Virtual element partition of unity} \label{subsection:partition-of-unity}
%---------------
We introduce a virtual element partition of unity over the mesh~$\taun$,
which we shall employ below in the localisation
of the computation of the error estimator.
For each vertex~$\nu$ of~$\Vcaln$, we define~$\phinu$
as the only function in~$\Vh$
that is harmonic in each element,
piecewise linear on the skeleton of~$\taun$,
equal to~$1$ at the vertex~$\nu$,
and equal to~$0$ at all other vertices.
This set of functions is a partition of unity;
see, e.g., \cite[Section~2.1]{Perugia-Pietra-Russo:2016}.

For $\nu$ in~$\Vcaln$, we denote the set of $\E$
in~$\taun$ sharing the vertex $\nu$ by $\taunnu$.
Then, 
\begin{equation} \label{vertex-patch}
\omeganu := \operatorname{supp} \phinu
\end{equation}
corresponds to the set covered by the elements in $\taunnu$.
We denote the set of facets sharing the vertex $\nu$ by~$\Ecalnu \subset \Ecaln$
and, if~$\nu$ is a boundary vertex,
its subset of (two) facets lying on the boundary $\partial \Omega$ by~$\EcalnuB$.

\begin{subequations}
The space $\Htildeonenu \subset H^1(\omeganu)$
defined for~$\nu$ in~$\VcalnI$ as
\begin{equation} \label{eq:defn_Htildeonenu}
\Htildeonenu
:= \left \{ v \in H^1(\omeganu) \; | \; (v,1)_{\omeganu} = 0 \right \}
\end{equation}
and for $\nu$ in~$\VcalnB$ as
\begin{equation}
\Htildeonenu
:= 
\left \{ v \in H^1(\omeganu) \; | \; v_{|\e} = 0 \quad  
                \forall e \in \EcalnuB \right \}
\end{equation}
will be useful later on, as well as 
\end{subequations}
the broken counterpart defined by $\widetilde H^1(\taunnu) := H^1(\taunnu)$
and $\widetilde H^1(\taunnu) := \{ v \in H^1(\taunnu) \; | \;
(v,1)_{\omeganu} = 0 \}$
for boundary and interior vertices, respectively.

We introduce the average operator over any facet~$\e$ in~$\Ecaln$:
\begin{equation*}
\Pizez v = \he^{-1}(v,1)_{\e} 
\qquad\qquad
\forall v \in L^1(\e).
\end{equation*}
Similarly,
given~$\nbfE$ the outward unit normal vector to an element~$\E$
and~$\nbfe$ one of the two normal vectors to a facet~$\e$,
we define the jump operator
\[
\jump{\theta}:=
\begin{cases}
\theta_{|\E_1} (\nbf_{\E_1} \cdot \nbfe)
    + \theta_{|\E_2} (\nbf_{\E_2} \cdot \nbfe) 
    & \text{if } \e\in\EcalnI,\; \e \subseteq \partial \E_1 \cap \partial \E_2 \\
\theta_{|\E} (\nbf_{|\E} \cdot \nbfe)
    & \text{if } \e\in\EcalnB,\; \e \subseteq \partial \E\\
\end{cases}
\qquad\quad \forall  \theta \in \widetilde H^1(\taunnu).
\]

\begin{remark} \label{remark:things-simplices-DG}
In the case of simplicial meshes with finite element and discontinuous Galerkin methods~\cite{Ern-Vohralik:2015},
the standard partition of unity given
by hat functions~$\phinu$ satisfies some standard properties.
More precisely, the scaling properties
\begin{equation} \label{PU-FEM-1}
\|\phinu\|_{L^\infty(\Omega)} = 1,
\qquad\qquad
\|\nabla \phinu\|_{L^\infty(K)} \leq C(\gamma) h_K^{-1} \quad \forall K \in \taun ,
\end{equation}
combined with the product rule
and a broken Poincar\'e inequality as in, e.g., \cite{Brenner:2003},
yield the inequality
\begin{equation} \label{PU-FEM-2}
\|\nabla_h(\phinu\theta)\|_{\omeganu}^2
\le
\Ccontnu^2
(\|\nabla_h\theta\|_{\omeganu}^2
+
\sum_{e \in \Ecalnu} \he^{-1}\|\Pizez\llbracket \theta \rrbracket\|_e^2)
\qquad\qquad
\forall \theta \in \widetilde H^1(\taunnu),
\end{equation}
where the constant $\Ccontnu$ only depends
on the regularity parameter~$\gamma$
in Section~\ref{subsection:meshes}.
\end{remark}

We establish corresponding properties as those in Remark~\ref{remark:comparison_DG} for the case of polytopic meshes.
We have the following result generalising~\eqref{PU-FEM-1}.

\begin{lem} \label{lemma:partition-of-unity1}
For every~$\nu$ in~$\Vcaln$,
the partition of unity function~$\phinu$ satisfies
\begin{equation} \label{identity&bound:partition-of-unity}
\Norm{\phinu}{L^\infty(\Omega)} = 1,
\qquad\qquad
\Norm{\nabla\phinu}{L^4(\E)}
\le C(\gamma) \hE^{-\frac12} 
\qquad \forall \E \in \taun.
\end{equation}
\end{lem}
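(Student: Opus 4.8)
The first identity is immediate: $\phinu$ is piecewise harmonic with boundary values that are piecewise linear on the skeleton and take the value $1$ at $\nu$ and $0$ at all other vertices; the maximum principle applied element by element gives $0 \le \phinu \le 1$ with the value $1$ attained at $\nu$, hence $\Norm{\phinu}{L^\infty(\Omega)}=1$. The content is therefore entirely in the gradient bound, and the plan is to prove it on a fixed element $\E$ of $\taunnu$ (on elements not touching $\nu$, $\phinu$ vanishes identically on $\partial\E$, so $\phinu \equiv 0$ there and the bound is trivial).

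For the gradient estimate, first I would reduce to a scale-invariant statement by a standard affine dilation argument. Writing $\widehat\E := \hE^{-1}(\E - \xbfE)$, the pullback $\widehat\phinu$ of $\phinu$ satisfies $\Norm{\nabla\phinu}{L^4(\E)} = \hE^{-1}\hE^{1/2}\Norm{\nabla\widehat\phinu}{L^4(\widehat\E)} = \hE^{-1/2}\Norm{\nabla\widehat\phinu}{L^4(\widehat\E)}$ (the $2$-dimensional $L^4$ Jacobian factor is $\hE^{2/4}=\hE^{1/2}$), so it suffices to bound $\Norm{\nabla\widehat\phinu}{L^4(\widehat\E)}$ by a constant depending only on $\gamma$. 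On the reference configuration, $\widehat\phinu$ is harmonic in $\widehat\E$, with piecewise-linear boundary data $g$ on $\partial\widehat\E$ satisfying $\Norm{g}{L^\infty(\partial\widehat\E)}\le 1$ and, by the edge-length lower bound $\he \ge \gamma\hE$, a Lipschitz bound $\Norm{g'}{L^\infty(\e)} \le C(\gamma)$ on each edge (the slope of a linear function that changes by at most $1$ over an edge of length at least $\gamma$). I would then invoke interior-plus-global elliptic regularity for the Laplacian on the shape-regular domain $\widehat\E$: since $\widehat\E$ is star-shaped with respect to a ball of radius $\gamma$ and has a uniformly bounded number of edges of length comparable to $1$, and the Dirichlet data lies in $W^{1,\infty}(\partial\widehat\E) \hookrightarrow H^{1}(\partial\widehat\E)$ (equivalently $H^{1/2+s}$ for small $s$) with norm controlled by $C(\gamma)$, one gets $\widehat\phinu \in H^{1+s}(\widehat\E)$ with $\Norm{\widehat\phinu}{H^{1+s}(\widehat\E)} \le C(\gamma)$ for some $s>0$; Sobolev embedding in two dimensions then gives $\nabla\widehat\phinu \in L^{q}(\widehat\E)$ for a range of $q$ including $q=4$, with the embedding constant depending only on $\gamma$ through the shape regularity of $\widehat\E$. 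Tracking constants back through the dilation yields~\eqref{identity&bound:partition-of-unity}.

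The main obstacle is making the elliptic-regularity step genuinely uniform in the (infinitely many, but shape-regular) polygonal shapes $\widehat\E$: one needs a regularity estimate on nonconvex polygons whose constant depends only on $\gamma$, not on the specific corner angles, which is delicate because corner singularities of the Laplacian on a reentrant polygon generically live only in $H^{1+s}$ with $s = \pi/\theta_{\max} - 1$ degrading as the largest interior angle $\theta_{\max}\to 2\pi$. The clean way around this — which I expect the authors to use — is to choose the exponent $4$ precisely because $H^{1+s}(\widehat\E) \hookrightarrow W^{1,4}(\widehat\E)$ already for any $s > 1/2$, while the worst reentrant corner on a polygon with interior angles bounded away from $2\pi$ (as forced by star-shapedness with respect to a ball of radius $\gamma$, which caps $\theta_{\max}$ by something like $2\pi - c(\gamma)$) still yields $s$ bounded below by a positive constant $s(\gamma) > 1/2$ only if $\gamma$ is large enough; if not, one instead argues directly: the harmonic extension of piecewise-linear data can be split into a $W^{1,\infty}$ part (the harmonic extension of a global Lipschitz extension of $g$, bounded via maximum principle applied to $\partial_i\widehat\phinu$ type arguments or via the Widman-type gradient estimate for harmonic functions with Lipschitz data) plus a correction handled on the ball $B(\widehat\E)$ by interior estimates, so that the only place curvature of the domain enters is through $\gamma$. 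I would present the $W^{1,\infty}(\Omega)$-near-boundary / $W^{1,4}$-global split, citing a suitable reference for the harmonic-extension gradient bound, rather than invoking a black-box polygonal regularity theorem whose constant is hard to control.
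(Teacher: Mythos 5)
Your overall strategy (maximum principle for the $L^\infty$ identity; scaling plus elliptic regularity of the harmonic extension plus a Sobolev embedding into $L^4$ for the gradient bound) is in essence the route the paper takes: the paper applies the scaled embedding $H^{\frac12}(\E)\hookrightarrow L^4(\E)$ directly to $\nabla\phinu$, bounds the resulting right-hand side by $C(\gamma)\hE^{-1}\Norm{\phinu}{\partial\E}$ through standard a priori estimates for the Dirichlet problem together with a lowest-order inverse inequality on $\partial\E$ (this is where the ``no small edges'' assumption enters, playing the role of your Lipschitz bound on the trace), and concludes with $\Norm{\phinu}{L^\infty(\partial\E)}=1$ and $\vert\partial\E\vert^{\frac12}\le C(\gamma)\hE^{\frac12}$. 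However, your treatment of the uniformity issue, which you correctly identify as the crux, contains two genuine problems. First, the claim that $s>\frac12$ is available ``only if $\gamma$ is large enough'' is wrong in a way that matters for how you structure the proof: star-shapedness with respect to a ball of radius $\gamma\hE$ caps every interior angle strictly below $2\pi$ for \emph{every} $\gamma>0$, so the corner exponent $\pi/\theta_{\max}$ exceeds $\frac12$ by a margin depending only on $\gamma$; moreover the critical case $s=\frac12$ already suffices, since $H^{\frac12}\hookrightarrow L^4$ holds in two dimensions and the $H^{3/2}$-type shift estimate for harmonic functions with $H^1(\partial\E)$ Dirichlet data is valid on arbitrary Lipschitz domains with a constant depending only on the Lipschitz character, i.e.\ on $\gamma$. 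So the uniform regularity statement you hesitate to invoke is precisely the correct (and standard) tool, and no case distinction on $\gamma$ is needed.

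Second, and more seriously, the fallback you say you would actually write up is flawed: the harmonic extension of (even globally) Lipschitz boundary data on a nonconvex polygon is in general \emph{not} in $W^{1,\infty}$ up to the boundary; at a reentrant corner of interior angle $\theta>\pi$ the gradient behaves like $r^{\pi/\theta-1}$ and is unbounded, so the proposed ``$W^{1,\infty}$ part plus interior correction'' split collapses exactly on the elements for which the estimate is delicate. (The gradient is still in $L^4$ near such a corner precisely because $\pi/\theta>\frac12$, which is the fractional-regularity statement you were trying to avoid.) The fix is simply to drop the fallback and argue as in the paper: scaled $H^{\frac12}\hookrightarrow L^4$ embedding for $\nabla\phinu$, the Lipschitz-uniform a priori bound in terms of the boundary data, the inverse inequality for the piecewise linear trace (valid since all edges of $\E$ have length at least $\gamma\hE$), and the trivial bounds $\Norm{\phinu}{L^\infty(\partial\E)}=1$, $\vert\partial\E\vert\le C(\gamma)\hE$.
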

\begin{proof}   
The identity in~\eqref{identity&bound:partition-of-unity}
follows from the maximum principle for harmonic functions and the fact that~$\phinu$ is piecewise linear over the skeleton of the mesh,
equal to~$1$ at the vertex~$\nu$,
and~$0$ at all other vertices.

As for the bound in~\eqref{identity&bound:partition-of-unity},
we first recall the scaled continuous Sobolev embedding \cite{Ern-Guermond:2021}
$H^{\frac12}(\E) \hookrightarrow L^4(\E)$ given by
\[
\Norm{\nabla \phinu}{L^4(\E)}
\leq
C(\gamma) \left(
\hE^{-\frac12} \Norm{\nabla \phinu}{\E}
+ \SemiNorm{\nabla \phinu}{\frac12,\E} \right).
\]
Since~$\phinu$ is the solution to a Laplace problem with piecewise continuous linear polynomial Dirichlet boundary conditions,
we use standard a priori estimates for elliptic problems,
a (lowest order) polynomial inverse inequality on~$\partial\E$,
and the fact that the mesh has no ``small facets'',
and deduce
\[
\Norm{\nabla \phinu}{L^4(\E)}
\le C(\gamma) \hE^{-1} \Norm{\phinu}{\partial\E}
\leq C(\gamma) \hE^{-1} \Norm{\phinu}{L^{\infty}(\partial\E)} \vert \partial\E \vert^{\frac12}
= C(\gamma) \hE^{-1} \vert \partial\E \vert^{\frac12}
\le C(\gamma) \hE^{-\frac12}.
\]
\end{proof}

We also have the following result generalising~\eqref{PU-FEM-2},
which follows from the chain rule, the piecewise Sobolev embeddings
$H^1(T) \hookrightarrow L^4(T)$ over the subtriangulation,
Lemma~\ref{lemma:partition-of-unity1},
and a broken Poincar\'e inequality~\cite{Brenner:2003}
on the subtriangulation.

\begin{cor}
For every~$\nu$ in~$\Vcaln$,
there exists a positive constant~$\Ccontnu$ only depending
on the regularity parameter~$\gamma$
in Section~\ref{subsection:meshes} such that
\begin{equation} \label{eq:Ccontnu}
\|\nabla_h(\phinu\theta)\|_{\omeganu}^2
\leq \Ccontnu^2
(\|\nabla_h\theta\|_{\omeganu}^2
+ \sum_{e \in \Ecalnu} \he^{-1} \| \Pizez \llbracket \theta \rrbracket\|_{e}^2)
\qquad\qquad \forall \theta \in \widetilde H^1(\tautildennu).
\end{equation}
\end{cor}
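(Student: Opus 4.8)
The plan is to prove the Corollary by reducing it to an estimate on each subtriangle of $\tautildennu$ and then summing. First I would apply the chain rule on each $T$ in $\tautildennu$: since $\phinu$ is smooth inside each element and $\theta$ is piecewise $H^1$,
\[
\nabla_h(\phinu \theta)|_T = \phinu \nabla \theta|_T + \theta|_T \nabla \phinu|_T,
\]
so by the triangle inequality in $\bL^2(T)$ and $\|\phinu\|_{L^\infty(\Omega)} = 1$ from Lemma~\ref{lemma:partition-of-unity1}, it suffices to control $\|\phinu \nabla \theta\|_T \le \|\nabla \theta\|_T$ and the more delicate term $\|\theta \nabla \phinu\|_T$. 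For the latter I would use Hölder's inequality with exponents $4$ and $4/3$ — wait, more precisely with $\frac14 + \frac14 = \frac12$, write $\|\theta \nabla \phinu\|_T \le \|\theta\|_{L^4(T)} \|\nabla \phinu\|_{L^4(T)}$, and then invoke the $L^4(T)$ bound on $\nabla \phinu$ from~\eqref{identity&bound:partition-of-unity}, which gives $\|\nabla\phinu\|_{L^4(T)} \le C(\gamma) h_T^{-1/2}$ after absorbing the shape-regular comparison $h_T \simeq \hE$ into the constant.

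Next I would handle $\|\theta\|_{L^4(T)}$. The key point is that $\theta$ need not have zero average on $T$, so a naive Poincaré–Sobolev estimate on $T$ alone fails; instead I would split $\theta|_T = (\theta|_T - \overline{\theta}^T) + \overline{\theta}^T$ where $\overline{\theta}^T$ is the mean of $\theta$ on $T$. For the mean-free part, the scaled Sobolev embedding $H^1(T) \hookrightarrow L^4(T)$ together with the Poincaré inequality on the shape-regular triangle $T$ gives $\|\theta - \overline{\theta}^T\|_{L^4(T)} \le C(\gamma) h_T^{1/2} \|\nabla \theta\|_T$, and multiplying by $h_T^{-1/2}$ yields a clean contribution $\|\nabla\theta\|_T$. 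The constant-in-$T$ part contributes $h_T^{-1/2}\|\nabla\phinu\|_{L^4(T)}|\overline{\theta}^T|\,|T|^{1/4} \le C(\gamma) h_T^{-1}|\overline{\theta}^T|\,|T|^{1/4}$; since $|T|^{1/4}\simeq h_T^{1/2}$, this is $C(\gamma) h_T^{-1/2}|\overline{\theta}^T|$. Summing over all $T$ in $\tautildennu$, the mean-free contributions already give $\|\nabla_h\theta\|_{\omeganu}^2$ up to the constant, so the work reduces to bounding $\sum_T h_T^{-1}|\overline{\theta}^T|^2\,|T|$ — equivalently a weighted $\ell^2$ norm of the piecewise constants $\overline{\theta}^T$ — by the right-hand side of~\eqref{eq:Ccontnu}.

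The main obstacle is precisely this last reduction: controlling $\sum_{T}h_T^{-1}|T|\,|\overline{\theta}^T|^2$ by $\|\nabla_h\theta\|_{\omeganu}^2 + \sum_{e}\he^{-1}\|\Pizez\llbracket\theta\rrbracket\|_e^2$. This is a broken Poincaré-type inequality on the subtriangulation $\tautildennu$ of the patch: the subtriangulation is connected, so one can hop from any triangle to any other through a chain of face-adjacent simplices; across interior faces of $\tautildennu$ the jump of $\theta$ is controlled either by $\nabla_h\theta$ (if the face is interior to a mesh element, where $\theta$ is globally $H^1$) or, for faces lying on mesh edges $e \in \Ecalnu$, by the jump term $\he^{-1}\|\Pizez\llbracket\theta\rrbracket\|_e^2$ after noting $\|\llbracket\theta\rrbracket\|_e^2 \lesssim \|\Pizez\llbracket\theta\rrbracket\|_e^2 + \he\|\nabla_h\theta\|_{\omega_e}^2$ (Poincaré on the edge applied to the mean-free part of the jump, which is an $H^1$-trace quantity). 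Fixing a reference triangle and telescoping along these chains, using that the number of triangles and the chain lengths are uniformly bounded by $\gamma$, yields the claim; this is exactly the broken Poincaré inequality of~\cite{Brenner:2003} applied on $\tautildennu$ with the jump seminorm replaced by its lowest-order projection, and it is the one nontrivial ingredient. The remaining bookkeeping — tracking the shape-regularity constants, the equivalences $h_T \simeq \hE \simeq \he$, and summing the finitely many elements of $\taunnu$ — is routine and collapses everything into a single constant $\Ccontnu$ depending only on $\gamma$.
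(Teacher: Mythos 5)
Your argument follows exactly the route the paper itself indicates for this corollary (the paper only gives a one-line sketch: chain rule, the scaled embedding $H^1(T)\hookrightarrow L^4(T)$ on the subtriangulation, the bounds of Lemma~\ref{lemma:partition-of-unity1}, and Brenner's broken Poincar\'e inequality), and the substance is right. Two points of bookkeeping deserve correction, though. First, in the constant-in-$T$ contribution you count the factor $h_T^{-1/2}$ twice: the correct estimate is
\[
\|\overline{\theta}^T\nabla\phinu\|_{T}
\le |\overline{\theta}^T|\,\|\nabla\phinu\|_{L^4(T)}\,|T|^{1/4}
\le C(\gamma)\,|\overline{\theta}^T|,
\]
so the quantity to be handled by the broken Poincar\'e step is $\sum_T|\overline{\theta}^T|^2$, not $\sum_T h_T^{-1}|T|\,|\overline{\theta}^T|^2$; as written, your per-triangle bound $C(\gamma)h_T^{-1/2}|\overline{\theta}^T|$ would leave a spurious factor $h^{-1}$ in the final constant (and is moreover inconsistent with the weighted sum you then state), whereas the corrected version combined with $|\overline{\theta}^T|\le|T|^{-1/2}\|\theta\|_{T}$ and the broken Poincar\'e inequality on the patch gives precisely the claim with a constant depending only on $\gamma$. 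Second, the concluding broken Poincar\'e step requires the constraint encoded in the space $\widetilde H^1$ (zero mean over $\omeganu$ for one type of vertex, the boundary-edge jump contributions for the other) to eliminate the constant mode; this should be said explicitly, since without any such constraint the target inequality \eqref{eq:Ccontnu} fails already for $\theta\equiv 1$ at an interior vertex. With these repairs your proof is the paper's proof, fleshed out.
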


%---------------
\subsection{Discrete stable minimisation} \label{subsection:discrete-stable-minimisation}
%---------------
\begin{subequations} \label{eq_stab}

In the proof of the efficiency of the error estimator,
we shall need two technical bounds:
for each vertex $\nu$ in~$\Vcaln$,
there exists a constant $\Cstabnu$ such that
\begin{equation} \label{eq_stab_H1}
\min_{v_h \in \Pbb_{\p+2}(\tautildennu) \cap \Honenu}
\|\bG_h-\nabla v_h\|_{\omeganu}
\leq \Cstabnu \min_{v \in \Honenu}
\|\bG_h-\nabla v\|_{\omeganu}
\end{equation}
and
\begin{equation} \label{eq_stab_Hd}
\min_{\substack{\btau_h \in \RT_{\p}(\tautildennu) \cap \bH(\div,\omeganu)}
\\
\div \btau_h = r_h}
\|\bG_h+\btau_h\|_{\omeganu}
\leq \Cstabnu \min_{\substack{ \btau \in \bH(\div,\omeganu) \\
\div \btau = r_h}}
\|\bG_h+\btau\|_{\omeganu}
\end{equation}
for all $\bG_h$ in~$\RT_{\p}(\tautildennu)$
and~$r_h$ in~$\Pbb_{\p}(\tautildennu)$.
\end{subequations}

Under Assumption \ref{assumption_faces}, in Appendix~\ref{subappendix:p-indep}
we show that~$\Cstabnu$ depends on the shape-regularity parameter~$\gamma$
but not on the degree of accuracy~$\p$.
In the general case, in Appendix~\ref{subappendix:p-dep},
we show that~\eqref{eq_stab} holds true
with a constant depending
on the regularity parameter~$\gamma$
in Section~\ref{subsection:meshes} and possibly on~$\p$.
We conjecture that the dependence on~$\p$ for general meshes is artificial
and could be removed at the price of a more technical proof.

%----------------------------------
\section{A posteriori error estimator}\label{section:eq:prager_synge}
%----------------------------------

The goal of this section is to present the general framework
on which the construction of an
a posteriori estimator estimator is based
and propose an intuitive motivation for its definition.
We describe how the estimator can be efficiently computed in practice,
and state its key reliability and efficiency properties in
Theorem \ref{theorem:reliability-efficiency}.
The actual proof of Theorem~\ref{theorem:reliability-efficiency} is postponed
to Sections~\ref{section:flux} and~\ref{section:potential} below.

The remainder of the section is organised as follows.
We introduce the
sufficient condition we require for the forthcoming
analysis on generalised gradients in Section~\ref{subsection:gg}
and use it to derive a generalised Prager-Synge identity in Section~\ref{subsection:a-generalised-PS-identity}.
We introduce the error estimator and motivate its structure in Section~\ref{subsection:motivation},
discuss its practical computability in Section~\ref{subsection:practical-computation},
and state the main result of the paper, namely Theorem~\ref{theorem:reliability-efficiency},
along with some comments
in Section~\ref{subsection:rel-eff}.

%---------------
\subsection{Generalised gradient} \label{subsection:gg}
%---------------
The results of this section hold true
for any generalised gradient satisfying the following assumption.
\begin{assumption}[Generalised gradient]
\label{assumption:galerkin-orthogonality}
$\Gcalh$ in~$\RT_\p(\tautilden)$ is a generalised gradient
for~$\uh$ solution to~\eqref{VEM} if
\begin{equation*}
(\Gcalh,\nabla \phinu)_\Omega = (f,\phinu)_\Omega
\qquad\qquad \forall \nu \in \VcalnI .
\end{equation*}
\end{assumption}
The forthcoming analysis applies for all generalised gradients
satisfying Assumption~\ref{assumption:galerkin-orthogonality}.
$\Gcalh$ in~\eqref{generalised-gradient} is a practical realisation
of a generalised gradient satisfying the Galerkin orthogonality assumption.

%---------------
\subsection{A generalised Prager-Synge identity} \label{subsection:a-generalised-PS-identity}
%---------------
The construction of the error estimator 
hinges upon a generalised Prager--Synge identity,
as introduced in~\cite{Ern-Vohralik:2015}
for discontinuous Galerkin methods.
For completeness, we adapt the proof of~\cite[Theorem 3.3]{Ern-Vohralik:2015}
to the current setting.

\begin{thm}[Prager--Synge]
For all $\Gcalh \in \bL^2(\Omega)$
and given~$u$ the solution to~\eqref{weak-formulation},
we have
\begin{equation}
\label{eq:prager_synge}
\|\nabla u-\Gcalh\|_\Omega^2
=
\min_{s \in H^1_0(\Omega)} \|\Gcalh-\nabla s\|_\Omega^2
+
\sup_{\substack{v \in H^1_0(\Omega) \\ \|\nabla v\|_\Omega = 1}}
\left \{ (f,v)_\Omega-(\Gcalh,\nabla v)_\Omega \right \}^2.
\end{equation}
\end{thm}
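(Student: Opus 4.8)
The plan is to exploit a Helmholtz-type orthogonal decomposition of $\Gcalh$ relative to the potential that best represents it. First I would let $s^* \in H^1_0(\Omega)$ be the minimiser of $\|\Gcalh - \nabla s\|_\Omega$ over $H^1_0(\Omega)$, which exists and is unique by the projection theorem; its Euler--Lagrange equation reads $(\Gcalh - \nabla s^*, \nabla v)_\Omega = 0$ for all $v \in H^1_0(\Omega)$. Write $\nabla u - \Gcalh = (\nabla u - \nabla s^*) - (\Gcalh - \nabla s^*)$ and observe, using the orthogonality just stated applied to $v = u - s^* \in H^1_0(\Omega)$, that the two summands are $L^2(\Omega)$-orthogonal. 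Hence by Pythagoras
\[
\|\nabla u - \Gcalh\|_\Omega^2 = \|\nabla(u-s^*)\|_\Omega^2 + \|\Gcalh - \nabla s^*\|_\Omega^2,
\]
and the second term is exactly $\min_{s \in H^1_0(\Omega)} \|\Gcalh - \nabla s\|_\Omega^2$.

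It then remains to identify $\|\nabla(u-s^*)\|_\Omega^2$ with the supremum term. Since $u - s^* \in H^1_0(\Omega)$, by the definition of the dual norm we have
\[
\|\nabla(u-s^*)\|_\Omega = \sup_{\substack{v \in H^1_0(\Omega) \\ \|\nabla v\|_\Omega = 1}} (\nabla(u-s^*), \nabla v)_\Omega,
\]
the supremum being attained at $v = (u-s^*)/\|\nabla(u-s^*)\|_\Omega$ (or being trivially $0$ if $u = s^*$). For the numerator I would use the weak formulation~\eqref{weak-formulation}, namely $(\nabla u, \nabla v)_\Omega = (f,v)_\Omega$, together with the Euler--Lagrange identity $(\nabla s^*, \nabla v)_\Omega = (\Gcalh, \nabla v)_\Omega$, to get $(\nabla(u-s^*), \nabla v)_\Omega = (f,v)_\Omega - (\Gcalh, \nabla v)_\Omega$. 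Squaring yields the claimed supremum term, and combining with the Pythagorean identity above completes the proof.

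I do not anticipate a genuine obstacle here: the argument is a standard consequence of the projection theorem plus the variational characterisation of the solution, and~\eqref{eq:prager_synge} is literally the statement that the squared error splits along the orthogonal decomposition $\bL^2(\Omega) = \nabla H^1_0(\Omega) \oplus (\nabla H^1_0(\Omega))^\perp$. The only minor care point is handling the degenerate case $u = s^*$ (equivalently $\Gcalh \in \nabla H^1_0(\Omega)$), where the supremum is $0$ and the attaining $v$ does not exist; this is dispatched by noting both sides of the supremum identity equal $\|\nabla(u-s^*)\|_\Omega$ in that case as well. No further mesh or discretisation assumptions are needed, as the statement holds for arbitrary $\Gcalh \in \bL^2(\Omega)$.
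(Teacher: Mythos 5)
Your proposal is correct and follows essentially the same route as the paper: both define $s^\star$ as the $H^1_0(\Omega)$-projection (equivalently, the minimiser) of $\Gcalh$, apply Pythagoras to split the error, and identify $\|\nabla(u-s^\star)\|_\Omega$ with the residual supremum via the weak formulation and the Euler--Lagrange identity. The only difference is your explicit remark on the degenerate case $u=s^\star$, which the paper handles implicitly through the dual-norm characterisation.
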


\begin{proof}
Let~$s^\star$ be the orthogonal projection of $\Gcalh$
onto $H^1_0(\Omega)$, i.e.,
\begin{equation}
\label{definition:s}
(\nabla s^\star,\nabla v) = (\Gcalh, \nabla v)
\qquad\qquad\qquad\qquad \forall v \in H^1_0(\Omega).
\end{equation}
Pythagoras' theorem gives
\begin{equation*}
\|\nabla u - \Gcalh\|_\Omega^2
= \|\Gcalh - \nabla s^\star\|_\Omega^2
+ \|\nabla (u-s^\star)\|_\Omega^2
= \min_{s \in H^1_0(\Omega)}
\|\Gcalh - \nabla s\|_\Omega^2
+ \|\nabla (u-s^\star)\|_\Omega^2.
\end{equation*}
Using that~$u$ is the solution to~\eqref{weak-formulation}
and definition~\eqref{definition:s},
the conclusion is a consequence of the following identities:
\[
\begin{split}
\|\nabla (u-s^\star)\|_\Omega
& = \sup_{\substack{v \in H^1_0(\Omega) \\ \|\nabla v\|_\Omega = 1}}
(\nabla(u-s^\star), \nabla v)_\Omega \\
& = \sup_{\substack{v \in H^1_0(\Omega) \\ \|\nabla v\|_\Omega = 1}}
\left \{(f,v)_\Omega- (\nabla s^\star, \nabla v)_\Omega\right \}
= \sup_{\substack{v \in H^1_0(\Omega) \\ \|\nabla v\|_\Omega = 1}}
\left \{ (f,v)_\Omega - (\Gcalh, \nabla v)_\Omega \right \}.
\end{split}
\]
\end{proof}

%---------------
\subsection{Motivation and definition of the error estimator} \label{subsection:motivation}
%---------------
We can employ duality in optimisation
to control the last term
on the right-hand side of~\eqref{eq:prager_synge} as
\begin{equation*}
\sup_{\substack{v \in H^1_0(\Omega) \\ \|\nabla v\|_\Omega = 1}}
\{(f,v)_{\Omega} - (\Gcalh, \nabla v)_{\Omega}\}
\leq
\min_{\substack{\sigmabold \in \bH(\div,\Omega) \\ \div \sigmabold = f}}
\|\Gcalh+\sigmabold\|_{\Omega}.
\end{equation*}
Therefore, identity~\eqref{eq:prager_synge} states that the error
is bounded by the sum of two contributions
measuring the lack of $H^1_0(\Omega)$ and $\bH(\div,\Omega)$ conformity of~$\Gcalh$, i.e.,
\begin{equation}
\label{nabla:u-Gcalh}
\|\nabla u-\Gcalh\|_\Omega^2
\leq
\min_{s \in H^1_0(\Omega)}\|\Gcalh - \nabla s\|_\Omega^2
+
\min_{\substack{\sigmabold \in \bH(\div,\Omega) \\ \div \sigmabold = f}}
\|\Gcalh+\sigmabold\|_\Omega^2.
\end{equation}
In the construction of the error estimator,
we measure these lacks of conformity
using local computations on vertex patches
as in~\eqref{vertex-patch}.
In other words, we expect that
\begin{equation}
\label{eq:broken_prager_synge}
\|\nabla u-\Gcalh\|_\Omega^2
\simeq
\sum_{\nu \in \Vcaln}
\Big \{
\min_{\snu \in \Htildeonenu}\Norm{\Gcalh - \nabla \snu}{\omeganu}^2
+
\min_{\substack{\sigmaboldnu \in \bH(\div,\omeganu) \\ \div \sigmaboldnu = f}}
\|\Gcalh+\sigmaboldnu\|_{\omeganu}^2
\Big \}
\end{equation}
in some suitable sense, where $\Htildeonenu$ is defined in \eqref{eq:defn_Htildeonenu}.
This idea of ``broken equilibration'' was already partially explored in~\cite{ChaumontFrelet-Ern-Vohralik:2022}
but only for the second term.

\begin{subequations}
\label{eq:definition_etanu}
The minimisation problems on the right-hand side are local
but still not computable.
Due to the discrete stable minimisations in Section~\ref{subsection:discrete-stable-minimisation},
the proposed error estimator
is based on replacing the minimisation problems
with their natural finite element counterparts, namely,
\begin{equation} \label{etanuPOT}
\etanuPOT
:= \min_{v_h \in \Htildeonenu \cap \Pbb_{\p+2}(\tautildennu)} \Norm{\Gcalh - \nabla v_h}{\omeganu}
\end{equation}
and
\begin{equation} \label{etanuFLUX}
\etanuFLUX
:=
\min_{\substack{\btau_h \in \bH(\div,\omeganu) \cap \RT_\p(\tautildennu) \\ \div \btau_h = f}}
\|\Gcalh+\btau_h\|_{\omeganu}.
\end{equation}
The minimisers, which we shall henceforth denote by~$\snuh$ and~$\sigmaboldnuh$,
are computable as solution to local finite element problems,
so that the proposed estimator is computable.
\end{subequations}

%---------------
\subsection{Practical computation of the estimator on vertex patches} \label{subsection:practical-computation}
%---------------
In order to compute~$\etanuPOT$ in~\eqref{etanuPOT},
we solve
\begin{equation}
\label{primal-FEM-on-patches}
\begin{cases}
    \text{find } \snuh \in \Pbb_{\p+2}(\tautildennu) \cap \Htildeonenu \text{ such that}\\
    (\nabla \snuh, \nabla \zh)_{\omeganu} = (\Gcalh, \nabla \zh)_{\omeganu} &
    \forall \zh \in \Pbb_{\p+2}(\tautildennu) \cap \Htildeonenu,
\end{cases}
\end{equation}
where the mean-value may be fixed arbitrarily for internal vertices,
since only $\nabla \snuh$ appears in the definition of the estimator.

In order to compute~$\etanuFLUX$ in~\eqref{etanuFLUX},
we have to compute the discrete minimiser in~\eqref{etanuFLUX},
i.e., the discrete flux solving
\small{\begin{equation} \label{mixed-FEM-on-patches}
\begin{cases}
\text{find } \sigmaboldnuh \in \RT_\p(\tautildennu) \cap \bH(\div,\omeganu) \text{ and } \rnuh \in \Pbb_{\p} (\tautildennu) \text{ such that} \\
(\sigmaboldnuh, \tauboldnuh)_{\omeganu}
        + (\div\tauboldnuh, \rnuh)
        = (-\Gcalh,\tauboldnuh)_{\omeganu} 
        & \forall \tauboldnuh \in \RT_\p(\tautildennu) \cap \bH(\div,\omeganu) \\
(\div\sigmaboldnuh, \qnuh)_{\omeganu}
        = (f,\qnuh)_{\omeganu}
        & \forall \qnuh \in \Pbb_{\p} (\tautildennu).
\end{cases}
\end{equation}}\normalsize{}
The above finite element problems are local on each vertex patch
and can be solved in parallel.

%---------------
\subsection{Reliability and efficiency} \label{subsection:rel-eff}
%---------------

For all $\nu$ in~$\Vcaln$, we define the vertex estimator as
\begin{equation*}
\etanu^2
:= \etanuFLUX^2 + \etanuPOT^2 
    + \|\Gcalh-\nabla_h(\Pinablap u_h)\|_{\omeganu}^2
    + \sum_{e \in \Ecalnu} \he^{-1} \|\Pi^0\llbracket \Pinablap u_h \rrbracket\|_e^2
\end{equation*}
and the global error estimator as
\begin{equation} \label{global-error-estimator}
    \eta^2 := \sum_{\nu \in \Vcaln} \etanu^2.
\end{equation}
For a generic union~$D$ of general polygonal elements,
we shall be interested in the error measure
\begin{equation} \label{measure:error}
\ERR^2(D)
:= \|\nabla u - \Gcalh\|_{D}^2 
    + \|\Gcalh - \nablah(\Pinablap u_h)\|_D^2
    + \sum_{\substack{\e \in \Ecaln,\; \e \subset D}}
    \he^{-1} \|\Pi^0 \llbracket \Pinablap u_h \rrbracket\|_{\e}^2.
\end{equation}
In Section~\ref{subsection:nr-comparison-errors} below,
we shall assess the performance of~\eqref{measure:error}
and compare it with a more standard choice:
the two measures behave essentially the same on different
meshes and for different degrees of accuracy.

\begin{thm}
\label{theorem:reliability-efficiency}
Let~$\gamma$ be the shape-regularity constant introduced in Section~\ref{subsection:meshes};
$\Nvert$ the maximal number of vertices per element in the mesh;
$\Ccontnu$ as in~\eqref{eq:Ccontnu};
$\Cstabnu$ as in~\eqref{eq_stab}.
The following reliability
and local efficiency estimates hold true:
\begin{equation} \label{reliability-efficiency}
\begin{split}
& \ERR^2(\Omega)
    \le 4 \Nvert \sum_{\nu} \Ccontnu^2 \etanu^2 
    \le 4 \Nvert \max_{\nu\in \Nucalh} \Ccontnu^2 \sum_{\nu} \etanu^2 
    =: \crel \eta^2, \\
& \etanu^{2}
    \le 2 \Cstabnu^2 \ERR^2(\omeganu)
    =: \ceff \ERR^{2}(\omeganu).
\end{split}
\end{equation}
The constants~$\crel$ and~$\ceff$ are positive,
independent of the choice of the stabilisation
and the mesh size,
but dependent on the regularity parameter~$\gamma$
in Section~\ref{subsection:meshes}.
The constant~$\ceff$ possibly depends also on the degree of accuracy~$\p$
through~$\Cstabnu$.
If two distinct mesh elements only share one facet
as stated in Assumption~\ref{assumption_faces},
then $\ceff$ is independent of~$\p$.
\end{thm}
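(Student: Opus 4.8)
# Proof proposal for Theorem \ref{theorem:reliability-efficiency}

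\textbf{Overall strategy.}
The plan is to split the statement into the reliability bound and the efficiency bound, and in each case to combine the generalised Prager--Synge identity \eqref{eq:prager_synge} (and its corollary \eqref{nabla:u-Gcalh}) with the localisation machinery provided by the virtual partition of unity from Section~\ref{subsection:partition-of-unity} and the discrete stable minimisations from Section~\ref{subsection:discrete-stable-minimisation}. For reliability, I would start from the global right-hand side of \eqref{nabla:u-Gcalh} and decompose the two minimisation problems into contributions over the vertex patches $\omeganu$ using the partition of unity $\{\phinu\}$: writing any competitor $v \in H^1_0(\Omega)$ (resp. $\sigmabold \in \bH(\div,\Omega)$ with $\div\sigmabold=f$) through the $\phinu$-weighted pieces and invoking \eqref{eq:Ccontnu} to control the broken gradients of $\phinu\theta$, one gets the overlapping-sum estimate with the $\Nvert$ factor coming from the bounded overlap of the patches (each element sits in at most $\Nvert$ patches). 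The extra jump terms and $\|\Gcalh-\nablah(\Pinablap u_h)\|$ terms in $\ERR$ and $\etanu$ match up directly because they are defined patch-wise and summed. This yields the first line of \eqref{reliability-efficiency}.

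\textbf{Efficiency.}
For the lower bound, I would fix a vertex $\nu$ and bound each of the four terms in $\etanu^2$ separately by $\ERR^2(\omeganu)$ up to the constant $\Cstabnu$. The potential term $\etanuPOT$ is handled by \eqref{eq_stab_H1}: the continuous minimiser $\min_{v\in\Htildeonenu}\|\Gcalh-\nabla v\|_{\omeganu}$ is itself bounded by $\|\Gcalh-\nabla u\|_{\omeganu}$ (take $v$ related to $u$, using that $u$ has the right boundary behaviour on boundary patches and adjusting the mean on interior patches), hence by $\ERR(\omeganu)$, and then \eqref{eq_stab_H1} passes to the discrete minimiser. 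The flux term $\etanuFLUX$ is handled symmetrically by \eqref{eq_stab_Hd}: one needs a continuous $\btau$ with $\div\btau=f$ on $\omeganu$ that is close to $-\Gcalh$, and the natural choice is built from $-\nabla u$ corrected by a divergence-fixing term; here Assumption~\ref{assumption:galerkin-orthogonality} (the generalised Galerkin orthogonality against $\phinu$) is what guarantees the local compatibility $\div\btau = f$ can be met on each patch, and under Assumption~\ref{assumption_faces} the constant $\Cstabnu$ in \eqref{eq_stab_Hd} is $\p$-independent by the appendix. The remaining two terms of $\etanu$ already appear verbatim inside $\ERR(\omeganu)$, so they are bounded trivially. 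Collecting the four bounds and tracking constants gives $\etanu^2 \le 2\Cstabnu^2 \ERR^2(\omeganu)$ as stated (the factor $2$ absorbing the two pairs of terms via a triangle inequality / Young's inequality when re-expressing the continuous minima through $u$).

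\textbf{Main obstacle.}
The delicate point is the efficiency of $\etanuFLUX$: one must exhibit an admissible continuous flux on the patch $\omeganu$ whose divergence equals $f$ there and whose distance to $-\Gcalh$ is controlled by the error. This is exactly the step where the lack of Galerkin orthogonality normally breaks things, and it is resolved here by the generalised gradient satisfying Assumption~\ref{assumption:galerkin-orthogonality}, which makes $(\Gcalh,\nabla\phinu)_\Omega = (f,\phinu)_\Omega$ and hence makes the patch-local Neumann/compatibility conditions solvable; the construction then mirrors the equilibration argument of~\cite{Ern-Vohralik:2015} transplanted to the subtriangulation $\tautildennu$. The second subtle point is the $\p$-robustness claim: it reduces entirely to the behaviour of $\Cstabnu$, i.e. to the stable discrete minimisation results \eqref{eq_stab}, which are proved in Appendix~\ref{appendix:discrete-min} --- $\p$-independent under Assumption~\ref{assumption_faces} (Appendix~\ref{subappendix:p-indep}) and possibly $\p$-dependent otherwise (Appendix~\ref{subappendix:p-dep}) --- so the final sentence of the theorem follows by simply quoting the appropriate appendix. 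Everything else is bookkeeping of the overlap constant $\Nvert$ and elementary use of \eqref{eq:Ccontnu} and \eqref{identity&bound:partition-of-unity}.
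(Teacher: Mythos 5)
Your overall skeleton (Prager--Synge identity, partition-of-unity localisation with the overlap factor $\Nvert$ and the constant $\Ccontnu$, discrete stable minimisation \eqref{eq_stab} for the lower bounds, and the appendix for $\p$-robustness of $\Cstabnu$) is the same as the paper's, and your treatment of $\etanuPOT$ and of the two ``verbatim'' terms is essentially what is done in Sections~\ref{section:flux}--\ref{section:potential}. However, there is a genuine misplacement of the key idea for the flux-type term, in both directions. First, for the \emph{reliability} of that term you propose to localise the minimisation in \eqref{nabla:u-Gcalh} by writing a global competitor $\sigmabold$ with $\div\sigmabold=f$ through $\phinu$-weighted pieces. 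This step fails as stated: $\div(\phinu\sigmabold)=\phinu f+\nabla\phinu\cdot\sigmabold\neq f$, so the weighted piece is not admissible for the patch problem defining $\etanuFLUX$, whose constraint is $\div\btau_h=f$ on $\omeganu$ with no boundary condition. The paper instead keeps the \emph{dual} (supremum) form in \eqref{eq:prager_synge}, localises the residual functional $\Rcal$ via the partition of unity, and only then passes back to the local flux minimum by Riesz representation and the mixed problem \eqref{global-mixed}, which identifies the local dual norm $\Norm{\Rcal}{\nu,*}$ with the continuous patch minimum, hence bounds it by $\etanuFLUX$. It is precisely in this localisation that Assumption~\ref{assumption:galerkin-orthogonality} is used: $\langle\Rcal,\phinu\rangle=0$ for interior vertices allows the subtraction of the patch constants $\zeta_\nu$, so that \eqref{eq:Ccontnu} applies with a Poincar\'e-type gain on each $\omeganu$. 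Your sketch omits this orthogonality step entirely from the reliability argument.

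Second, you place the ``delicate point'' at the \emph{efficiency} of $\etanuFLUX$, claiming that Assumption~\ref{assumption:galerkin-orthogonality} is needed there to make patch-local compatibility conditions solvable, and that one must correct $-\nabla u$ by a divergence-fixing term. In the paper's broken setting this difficulty does not exist: since the local problem \eqref{etanuFLUX} imposes no normal-trace condition on $\partial\omeganu$, the field $-\nabla u$ is already admissible ($\div(-\nabla u)=f$ on $\omeganu$), and the lower bound $\etanuFLUX\le\Cstabnu\Norm{\nabla u-\Gcalh}{\omeganu}$ follows in one line from \eqref{eq_stab_Hd}. The compatibility issue you describe belongs to the classical equilibration with weighted data $\div\btau_h=\phinu f-\nabla\phinu\cdot\Gfrakh(u_h)$ and $\btau_h\in\bH_0(\div,\omeganu)$ recalled in Remark~\ref{remark:comparison_DG}; avoiding it is exactly what the broken estimator buys. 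So while your conclusion is reachable, the argument as written would stall on the flux reliability bound and invokes the generalised Galerkin orthogonality in the wrong place.
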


We conjecture that~$\ceff$ in~\eqref{reliability-efficiency}
is independent of~$\p$ also for meshes,
which do not satisfy Assumption~\ref{assumption_faces}.
\medskip

The computation of the error estimator involves
the computation of the following quantities:
\begin{itemize}
    \item the generalised gradient~$\Gcalh$ in~\eqref{generalised-gradient},
    which can be seen as an elementwise post-processing of the VE solution;
    \item the solution to primal finite element problems on vertex patches
    as in~\eqref{primal-FEM-on-patches} for the computation of~$\etanuPOT$;
    \item the solution to mixed finite element problems on vertex patches
    as in~\eqref{mixed-FEM-on-patches}
    for the computation of~$\etanuFLUX$.
\end{itemize}
The computation of the error estimator is  local and parallelizable,
whence its cost is comparable to that of a residual error estimator.
Furthermore, on meshes satisfying Assumption~\ref{assumption_faces},
it is $\p$-robust and the constants in the upper and lower bound are independent of the choice of the stabilisation.

\begin{remark}[Element-based estimator]
The estimator $\etanu$ is vertex-based. To obtain an element-based estimator, we can set
\begin{equation*}
\etaE^2 := \sum_{\nu \in \VcalE} \etanu^2
\qquad\qquad \forall \E \in \taun.
\end{equation*}
The estimates in \eqref{reliability-efficiency} also hold true for~$\etaE$
if the summation happens on the elements in the upper bound,
and the vertex patch~$\omeganu$
is replaced by the following element patch in the lower bound:
\begin{equation*}
\omegaE = \bigcup_{\nu \in \VcalE} \omeganu.
\end{equation*}
\end{remark}

\begin{remark}[Comparison with discontinuous Galerkin methods] 
\label{remark:comparison_DG}
As detailed in Lemmas~\ref{lem:upper_flux} and~\ref{lem:upper_pot}
below,
the estimator relies on the ``broken'' Prager--Synge estimation~\eqref{eq:broken_prager_synge},
instead of the usual Prager--Synge identity in~\eqref{nabla:u-Gcalh}.

For discontinuous Galerkin schemes,
the authors of~\cite{Ern-Vohralik:2015} constructed two fields
$\sigmabold_h$ in $\bH(\div,\Omega)$ with $\div \sigmabold_h = f$,
and $s_h$ in $H^1_0(\Omega)$ to insert in~\eqref{nabla:u-Gcalh}.
Similar to ours, the construction in~\cite{Ern-Vohralik:2015}
for~$\sigmabold_h$ and~$s_h$ relies on vertex-patch finite element problems,
with the crucial difference that these problems
involve hat functions of the mesh as a partition of unity.
Specifically, for each vertex $\nu$ in~$\Vcaln$,
the local finite element problems
\begin{subequations}
\label{eq:local_contribution_hat}
\begin{equation}
\label{eq:local_contribution_hat_H1}
\snuh
:= \arg \min_{\substack{v_h \in \Pbb_{\p+1}(\taunnu) \cap H^1_0(\omeganu)}}
\|\nabla_h(\phinu u_h)-\nabla v_h\|_{\omeganu}
\end{equation}
and
\begin{equation}
\sigmaboldnu_h
:= \arg \min_{\substack{
\btau_h \in \RT_\p(\taunnu) \cap \bH_0(\div,\omeganu) \\
\div \btau_h = \phinu f - \nabla \phinu \cdot \Gfrakh(u_h)}}
\|\phinu \Gfrakh(u_h)+\btau_h\|_{\omeganu}
\end{equation}
\end{subequations}
are solved with suitable modifications at boundary vertices.

These local contributions are summed to obtain
the globally conforming fields~$s_h$ and~$\sigmabold_h$.
For all $\E$ in~$\taun$,
the lower bound takes the form
\[
\begin{split}
& \|\Gfrakh(u_h)+\sigmabold_h\|_{K}^2+\|\Gfrakh(u_h)-\nabla s_h\|_{K}^2 \\
& \le \Nvert \CstabE^2 \CcontE^2
\Big( \|\nabla u-\Gfrakh(u_h)\|_{\omegaE}^2
+ \|\nabla_h u_h-\Gfrakh(u_h)\|_{\omegaE}^2
+ \sum_{\substack{e \in \Ecaln \\ e \subset \overline{\omegaE}}} \he^{-1}\|\Pi^0\llbracket u_h \rrbracket\|_e^2 \Big),
\end{split}
\]
where
\begin{equation*}
\CstabE := \max_{\nu \in \VcalE} \Cstabnu,
\qquad\qquad\qquad\qquad
\CcontE := \max_{\nu \in \VcalE} \Ccontnu.
\end{equation*}
A key asset of this construction is that the upper-bound in~\eqref{eq:prager_synge} is constant-free,
leading to guaranteed error bounds.
However, the presence (and the computability) of the partition of unity in~\eqref{eq:local_contribution_hat}
is essential to obtain globally equilibrated fields~$s_h$ and~$\sigmabold_h$.
As a result, this approach is fundamentally limited to simplicial
and tensor-product meshes,
where such closed-form partition of unity functions are available.

In contrast, the broken approach we propose here enables to compute the estimator
even if the partition of unity is not explicitly available,
at the price of displacing the constants~$\Nvert$
and~$\Ccontnu$ from the lower bound to the upper bound;
see Theorem~\ref{theorem:reliability-efficiency}, 
and Lemmas~\ref{lem:upper_flux} and~\ref{lem:upper_pot} below.
For this reason,
the constants and terms in the upper and lower bounds here
are a generalisation of those appearing in the
Galerkin schemes in~\cite{Ern-Vohralik:2015},
and therefore they are as good as the state-of-the-art.
\end{remark}

%----------------------------------
\section{The flux-type term}\label{section:flux}
%----------------------------------
We prove upper and (local) lower bounds for the second term on the right-hand side
of~\eqref{eq:prager_synge}.

\begin{lem}[Upper bound for the flux-type term]
\label{lem:upper_flux}
Assume that $\Gcalh$ satisfies Assumption \ref{assumption:galerkin-orthogonality}. Then,
the following upper bound holds true:
\begin{equation}
\label{eq:upper_bound_Hd}
\sup_{\substack{v \in H^1_0(\Omega) \\ \|\nabla v\|_{\Omega} = 1}}
\left \{
(f,v)_\Omega - (\Gcalh,\nabla v)_{\Omega}
\right \}^2
\leq
\Nvert \sum_{\nu \in \Vcaln} \Ccontnu^2 \etanuFLUX^2,
\end{equation}
where the constant~$\Ccontnu$ is that appearing
in~\eqref{eq:Ccontnu} and~$\Nvert$ denotes the maximal number
of vertices of the elements of~$\taun$.
\end{lem}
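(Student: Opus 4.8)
The plan is to test the supremum against an arbitrary $v \in H^1_0(\Omega)$ with $\|\nabla v\|_\Omega = 1$, use the partition of unity $\{\phinu\}$ to decompose $v = \sum_{\nu \in \Vcaln} \phinu v$, and then exploit Assumption~\ref{assumption:galerkin-orthogonality} (Galerkin orthogonality against the $\phinu$) to subtract constants on each patch for free. Concretely, write
\[
(f,v)_\Omega - (\Gcalh,\nabla v)_\Omega
= \sum_{\nu \in \Vcaln} \big[ (f,\phinu v)_\Omega - (\Gcalh, \nabla(\phinu v))_\Omega \big].
\]
For an \emph{interior} vertex $\nu$, Assumption~\ref{assumption:galerkin-orthogonality} lets us replace $\phinu v$ by $\phinu v - c_\nu$ for any constant $c_\nu$ (since $(\Gcalh,\nabla\phinu)_\Omega = (f,\phinu)_\Omega$), so we pick $c_\nu$ to make $\phinu v - c_\nu$ have zero mean on $\omeganu$; for a boundary vertex, $\phinu v$ already vanishes on $\partial\Omega \cap \partial\omeganu$, so no correction is needed. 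In either case the resulting function lies in $\Htildeonenu$, hence in particular belongs to the admissible set appearing implicitly in the duality bound behind $\etanuFLUX$.

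The next step is to handle each patch term. On the patch $\omeganu$, for any $\btau_h \in \RT_\p(\tautildennu) \cap \bH(\div,\omeganu)$ with $\div\btau_h = f$, integration by parts on $\omeganu$ gives
\[
(f, w^\nu)_{\omeganu} - (\Gcalh, \nabla w^\nu)_{\omeganu}
= (\div\btau_h, w^\nu)_{\omeganu} - (\Gcalh, \nabla w^\nu)_{\omeganu}
= -(\btau_h + \Gcalh, \nabla w^\nu)_{\omeganu},
\]
where $w^\nu := \phinu v - c_\nu \in \Htildeonenu$ and the boundary term $(\btau_h\cdot\bn, w^\nu)_{\partial\omeganu}$ vanishes because $w^\nu$ has zero trace on the exterior part and $\btau_h \in \bH(\div,\omeganu)$ has single-valued normal trace on interior faces while $w^\nu$ is globally $H^1$. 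Applying Cauchy--Schwarz and then minimising over admissible $\btau_h$ yields
\[
\big| (f,\phinu v)_\Omega - (\Gcalh,\nabla(\phinu v))_\Omega \big|
\le \etanuFLUX \, \|\nabla(\phinu v)\|_{\omeganu}.
\]
Here I would invoke Corollary~\eqref{eq:Ccontnu} with $\theta = v$ (which lies in $\widetilde H^1(\tautildennu)$, indeed in $H^1(\omeganu)$, so the jump terms vanish) to get $\|\nabla(\phinu v)\|_{\omeganu} \le \Ccontnu \|\nabla v\|_{\omeganu}$.

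Finally, combine: summing over $\nu$, using Cauchy--Schwarz in $\ell^2$ over the (at most $\Nvert$-fold overlapping) patches,
\[
\Big( \sum_\nu \etanuFLUX \Ccontnu \|\nabla v\|_{\omeganu} \Big)^2
\le \Big( \sum_\nu \Ccontnu^2 \etanuFLUX^2 \Big) \Big( \sum_\nu \|\nabla v\|_{\omeganu}^2 \Big)
\le \Nvert \Big( \sum_\nu \Ccontnu^2 \etanuFLUX^2 \Big) \|\nabla v\|_\Omega^2,
\]
since $\sum_\nu \|\nabla v\|_{\omeganu}^2 \le \Nvert \|\nabla v\|_\Omega^2$ by the bounded overlap of the vertex patches (each element belongs to exactly as many patches as it has vertices, at most $\Nvert$). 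Taking the supremum over $\|\nabla v\|_\Omega = 1$ gives the claim. The main obstacle I anticipate is the careful bookkeeping for boundary vertices — making sure the constant-shift trick is only applied at interior vertices, that the boundary-patch functions genuinely lie in $\Htildeonenu$ so that $\etanuFLUX$'s admissible set is the right one, and that the normal-trace/integration-by-parts argument on $\partial\omeganu$ is valid there; the interior estimates are otherwise routine once Corollary~\eqref{eq:Ccontnu} is in hand.
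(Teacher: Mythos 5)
Your overall strategy coincides with the paper's: decompose the residual through the partition of unity, use Assumption~\ref{assumption:galerkin-orthogonality} to subtract patchwise corrections at interior vertices, bound each patch contribution by $\etanuFLUX$ times $\|\nabla(\phinu\,\cdot)\|_{\omeganu}$, and conclude with the cut-off estimate~\eqref{eq:Ccontnu}, Cauchy--Schwarz, and the $\Nvert$-fold overlap of the patches. Your local duality step (integration by parts against an arbitrary admissible $\btau_h$ with $\div\btau_h=f$, then Cauchy--Schwarz and minimisation) is a legitimate shortcut: the paper instead introduces the local dual norm on $H^1_0(\omeganu)$, identifies it via Riesz representation and a mixed problem with the continuous constrained minimum, and only then passes to the discrete minimum $\etanuFLUX$; your weak-duality inequality reaches the same bound more directly.

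There is, however, a genuine slip in the bookkeeping of the correction and in the use of~\eqref{eq:Ccontnu}. First, what Assumption~\ref{assumption:galerkin-orthogonality} lets you subtract for free is $c_\nu\phinu$, not the bare constant $c_\nu$: replacing $\phinu v$ by $\phinu v - c_\nu$ changes $(f,\cdot)_{\omeganu}$ by $c_\nu(f,1)_{\omeganu}$, and moreover $\phinu v - c_\nu$ does not vanish on $\partial\omeganu$, so the boundary term in your integration by parts would not disappear. You must work with $w^\nu=\phinu(v-c_\nu)$ (with $c_\nu=0$ at boundary vertices), which does vanish on $\partial\omeganu$; note also that membership of $w^\nu$ in $\Htildeonenu$ is irrelevant for the flux term --- the zero trace on $\partial\omeganu$ is all the integration by parts needs, and $\Htildeonenu$ only matters for the potential term. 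Second, you cannot invoke~\eqref{eq:Ccontnu} with $\theta=v$: for an interior patch the inequality $\|\nabla(\phinu v)\|_{\omeganu}\le \Ccontnu\|\nabla v\|_{\omeganu}$ is false in general (take $v$ close to a nonzero constant on $\omeganu$: the left-hand side is of order $\|\nabla\phinu\|_{\omeganu}$ while the right-hand side is tiny). The correct choice --- and the paper's --- is $c_\nu=(v,1)_{\omeganu}/|\omeganu|$, the patch mean of $v$, after which~\eqref{eq:Ccontnu} applied to $\theta=v-c_\nu$ (mean-zero, jumps vanishing since $v\in H^1$) yields $\|\nabla(\phinu(v-c_\nu))\|_{\omeganu}\le \Ccontnu\|\nabla v\|_{\omeganu}$; your alternative normalisation of $c_\nu$ (making $w^\nu$ itself mean-zero) does not make $v-c_\nu$ mean-zero and so does not fit the corollary. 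With these two corrections your argument closes and is in substance the paper's proof.
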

\begin{proof}
We introduce the linear functional~$\Rcal \in (H^1_0(\Omega))'$
given by
\[
\langle \Rcal, v \rangle
:=
(f,v)_{\Omega} - (\Gcalh,\nabla v)_{\Omega} 
\qquad\qquad\qquad\qquad
\forall v \in H^1_0(\Omega)
\]
and its dual norm
\[
\Norm{\Rcal}{*}
:=
\sup_{\substack{v \in H^1_0(\Omega) \\ \|\nabla v\|_\Omega = 1}} \langle \Rcal,v \rangle.
\]
By Assumption~\ref{assumption:galerkin-orthogonality}, we have
\begin{equation}
\label{Rcal-zero-on-Vh}
\langle \Rcal, \phinu \rangle = 0
\qquad\qquad\qquad\qquad
\forall \nu \in \VcalnI .
\end{equation}
Fix $v$ in~$H^1_0(\Omega)$,
and~$\zeta_\nu = 0$ if $\nu \in \VcalnB$ and
$\zeta_\nu = (v,1)_{\omeganu}/|\omeganu|$ if $\nu \in \VcalnI$.
Since the set $\{ \phinu \}_{\nu \in \Vcaln}$
is a partition of unity, using~\eqref{Rcal-zero-on-Vh},
we have
\[
\langle \Rcal, v \rangle
=
\sum_{\nu \in \Vcaln} \langle \Rcal, \phinu v \rangle
=
\sum_{\nu \in \Vcaln} \langle \Rcal, \phinu (v-\zeta_\nu) \rangle .
\]
Introducing the local dual norms
\begin{equation}
\label{dual-norm-nu-Rcal}
\Norm{\Rcal}{\nu,*}
:= \sup_{\substack{v \in H^1_0(\omeganu) \\ \|\nabla v\|_{\omeganu} = 1}}
\langle \Rcal, v \rangle
\end{equation}
and using \eqref{eq:Ccontnu} and the properties of~$\zeta_\nu$,
we arrive at
\[
\langle \Rcal, v \rangle
\leq
\sum_{\nu \in \Vcaln}
\Norm{\Rcal}{\nu,*}\|\nabla(\phinu (v-\zeta^\nu)\|_{\omeganu}
\leq
\sum_{\nu \in \Vcaln}
\Ccontnu \Norm{\Rcal}{\nu,*}\|\nabla v\|_{\omeganu}.
\]
Further using Cauchy-Schwarz' inequality leads to
\begin{equation*}
|\langle \Rcal, v \rangle|^2
\leq \Big( \sum_{\nu \in \Vcaln} \Ccontnu^2 \Norm{\Rcal}{\nu,*}^2 \Big)
\Big( \sum_{\nu \in \Vcaln} \|\nabla v\|_{\omeganu}^2 \Big)
\leq \Nvert \Big( \sum_{\nu \in \Vcaln} \Ccontnu^2 \Norm{\Rcal}{\nu,*}^2 \Big)
\|\nabla v\|_{\Omega}^2 .
\end{equation*}
Since~$v$ is arbitrary, we conclude that
\begin{equation}
\label{splitting:R}
\Norm{\Rcal}{*}^2
\leq
\Nvert
\sum_{\nu \in \Vcaln}
\Ccontnu^2 \Norm{\Rcal}{\nu,*}^2.
\end{equation}
So far, we proved an upper bound for the second term on the right-hand side of~\eqref{eq:prager_synge},
i.e., we split it into the sum of local contributions.
Next, we bound such local
contributions
on the right-hand side of~\eqref{splitting:R}
by means of a quantity that is
computable via the degrees of freedom and can be obtained by means of local computations on
vertex patches.

By Riesz' representation theorem,
there exists~$\rnu$ in~$H^1_0(\omeganu)$ such that
\begin{equation} \label{Riesz-isometry}
(\nabla \rnu,\nabla v)_{\omeganu}
= \langle \Rcal , v \rangle
\qquad \forall v \in H^1_0(\omeganu),
\qquad\qquad 
\Norm{\Rcal}{\nu,*} = \Norm{\nabla\rnu}{\omeganu}.
\end{equation}
The definition of~$\Rcal$ allows us to write
\[
(\Gcalh + \nabla\rnu, \nabla v)_{\omeganu}
= (f,v)_{\omeganu}
\qquad\qquad\qquad\qquad
\forall v \in H^1_0(\omeganu).
\]
It follows that the function $\sigmaboldnu := -(\Gcalh+\nabla \rnu)$ belongs to $\bH(\div,\omeganu)$ with $\div \sigmaboldnu = f$.
As a result, $\sigmaboldnu$ and~$\rnu$
are the solutions to the following mixed problem:
\begin{equation} \label{global-mixed}
\begin{cases}
    \text{find } (\sigmaboldnu,\rnu) \in \bH(\div,\omeganu) \times L^2(\omeganu) \text{ such that} \\
    (\sigmaboldnu, \tauboldnu)_{\omeganu}
            + (\div\tauboldnu, \rnu)
            = (-\Gcalh,\tauboldnu)_{\omeganu} 
            & \forall \tauboldnu \in \bH(\div,\omeganu) \\
    (\div\sigmaboldnu, \qnu)_{\omeganu}
            = (f,\qnu)_{\omeganu}
            & \forall \qnu \in L^2(\omeganu).
\end{cases}
\end{equation}
Due to~\eqref{dual-norm-nu-Rcal}, \eqref{Riesz-isometry}, and~\eqref{global-mixed},
since~$\sigmaboldnu$ solves the above mixed problem, we write
\[
\Norm{\Rcal}{\nu,*}
= \Norm{\sigmaboldnu + \Gcalh}{\omeganu}
= \min_{\tauboldnu \in \bH(\div,\omeganu),\ 
        \div \tauboldnu =f}
        \Norm{\tauboldnu + \Gcalh}{\omeganu}.
\]
Recall that~$\tautildennu$ is the subtriangulation
of~$\omeganu$
obtained by merging the subtriangulations~$\tautildeE$
of all elements~$\E$ contained in~$\omeganu$.
For future convenience, we also define~$\taunnu$
as the set of elements~$\E$ in~$\taun$ that are contained in~$\omeganu$.
Recalling the definition in \eqref{eq:definition_etanu},
we arrive at
\[
\Norm{\Rcal}{\nu,*}
\le \min_{\tauboldnuh \in \RT_\p(\tautildennu),\ 
        \div \tauboldnuh =f}
        \Norm{\tauboldnuh + \Gcalh}{\omeganu}
=: \etanuFLUX.
\]
The bound in~\eqref{eq:upper_bound_Hd} follows from \eqref{splitting:R}.
\end{proof}

Next, we show the (local) lower bound for the flux-type term.

%%%
\begin{lem}[Lower bound for the flux-type term]
Let~$\Cstabnu$ be the constant in~\eqref{eq_stab_Hd}.
For all $\nu$ in~$\Vcaln$, we have
\begin{equation*}
\etanuFLUX \leq \Cstabnu \|\nabla u-\Gcalh\|_{\omeganu}.
\end{equation*}
\end{lem}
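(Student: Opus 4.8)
The plan is to bound $\etanuFLUX$, which is a discrete minimum, first by the corresponding continuous minimum using the discrete stable minimisation estimate~\eqref{eq_stab_Hd}, and then to bound the continuous minimum by $\|\nabla u - \Gcalh\|_{\omeganu}$ by exhibiting a good competitor in the continuous minimisation problem over $\{\btau \in H(\div,\omeganu) : \div\btau = f\}$.

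First I would apply~\eqref{eq_stab_Hd} with $\bG_h = \Gcalh$ and $r_h = f$ (recalling $f \in \Pbb_{\p-2}(\taun) \subset \Pbb_\p(\tautildennu)$), which yields
\[
\etanuFLUX
= \min_{\substack{\btau_h \in \RT_\p(\tautildennu)\cap H(\div,\omeganu)\\ \div\btau_h = f}} \|\Gcalh + \btau_h\|_{\omeganu}
\le \Cstabnu \min_{\substack{\btau \in H(\div,\omeganu)\\ \div\btau = f}} \|\Gcalh + \btau\|_{\omeganu}.
\]
Then the goal reduces to showing that the continuous minimum is at most $\|\nabla u - \Gcalh\|_{\omeganu}$. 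For this I would simply take the competitor $\btau := -\nabla u$ restricted to $\omeganu$. Since $u$ solves~\eqref{weak-formulation}, we have $-\Delta u = f$ in the distributional sense on $\Omega$ and hence on $\omeganu$, so $-\nabla u \in \bH(\div,\omeganu)$ with $\div(-\nabla u) = f$; thus $-\nabla u$ is admissible in the continuous minimisation. Plugging it in gives
\[
\min_{\substack{\btau \in H(\div,\omeganu)\\ \div\btau = f}} \|\Gcalh + \btau\|_{\omeganu}
\le \|\Gcalh - \nabla u\|_{\omeganu} = \|\nabla u - \Gcalh\|_{\omeganu},
\]
and combining the two displays finishes the proof.

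The argument is short; the only point requiring a little care is verifying the admissibility of $-\nabla u$, i.e.\ that $\div(-\nabla u) = f$ holds in $L^2(\omeganu)$ with no boundary contributions from $\partial\omeganu$ — this is immediate because $f \in L^2(\Omega)$ and the identity $-\Delta u = f$ is a genuine equality in $L^2(\Omega)$, so its restriction to the open set $\omeganu$ is again an $L^2$ identity. A secondary bookkeeping point is checking that $f$ lies in the discrete divergence space $\Pbb_\p(\tautildennu)$ so that~\eqref{eq_stab_Hd} applies with $r_h = f$; this follows from the standing assumption $f \in \Pbb_{\p-2}(\taun)$ made in Section~\ref{subsection:VEM}, since each mesh element is a union of triangles of $\tautilden$. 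I do not anticipate any real obstacle here — the nontrivial work has already been done in establishing~\eqref{eq_stab_Hd} in the appendix.
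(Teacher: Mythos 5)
Your proposal is correct and follows the paper's own proof essentially verbatim: apply the discrete stable minimisation bound~\eqref{eq_stab_Hd} with $\bG_h=\Gcalh$ and $r_h=f$, and then use $-\nabla u$ (admissible since $\div(-\nabla u)=f$ on $\omeganu$) as a competitor in the continuous minimisation. The two bookkeeping checks you mention are fine and are exactly what the paper tacitly relies on.
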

\begin{proof}
The assertion is a consequence of the results stated in~\eqref{eq_stab_Hd}
and established in Appendix~\ref{appendix:discrete-min}.
Indeed, since $\Gcalh$ and~$-\nabla u$
belong to~$\RT_{\p}(\tautildennu)$ and $\bH(\div,\omeganu)$
with $\div (-\nabla u) = f$, respectively, we have
\small{\[
\begin{split}
\etanuFLUX
& := \min_{\substack{\sigmaboldnuh \in \RT_{\p}(\tautildennu) \cap \bH(\div,\omeganu) \\
\div \sigmaboldnuh = f}}
    \|\Gcalh+\sigmaboldnuh\|_{\omeganu} 
 \leq \Cstabnu \min_{\substack{
                    \sigmaboldnu \in \bH(\div,\omeganu) \\
                    \div \sigmaboldnu = f }}
    \|\Gcalh+\sigmaboldnu\|_{\omeganu}
\leq \Cstabnu \|\Gcalh-\nabla u\|_{\omeganu}.
\end{split}
\]}\normalsize{}
\end{proof}

%----------------------------------
\section{The potential-type term} \label{section:potential}
%----------------------------------
We prove upper and (local) lower bounds
for the first term on the right-hand side of~\eqref{eq:prager_synge}.

\begin{lem}[Upper bound for the potential-type term] \label{lem:upper_pot}
For all $\Gcalh \in \bL^2(\Omega)$, the following upper bound holds true
\[
\begin{split}
& \min_{s \in H^1_0(\Omega)}
\|\Gcalh-\nabla s\|_\Omega^2 \\
& \leq 4\Nvert \sum_{\nu \in \Vcaln}
\Ccontnu^2 \Big(
\etanuPOT^2 + \|\Gcalh-\nablah \Pinablap\uh\|_{\omeganu}^2 +
\sum_{e \in \Ecalnu}
\he^{-1}\|\Pi^0\llbracket \Pinablap\uh\rrbracket\|_e^2 \Big),
\end{split}
\]
where the constant~$\Ccontnu$ appears in~\eqref{eq:Ccontnu}
and~$\Nvert$ denotes the maximal number of vertices of the elements of~$\taun$.
\end{lem}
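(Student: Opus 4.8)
The plan is to build, from the local minimisers realising $\etanuPOT$ and the virtual partition of unity $\{\phinu\}_{\nu\in\Vcaln}$, one admissible competitor $s\in H^1_0(\Omega)$ in the minimisation on the left-hand side and then to estimate $\|\Gcalh-\nabla s\|_\Omega$ patchwise; the argument parallels the one for the flux-type term in Lemma~\ref{lem:upper_flux}, with the roles of $\bH(\div)$- and $H^1_0$-conformity interchanged. For each $\nu$ in $\Vcaln$, let $\snuh\in\Htildeonenu$ be the discrete minimiser realising $\etanuPOT$ in~\eqref{etanuPOT}, so that $\|\Gcalh-\nabla\snuh\|_{\omeganu}\le\etanuPOT$; observe that $\snuh$ is continuous across the mesh edges, i.e. $\llbracket\snuh\rrbracket_e=0$ for every $e\in\Ecaln$. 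I would set $s:=\sum_{\nu\in\Vcaln}\phinu\,\snuh$. Because $\phinu$ equals $1$ at $\nu$, vanishes at the other vertices and is piecewise linear on the mesh skeleton, each $\phinu\snuh$ --- using Lemma~\ref{lemma:partition-of-unity1} and, at boundary vertices, the homogeneous trace of $\snuh$ built into $\Htildeonenu$ --- extends by zero to a function of $H^1_0(\Omega)$; hence $s\in H^1_0(\Omega)$ and $\min_{z\in H^1_0(\Omega)}\|\Gcalh-\nabla z\|_\Omega^2\le\|\Gcalh-\nabla s\|_\Omega^2$.

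The core of the argument is the elementwise cancellation $\sum_{\nu\in\VcalE}\phinu\equiv1$ on each $\E$, whence $\sum_{\nu\in\VcalE}\nabla\phinu=0$ on $\E$; this permits subtracting the common element polynomial $\Pinablap\uh|_\E$ inside the local sums and rewriting
\[
\Gcalh-\nabla s
= \sum_{\nu\in\Vcaln}\phinu\bigl(\Gcalh-\nablah\Pinablap\uh\bigr)
  - \sum_{\nu\in\Vcaln}\nablah\bigl(\phinu(\snuh-\Pinablap\uh)\bigr).
\]
Both sums have bounded overlap: every element lies in at most $\Nvert$ patches, so the squared $\Omega$-norm of each is bounded by $\Nvert$ times the sum over $\nu$ of the squared $\omeganu$-norm of the $\nu$-th summand. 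The first sum is bounded by $\Nvert\sum_\nu\|\Gcalh-\nablah\Pinablap\uh\|_{\omeganu}^2$ using $0\le\phinu\le1$. For the second, I would invoke~\eqref{eq:Ccontnu} with $\theta:=\snuh-\Pinablap\uh\in\widetilde{H}^1(\tautildennu)$, giving $\|\nablah(\phinu\theta)\|_{\omeganu}^2\le\Ccontnu^2\bigl(\|\nablah\theta\|_{\omeganu}^2+\sum_{e\in\Ecalnu}\he^{-1}\|\Pi^0\llbracket\theta\rrbracket\|_e^2\bigr)$; since $\llbracket\snuh\rrbracket_e=0$ one has $\llbracket\theta\rrbracket_e=\llbracket\Pinablap\uh\rrbracket_e$ on each $e\in\Ecalnu$, so that jump sum is exactly the last term of the statement, while $\|\nablah\theta\|_{\omeganu}\le\|\Gcalh-\nabla\snuh\|_{\omeganu}+\|\Gcalh-\nablah\Pinablap\uh\|_{\omeganu}\le\etanuPOT+\|\Gcalh-\nablah\Pinablap\uh\|_{\omeganu}$. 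Collecting these estimates via the triangle inequality and tracking the multiplicative constants yields the asserted bound.

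I expect the genuinely delicate point to be the second sum above: the local reconstruction $\snuh$ is not itself small --- only its gradient is close to $\Gcalh$ --- so a direct estimate of $\snuh\nabla\phinu$ is useless, and it is precisely the cancellation $\sum_{\nu\in\VcalE}\nabla\phinu=0$ that lets one replace $\snuh$ by $\snuh-\Pinablap\uh|_\E$ and thus recover a quantity whose broken gradient is governed by $\etanuPOT$ and the consistency term $\|\Gcalh-\nablah\Pinablap\uh\|_{\omeganu}$, and whose interelement jumps collapse, through $\llbracket\snuh\rrbracket_e=0$, to the computable jumps of $\Pinablap\uh$; this is the mechanism by which all three terms on the right-hand side appear, together with the factors $\Nvert$ and $\Ccontnu$. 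A secondary, purely technical matter is the bookkeeping at boundary vertices, where the mean-value normalisation of $\snuh$ must be traded for the homogeneous Dirichlet condition on the edges of $\EcalnuB$ (the corresponding boundary ``jumps'' being then read as traces) while keeping~\eqref{eq:Ccontnu} applicable.
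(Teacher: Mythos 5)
Your proposal is correct and takes essentially the same route as the paper's proof: the same competitor $s_h=\sum_{\nu\in\Vcaln}\phinu\snuh$, the bounded-overlap factor $\Nvert$, the inequality \eqref{eq:Ccontnu} applied to $\theta=\snuh-\Pinablap\uh$ with the jumps collapsing to $\llbracket\Pinablap\uh\rrbracket$, and a final triangle inequality producing the factor $4$ (your partition-of-unity identity for $\Gcalh-\nabla s$ is just a mild repackaging of the paper's initial global triangle-inequality split). The one point you should state explicitly is that, for interior vertices, the free mean value of $\snuh$ is fixed so that $(\snuh,1)_{\omeganu}=(\Pinablap\uh,1)_{\omeganu}$, which is what puts $\theta$ in $\widetilde H^1(\tautildennu)$ and legitimises invoking \eqref{eq:Ccontnu} — exactly as the paper does.
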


\begin{proof}
Recall that $\snuh$ are the minimisers
in the definition of~$\etanuPOT$ in \eqref{eq:definition_etanu}.
For interior vertices, the mean value of the~$\snuh$
can be freely chosen;
henceforth, we assume that $(\snuh,1)_{\omeganu} = (\Pinablap \uh,1)_{\omeganu}$
whenever $\nu$ belongs to~$\VcalnI$.

We introduce
\begin{equation*}
s_h := \sum_{\nu \in \Vcaln} \phinu \snuh \in H^1_0(\Omega).
\end{equation*}
The triangle inequality entails
\begin{equation*}
\min_{s \in H^1_0(\Omega)} \|\Gcalh-\nabla s\|_\Omega^2
\leq \|\Gcalh-\nabla s_h\|_\Omega^2
\leq 2 \|\Gcalh-\nabla_h \Pinablap\uh\|_\Omega^2
  + 2 \|\nabla_h(\Pinablap\uh-s_h)\|_\Omega^2.
\end{equation*}
As for the first term, we have
\begin{equation*}
\|\Gcalh-\nablah \Pinablap\uh\|_\Omega^2
\leq
\sum_{\nu \in \Vcaln} \|\Gcalh-\nablah \Pinablap\uh\|_{\omeganu}^2 .
\end{equation*}
As for the second term,
we plug in the partition of unity $\{\phinu\}_{\nu \in \Vcaln}$:
\begin{equation*}
\|\nabla_h(\Pinablap\uh- s_h)\|_\Omega^2
=
\Norm{\sum_{\nu \in \Vcaln} \nabla_h(\phinu(\Pinablap\uh-\snuh))}{\Omega}^2
\leq
\Nvert
\sum_{\nu \in \Vcaln}
\left \|
\nabla_h(\phinu(\Pinablap\uh-\snuh))
\right \|_{\omeganu}^2.
\end{equation*}
Next, we employ~\eqref{eq:Ccontnu}
and the fact that~$\snuh$ belongs to~$H^1(\omeganu)$
with $(\snuh,1)_{\omeganu} = (\Pinablap\uh,1)_{\omeganu}$:
\begin{equation*}
\|\nabla_h(\phinu(\Pinablap\uh-\snuh))\|_{\omeganu}^2
\leq \Ccontnu^2
\Big( \|\nabla_h(\Pinablap\uh-\snuh))\|_{\omeganu}^2
+ \sum_{e \in \Ecalnu} \he^{-1} \|\Pi^0 \llbracket \Pinablap\uh \rrbracket\|_e^2 \Big) . 
\end{equation*}
The assertion follows from the triangle inequality:
\begin{equation*}
\|\nabla_h(\Pinablap\uh-\snuh))\|_{\omeganu}^2
\leq
2\|\Gcalh-\nabla_h(\Pinablap \uh)\|_{\omeganu}^2 + 2\|\Gcalh-\nabla\snuh\|_{\omeganu}^2.
\end{equation*}
\end{proof}

Next, we show the (local) lower bound for the potential-type term.
\begin{lem}[Lower bound for the potential-type term]
Let~$\Cstabnu$ be the constant in~\eqref{eq_stab_H1}.
For all $\nu$ in~$\Vcaln$, we have
\begin{equation*}
\etanuPOT \leq \Cstabnu \|\nabla u-\Gcalh\|_{\omeganu}.
\end{equation*}
\end{lem}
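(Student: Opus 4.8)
The plan is to bound the discrete potential estimator $\etanuPOT$ by comparing with the exact solution $u$, exploiting the discrete stable minimisation property~\eqref{eq_stab_H1}. First I would observe that, by definition, $\etanuPOT$ is a minimum over $v_h$ in $\Pbb_{\p+2}(\tautildennu) \cap \Htildeonenu$ of $\|\Gcalh - \nabla v_h\|_{\omeganu}$, so that~\eqref{eq_stab_H1} applied with $\bG_h = \Gcalh$ (which belongs to $\RT_\p(\tautildennu)$ by Definition~\ref{defn:gg}) gives
\[
\etanuPOT
= \min_{v_h \in \Pbb_{\p+2}(\tautildennu) \cap \Htildeonenu} \|\Gcalh - \nabla v_h\|_{\omeganu}
\le \Cstabnu \min_{v \in \Htildeonenu} \|\Gcalh - \nabla v\|_{\omeganu}.
\]

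Next I would bound the continuous minimum on the right-hand side by choosing a convenient competitor. The exact solution $u$ lies in $H^1_0(\Omega)$, hence $u|_{\omeganu}$ belongs to $H^1(\omeganu)$. For interior vertices I would take $v = u - (u,1)_{\omeganu}/|\omeganu|$, which lies in $\Htildeonenu$ and satisfies $\nabla v = \nabla u$; for boundary vertices, $u|_{\omeganu}$ already vanishes on the boundary edges in $\EcalnuB$ since $u \in H^1_0(\Omega)$, so $v = u$ is admissible. In either case $\nabla v = \nabla u$ on $\omeganu$, and therefore
\[
\min_{v \in \Htildeonenu} \|\Gcalh - \nabla v\|_{\omeganu}
\le \|\Gcalh - \nabla u\|_{\omeganu}
= \|\nabla u - \Gcalh\|_{\omeganu}.
\]
Combining the two displays yields $\etanuPOT \le \Cstabnu \|\nabla u - \Gcalh\|_{\omeganu}$, which is the claim.

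There is essentially no serious obstacle here: the heavy lifting is entirely contained in the discrete stable minimisation estimate~\eqref{eq_stab_H1}, which is assumed (and proved in Appendix~\ref{appendix:discrete-min}). The only points requiring a moment's care are (i) checking that $\Gcalh \in \RT_\p(\tautildennu)$ so that~\eqref{eq_stab_H1} is applicable — this is immediate from Definition~\ref{defn:gg} and the fact that $\tautildennu$ is the merge of the element subtriangulations — and (ii) handling the mean-value normalisation for interior vertices versus the boundary-trace constraint for boundary vertices when exhibiting $u$ (or a constant shift of it) as an admissible competitor in $\Htildeonenu$. Both are routine, so the proof is short. This mirrors exactly the argument used for the flux-type lower bound, with~\eqref{eq_stab_Hd} replaced by~\eqref{eq_stab_H1}.
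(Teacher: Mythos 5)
Your proof is correct and follows essentially the same route as the paper: apply the discrete stable minimisation bound~\eqref{eq_stab_H1} with $\bG_h = \Gcalh$ and then use the exact solution $u$ as a competitor in the continuous minimisation over $\Htildeonenu$. Your extra care in checking that $u$ (shifted by its mean for interior vertices, taken as is for boundary vertices since $u \in H^1_0(\Omega)$) is admissible in $\Htildeonenu$ is a point the paper passes over silently, but it does not change the argument.
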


\begin{proof}
The assertion is a consequence of the results stated in~\eqref{eq_stab_H1}
and established in Appendix~\ref{appendix:discrete-min}.
Indeed, since~$\Gcalh$ and~$u$ belong to~$\Pbb_{\p+1}(\tautildennu)$
and~$\Honenu$, respectively,
we have
\[
\begin{split}
\etanuPOT
& := \min_{\snuh \in \Pbb_{\p+2}(\tautildennu) \cap \Honenu}
\|\Gcalh-\nabla \snuh\|_{\omeganu} \\
& \leq \Cstabnu \min_{\snu \in \Honenu}
\|\Gcalh-\nabla \snu\|_{\omeganu}
\leq \Cstabnu \|\Gcalh-\nabla u\|_{\omeganu}.
\end{split}
\]
\end{proof}

%----------------------------------
\section{Numerical results} \label{section:numerical-results}
%----------------------------------
We present some numerical experiments.
After introducing two test cases,
in Section~\ref{subsection:nr-comparison-errors}
we compare
the accuracy of the generalised gradient $\Gfrakh(\uh)$
as opposed to the standard VE choice~$\nablah(\Pinablap\uh)$.
Next, we assess the performance of the adaptive scheme
led by the error estimator in~\eqref{global-error-estimator}.
After discussing the general structure of the adaptive algorithm in Section~\ref{subsection:adaptive-algorithm},
we show the performance of the $\h$- and $\p$-versions adaptive scheme
in Sections~\ref{subsection:nr-h-version} and~\ref{subsection:nr-p-version}, respectively.
Throughout, we shall employ the ``projected'' stabilisation~\eqref{projected-stab}.

\paragraph*{Test cases.}
We consider two test cases.
On the square domain~$\Omega_1:= (0,1)^2$, we consider
\[
    u_1(x,y):= \sin(\pi \ x) \sin(\pi \ y).
\]
On the L-shaped domain $\Omega_2 := (-1,1)^2 \setminus [0,1) \times (-1,0]$,
given~$(r,\theta)$ the usual polar coordinates at $(0,0)$,
we consider
\[
    u_2(x,y) = \widetilde u_2(r,\theta)
    := r^{\frac23} \sin \left(\frac23 \theta \right) .
\]

%---------------
\subsection{An alternative error measure} \label{subsection:nr-comparison-errors}
%---------------
Here, we are interested in comparing the performance of
three error measures for method~\eqref{VEM}.
Let~$u$ and~$\uh$ be the solutions to~\eqref{weak-formulation} and~\eqref{VEM},
and~$\Gcalh$ be the generalised gradient in~\eqref{generalised-gradient}.
Given~$\Pinablap$ as in~\eqref{Pinabla},
the first quantity we are interested in is
\begin{equation} \label{standard-error}
\Norm{\nabla u - \nablah ( \Pinablap \uh )}{\Omega},
\end{equation}
which is the standard error measure in virtual elements.
The second error measure is that defined
on the left-hand side of~\eqref{nabla:u-Gcalh};
the third one is that in~\eqref{measure:error}.
This is apparent from Tables~\ref{tab:u1p1} and \ref{tab:u1p4},
where we compare the three quantities for the exact solution~$u_1$.
We consider sequences of uniform hexahedral meshes with mesh-sizes $\frac{4}{3}2^{-j}$, $j\geq1$, and take~$\p=1$ and~$4$.

\begin{table}[htb]
  \begin{tabular}{|r|lr|lr|lr|}
    \hline
    $j$ & $\Norm{\nabla u - \nablah (\Pinablap \uh)}{\Omega}$ & order & $\Norm{\nabla u - \Gcalh}{\Omega}$ & order & $\ERR(\Omega)$ & order \\ \hline
    $1$ & $1.1892\times 10^{0}$ & -- & $1.1103\times 10^{0}$ & -- & $1.1529\times 10^{0}$ & -- \\
    $2$ & $6.4270\times 10^{-1}$ & $0.89$ & $5.9106\times 10^{-1}$ & $0.91$ & $6.2828\times 10^{-1}$ & $0.88$ \\
    $3$ & $3.3768\times 10^{-1}$ & $0.93$ & $3.0910\times 10^{-1}$ & $0.94$ & $3.1965\times 10^{-1}$ & $0.97$ \\
    $4$ & $1.7244\times 10^{-1}$ & $0.97$ & $1.5754\times 10^{-1}$ & $0.97$ & $1.6085\times 10^{-1}$ & $0.99$ \\
    $5$ & $8.7032\times 10^{-2}$ & $0.99$ & $7.9415\times 10^{-2}$ & $0.99$ & $8.0678\times 10^{-2}$ & $1.00$ \\
    $6$ & $4.3709\times 10^{-2}$ & $0.99$ & $3.9855\times 10^{-2}$ & $0.99$ & $4.0403\times 10^{-2}$ & $1.00$ \\
    $7$ & $2.1901\times 10^{-2}$ & $1.00$ & $1.9962\times 10^{-2}$ & $1.00$ & $2.0218\times 10^{-2}$ & $1.00$ \\
    $8$ & $1.0962\times 10^{-2}$ & $1.00$ & $9.9896\times 10^{-3}$ & $1.00$ & $1.0113\times 10^{-2}$ & $1.00$ \\
    $9$ & $5.4840\times 10^{-3}$ & $1.00$ & $4.9969\times 10^{-3}$ & $1.00$ & $5.0575\times 10^{-3}$ & $1.00$ \\
    $10$ & $2.7427\times 10^{-3}$ & $1.00$ & $2.4990\times 10^{-3}$ & $1.00$ & $2.5290\times 10^{-3}$ & $1.00$ \\
    \hline
  \end{tabular}
  \caption{Errors for the test case $u_1$ on a sequence of uniform hexahedral meshes with~$p=1$.}
  \label{tab:u1p1}
\end{table}

\begin{table}[htb]
  \begin{tabular}{|r|lr|lr|lr|}
    \hline
    $j$ & $\Norm{\nabla u - \nablah (\Pinablap \uh)}{\Omega}$ & order & $\Norm{\nabla u - \Gcalh}{\Omega}$ & order & $\ERR(\Omega)$ & order \\ \hline
    $1$ & $7.2068\times 10^{-3}$ & -- & $7.1145\times 10^{-3}$ & -- & $7.2193\times 10^{-3}$ & -- \\
    $2$ & $1.2802\times 10^{-3}$ & $2.49$ & $1.3543\times 10^{-3}$ & $2.39$ & $1.4225\times 10^{-3}$ & $2.34$ \\
    $3$ & $7.8260\times 10^{-5}$ & $4.03$ & $9.1314\times 10^{-5}$ & $3.89$ & $1.0082\times 10^{-4}$ & $3.82$ \\
    $4$ & $4.3659\times 10^{-6}$ & $4.16$ & $5.3705\times 10^{-6}$ & $4.09$ & $6.2357\times 10^{-6}$ & $4.02$ \\
    $5$ & $2.5425\times 10^{-7}$ & $4.10$ & $3.2919\times 10^{-7}$ & $4.03$ & $3.9151\times 10^{-7}$ & $3.99$ \\
    $6$ & $1.5360\times 10^{-8}$ & $4.05$ & $2.0383\times 10^{-8}$ & $4.01$ & $2.4519\times 10^{-8}$ & $4.00$ \\
    $7$ & $9.4506\times 10^{-10}$ & $4.02$ & $1.2696\times 10^{-9}$ & $4.00$ & $1.5350\times 10^{-9}$ & $4.00$ \\
    \hline
  \end{tabular}
  \caption{Errors for the test case $u_1$ on
  a sequence of uniform hexahedral meshes with~$p=4$.}
  \label{tab:u1p4}
\end{table}

For this example,
the computation of~$\Gcalh$ can be seen as a cheap and local
post-processing of the discrete solution
leading to a comparable approximation of the exact gradient;
for a more general claim, a theoretical comparison would be necessary.

%---------------
\subsection{The adaptive algorithm} \label{subsection:adaptive-algorithm}
%---------------
We consider the usual adaptive algorithm based on the four steps procedure
\[
\textbf{SOLVE}
\qquad \Longrightarrow \qquad
\textbf{ESTIMATE}
\qquad \Longrightarrow \qquad
\textbf{MARK}
\qquad \Longrightarrow \qquad
\textbf{REFINE}.
\]
Some details follow:
\begin{itemize}
    \item while solving the virtual element method
    for a given mesh~$\taun$ and degree of accuracy~$\p$,
    there is no need to compute the generalised gradient~$\Gcalh$,
    as it is a tool for the computation of the error estimator,
    and is computed in a subsequent step as a postprocessing of~$\uh$;
    \item while computing the local error estimator~$\etanu$ on each vertex patch~$\omeganu$,
    it is necessary to compute the generalised gradient~$\Gcalh$,
    the vertex contributions~$\etanuFLUX$ and~$\etanuPOT$,
    and the jumps of~$\Pinablap$;
    \item we use D\"orfler's marking strategy~\cite{Dorfler:1996}
    with bulk parameter $\theta=0.5$;
    \item one of the advantages of employing polytopic meshes in adaptive methods resides in easily handling hanging nodes;
    since we are not interested in coarsening the mesh,
    we only employ initial uniform Cartesian and triangular meshes;
    $\h$-refinements are performed
    by a standard splitting of each quadrilateral and triangular shape into four siblings with halved diameter, respectively.
\end{itemize}

%---------------
\subsection{The $\h$-version} \label{subsection:nr-h-version}
%---------------
We are interested in assessing numerically
the upper and lower bounds in Theorem~\ref{theorem:reliability-efficiency}.
To this aim,
given~$\eta$ and~$\ERR(\Omega)$ as in~\eqref{global-error-estimator} and~\eqref{measure:error},
we introduce the effectivity index
\begin{equation} \label{effectivity-index}
    \Ical := \frac{\eta}{\ERR(\Omega)}.
\end{equation}
In Figure~\ref{figure:h-adaptive},
we plot the effectivity index under $\h$-uniform and adaptive mesh refinements
with an initial uniform Cartesian mesh of~$4$ elements
for the test case~$u_1$
and~$12$ elements for the test case~$u_2$,
and check whether that remains constant
for degrees of accuracy~$\p=1$ and~$2$.

\begin{figure}[htb]
\centering
\includegraphics[width=0.49\textwidth]{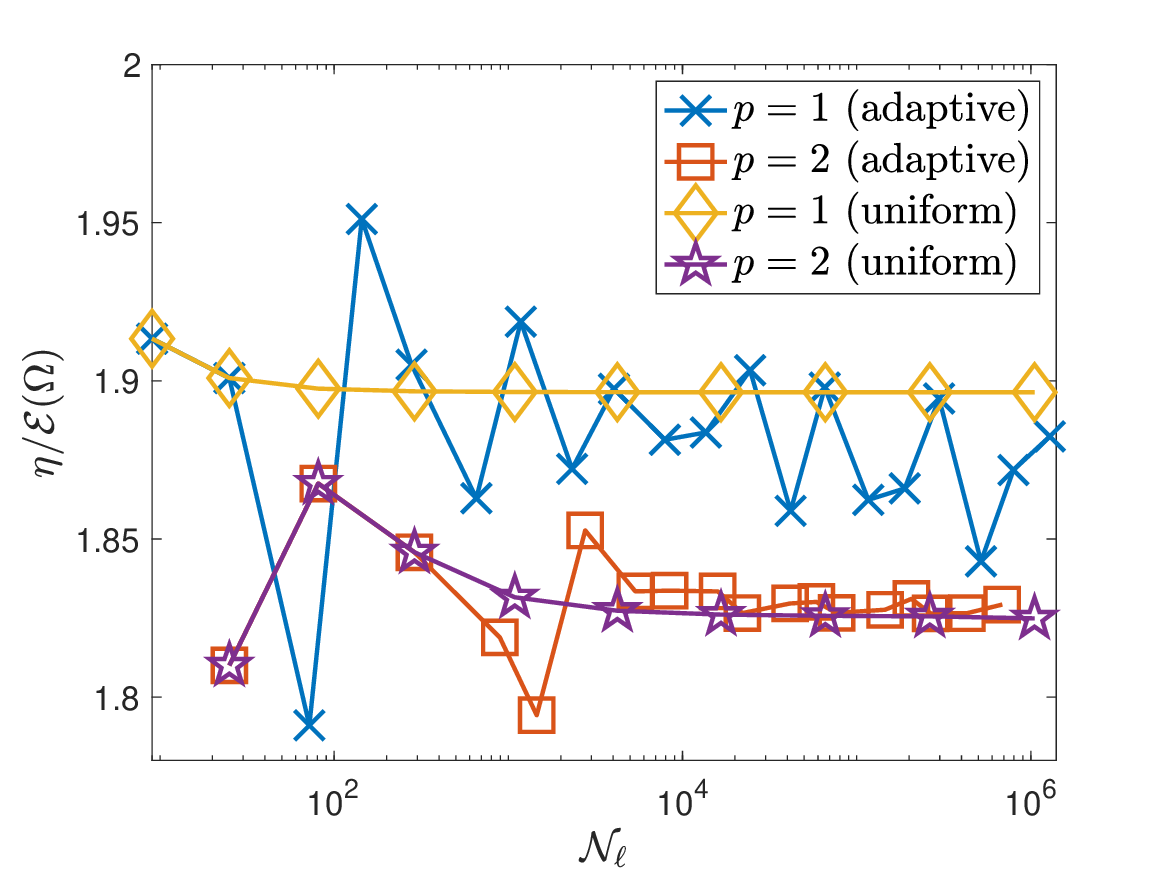}
\includegraphics[width=0.49\textwidth]{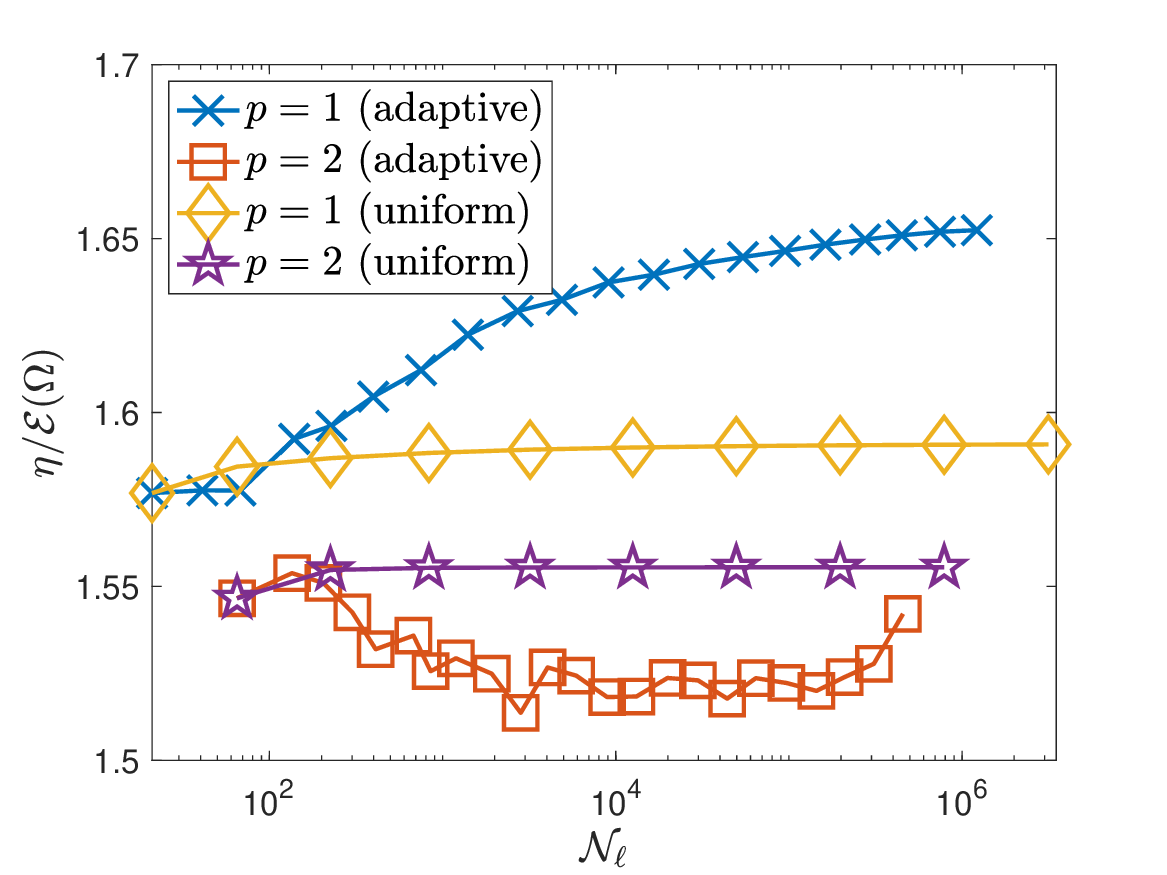}
\caption{Effectivity index~$\Ical$ in~\eqref{effectivity-index}
versus the number of degrees of freedom $\mathcal N_\ell$
for the test cases~$u_1$ (left panel)
and~$u_2$ (right panel)
under uniform and adaptive $\h$-refinements.
We employ an initial uniform Cartesian mesh of 4 or 12 elements, respectively,
and~$\p=1$ and~$2$.}
\label{figure:h-adaptive}
\end{figure}

From Figure~\ref{figure:h-adaptive},
it is apparent that the effectivity index remains in the bounded
interval $(1.5,2)$.

%---------------
\subsection{The $\p$-version} \label{subsection:nr-p-version}
%---------------
We check the behaviour of the effectivity index~$\Ical$ under uniform $\p$-refinements, say, up to~$\p=7$,
on a uniform Cartesian, a uniform triangular, and a uniform hexagonal mesh.
We pick the exact solution~$u_2$
and show the results in Figure~\ref{figure:p-uniform}.

\begin{figure}[htb]
\centering
\includegraphics[width=0.50\textwidth]{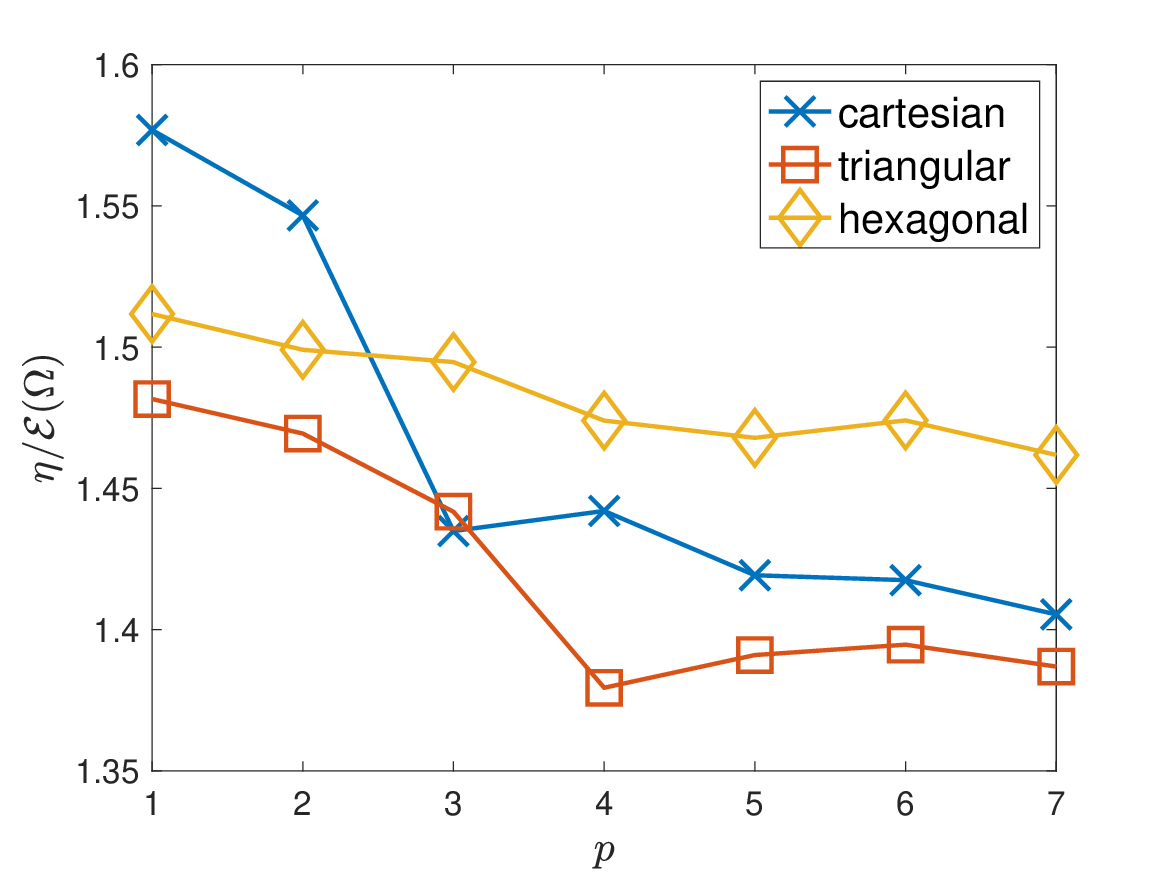}
\caption{Effectivity index~$\Ical$ in~\eqref{effectivity-index}
for the test case~$u_2$ under uniform $\p$-refinements.
%We employ a uniform Cartesian (left panel)
%and a uniform triangular (right panel) meshes
%of~$4$ and~$8$ elements respectively.
}
\label{figure:p-uniform}
\end{figure}

Figure~\ref{figure:p-uniform} illustrates the $\p$-robustness
of the proposed error estimator:
the effectivity index remains in the bounded interval $(1.35,1.6)$.

%----------------------------------
\section{Conclusions} \label{section:conclusions}
%----------------------------------

We proposed a generalised gradient formulation for the standard nodal virtual element method in two dimensions.
On the one hand, the employed generalised gradient can be seen as a post-processing of the discrete solution,
which provided us with an alternative computable
approximation of the exact solution.

On the other hand, the generalised gradient formulation allowed us to improve the state-of-the-art of a posteriori error estimates in virtual elements
and other polytopic methods in general,
yet keeping in mind that a modified error measure is considered.
In fact, standard a posteriori error bounds on general meshes for arbitrary order degree of accuracy~$\p$
typically involve constants depending on~$\p$ and the choice of the stabilisations;
the reason for this is to be sought both in the choice of the computable error measure and the error estimator.
By changing these two quantities,
for the first time in the literature on polytopic elements
we ended up with robust a posteriori error estimates.
The proposed error estimator contains terms and constants
that extend those appearing while using other nonconforming methods,
such as the discontinuous Galerkin method,
on standard meshes.

%%%%%%%%
\paragraph*{Acknowledgments.}
%%%%%%%%
LM has been partially funded by MUR (PRIN2022 research grant n. 202292JW3F).
LM was also partially supported by the European Union (ERC Synergy, NEMESIS, project number 101115663).
Views and opinions expressed are however those of the author(s) only and do not necessarily reflect
those of the European Union or the European Research Council Executive Agency. 
LM is member of the Gruppo Nazionale Calcolo Scientifico-Istituto Nazionale di Alta Matematica (GNCS-INdAM).

%----------------------------
{\footnotesize
\bibliography{bibliogr}
}
\bibliographystyle{plain}
%----------------------------

%%%%%%%%%%%%%%%%%
\appendix

%----------------------------------
\section{Discrete stable minimisation}
\label{appendix:discrete-min}
%----------------------------------
Given a vertex~$\nu$ in~$\Vcaln$,
and data $\bG_h$ and $r_h$
in $\RT_\p(\tautildennu)$ and $\Pbb_{\p}(\tautildennu)$,
we investigate the link between the discrete
\begin{equation}
m^{\rm g}_h
:=
\min_{v_h \in \Pbb_{\p+2}(\tautildennu) \cap \Honenu}
\|\bG_h-\nabla v_h\|_{\omeganu},
\qquad
m^{\rm d}_h
:=
\min_{\substack{
\btau_h \in \RT_{\p}(\tautildennu) \cap \bH(\div,\omeganu)
\\
\div \btau_h = r_h
}}
\|\btau_h+\bG_h\|_{\omeganu}
\end{equation}
and continuous minimisation problems
\begin{equation}
m^{\rm g}
:=
\min_{v \in \Honenu} \|\bG_h-\nabla v\|_{\omeganu},
\qquad
m^{\rm d}
:=
\min_{\substack{\btau \in \bH(\div,\omeganu) \\ \div \btau = r_h}}
\|\btau+\bG_h\|_{\omeganu}.
\end{equation}
While it is clear that $m^{\rm g} \leq m^{\rm g}_h$
and $m^{\rm d} \leq m^{\rm d}_h$, the goal of this appendix
is to show the converse inequality up to a constant:
in Section~\ref{subappendix:p-dep}
the inequality is shown for general meshes
with constants possibly depending on~$\p$;
in Section~\ref{subappendix:p-indep},
under more technical assumptions on the mesh,
we prove that the constant does not depend on~$\p$.

%---------------
\subsection{Bounds for fully general meshes} \label{subappendix:p-dep}
%---------------
\begin{thm}[Non-$\p$-robust stable minimisation]
We have
\[
m^{\rm g}_h \leq C(\p,\gamma) m^{\rm g},
\qquad\qquad\qquad
m^{\rm d}_h \leq C(\p,\gamma) m^{\rm d},
\]
where the constants are independent of the data $r_h$ and $\bG_h$.
\end{thm}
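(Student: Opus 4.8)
The plan is to treat the two estimates ($H^1$-type and $\bH(\div)$-type) separately but with the same broad strategy: given a continuous minimiser, construct a discrete quasi-interpolant over the subtriangulation $\tautildennu$ that preserves the relevant structural constraint (zero mean / prescribed divergence / zero trace on boundary edges) and is stable in the natural norm, with a stability constant that we allow to depend on $\p$ and $\gamma$. For the potential term, let $v \in \Htildeonenu$ achieve $m^{\rm g}$. First I would observe that $\bG_h - \nabla v$ lives over the shape-regular simplicial mesh $\tautildennu$, so I can apply a standard (Scott--Zhang-type or Clément-type) quasi-interpolation operator $I_h : H^1(\omeganu) \to \Pbb_{\p+2}(\tautildennu)$, then correct its mean value (interior vertices) or its trace on $\EcalnuB$ (boundary vertices) by subtracting/adding a fixed low-order function, landing in $\Pbb_{\p+2}(\tautildennu) \cap \Htildeonenu$. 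The key estimate is $\|\nabla (v - I_h v)\|_{\omeganu} \le C(\p,\gamma)\, \inf_{w_h} \|\nabla(v - w_h)\|_{\omeganu}$ where the infimum is over discrete functions — but since $\bG_h = \nabla(\text{something in }\Pbb_{\p+2})$ is not guaranteed, one should instead argue: $\|\bG_h - \nabla I_h v\|_{\omeganu} \le \|\bG_h - \nabla v\|_{\omeganu} + \|\nabla(v - I_h v)\|_{\omeganu}$, and bound the second term by $C(\p,\gamma)\|\nabla v\|_{\omeganu}$ or, better, by a local best-approximation which one relates back to $m^{\rm g}$ using that $v$ is itself the continuous minimiser (so $\nabla v$ is the $L^2$-projection of $\bG_h$ onto gradients, giving $\|\nabla v\|_{\omeganu} \le \|\bG_h\|_{\omeganu}$, which is not quite $m^{\rm g}$ — so the cleaner route is a local interpolation error bound on each simplex controlled by $\|\bG_h - \nabla v\|$ via an inverse inequality, since $\nabla v$ restricted to a simplex can be compared to $\bG_h$ which is polynomial there).

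For the flux term the strategy is analogous but uses a commuting-diagram interpolation. Let $\btau \in \bH(\div,\omeganu)$ with $\div\btau = r_h$ achieve $m^{\rm d}$. I would invoke the existence of a Raviart--Thomas (Fortin-type) interpolation operator $\Pi^{\RT}_h : \bH(\div,\omeganu) \cap (\text{enough regularity}) \to \RT_\p(\tautildennu) \cap \bH(\div,\omeganu)$ satisfying the commuting property $\div \Pi^{\RT}_h \btau = P_h \div \btau$ where $P_h$ is the $L^2$-projection onto $\Pbb_\p(\tautildennu)$; since $\div\btau = r_h \in \Pbb_\p(\tautildennu)$ already, $P_h r_h = r_h$, so $\div \Pi^{\RT}_h\btau = r_h$ and the constraint is preserved. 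Then $\|\bG_h + \Pi^{\RT}_h\btau\|_{\omeganu} \le \|\bG_h + \btau\|_{\omeganu} + \|\btau - \Pi^{\RT}_h\btau\|_{\omeganu}$, and the second term is controlled on each simplex by $C(\p,\gamma)\|\btau\|_{L^2(T)}$ after an inverse estimate — but to get $m^{\rm d}$ on the right, I would instead note that on each simplex $\btau + \bG_h$ is the object we measure, write $\btau - \Pi^{\RT}_h\btau = (\btau + \bG_h) - \Pi^{\RT}_h(\btau + \bG_h)$ using that $\Pi^{\RT}_h$ fixes $\RT_\p$ functions (so $\Pi^{\RT}_h \bG_h = \bG_h$), and conclude $\|\btau - \Pi^{\RT}_h\btau\|_{\omeganu} \le C(\p,\gamma)\|\btau + \bG_h\|_{\omeganu} = C(\p,\gamma) m^{\rm d}$. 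This last manipulation is the crux that turns a crude stability bound into the sharp comparison.

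The main obstacle I anticipate is the regularity required to apply the classical RT and $H^1$ interpolation operators: the continuous minimiser $\btau$ (resp.\ $v$) need not be smooth enough for the nodal-type operators, so I would either use regularized (Clément/Schöberl-type) versions that only need $\bL^2$/$H^1$ data, or exploit a preliminary regularity step — but since $\div\btau = r_h$ is a smooth polynomial and $\nabla v$ appears only through $L^2$ norms, elliptic regularity on the patch plus the standard machinery should suffice. A secondary subtlety is the boundary-vertex case, where $\Htildeonenu$ imposes a vanishing trace on $\EcalnuB$ rather than a mean-value condition; there I would rely on the interpolation operator preserving homogeneous boundary values on those edges (Scott--Zhang with the standard choice of dual functionals on boundary facets), so no correction term is needed. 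Throughout, all constants are allowed to blow up polynomially in $\p$ — quantifying that blow-up is deferred to the $\p$-robust refinement in the next subsection — so the proof here is deliberately permissive and reduces entirely to the shape-regularity of $\tautildennu$, which was established in Section~\ref{subsection:meshes}.
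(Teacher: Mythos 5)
Your strategy (interpolate the continuous minimiser into the discrete constrained space and absorb the interpolation error) has a genuine gap at exactly the step you call the crux. For the flux term you write $\btau-\Pi_h^{\rm RT}\btau=(\btau+\bG_h)-\Pi_h^{\rm RT}(\btau+\bG_h)$ using $\Pi_h^{\rm RT}\bG_h=\bG_h$; but $\bG_h$ is only an element of the \emph{broken} space $\RT_\p(\tautildennu)$ (this is the whole point of the estimator: the generalised gradient is not $\bH(\div,\omeganu)$-conforming across element interfaces), while any interpolation operator mapping into $\RT_\p(\tautildennu)\cap\bH(\div,\omeganu)$ can only reproduce \emph{conforming} discrete fields. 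Hence $\Pi_h^{\rm RT}\bG_h\neq\bG_h$ in general, and moreover $\btau+\bG_h$ is merely $\bL^2$ (the continuous minimiser is only in $\bH(\div,\omeganu)$ and $\bG_h$ is broken), so the classical commuting interpolant is not even defined on it; the regularised operators you invoke as a fallback are defined on $\bL^2$ data but likewise do not fix broken fields, so the identity still fails. The same obstruction kills the potential-type bound: Scott--Zhang invariance lets you write $v-I_hv=(v-g_h)-I_h(v-g_h)$ only for \emph{conforming} discrete $g_h$, and there is no such $g_h$ with $\nabla g_h=\bG_h$. Without this invariance, your interpolation estimates only yield bounds of the form $m^{\rm g}_h\le \|\bG_h-\nabla v\|_{\omeganu}+C(\p,\gamma)\|\nabla v\|_{\omeganu}$ (and analogously with $\|\btau\|_{\omeganu}$), which cannot be converted into the theorem: take $\bG_h$ very close to a conforming discrete gradient but with large norm, then $m^{\rm g}$ is tiny while $\|\nabla v\|_{\omeganu}\approx\|\bG_h\|_{\omeganu}$ is large. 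Controlling the approximation of the continuous minimiser by the \emph{distance} $m^{\rm g}$ or $m^{\rm d}$, rather than by its norm, is precisely the nontrivial content of the statement, and standard quasi-interpolation stability does not provide it.

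The paper avoids the construction altogether: for fixed $r_h$, the maps sending the datum $\bG_h\in\RT_\p(\tautildennu)$ to the continuous and discrete residuals ($\bG_h-\nabla s$, $\bG_h-\nabla s_h$, and their divergence-constrained analogues) are linear, so $\bG_h\mapsto m^{\rm g}$ and $\bG_h\mapsto m^{\rm g}_h$ are seminorms on a finite-dimensional space; one checks they have the same kernel (if the continuous residual vanishes then $\bG_h$ is a conforming discrete gradient, so the discrete residual vanishes too), and equivalence of seminorms with equal kernel on a finite-dimensional space gives $m^{\rm g}_h\le C\,m^{\rm g}$, with the dependence reduced to $(\p,\gamma)$ by Piola-type scaling to reference patches. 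This soft argument is exactly what makes the non-$\p$-robust version cheap; a constructive argument of the kind you attempt is essentially the $\p$-robust broken polynomial extension machinery of Braess--Pillwein--Sch\"oberl and Ern--Vohral\'ik, which is what the paper defers to Appendix~\ref{subappendix:p-indep} under Assumption~\ref{assumption_faces}. If you want to keep a constructive proof here, you would need such a stable discrete extension result rather than Scott--Zhang or Fortin interpolation of the continuous minimiser.
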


\begin{proof}
First, we establish the existence of generic constants $C^{\rm g}$ and $C^{\rm d}$
such that $m^{\rm g}_h \leq C^{\rm g} m^{\rm g}$
and $m^{\rm d}_h \leq C^{\rm d} m^{\rm d}$.
We shall investigate the scaling properties in a second step.
We only focus on $C^{\rm g}$;
the estimate involving $C^{\rm d}$ is established similarly.

Given~$s_h$ and~$s$ the discrete and continuous minimisers,
the operators
\[A^{\rm g}_h: \bG_h \in \RT_{\p}(\tautildennu) \to \Pbb_{\p+1}(\tautildennu) \ni \bG_h-\nabla s_h
\]
and
\[A^{\rm g}: \bG_h \in \RT_{\p}(\tautildennu) \to \bL^2(\tautildennu) \ni \bG_h-\nabla s
\]
are linear.

Hence, the following applications
are seminorms on $\RT_{\p}(\tautildennu)$:
\begin{equation*}
\bG_h \to \|A^{\rm g}(\bG_h)\|_{\omeganu},
\qquad\qquad
\bG_h \to \|A^{\rm g}_h(\bG_h)\|_{\omeganu}.
\end{equation*}
Since the space $\RT_{\p}(\tautildennu)$ is finite dimensional,
if we can show that $A^{\rm g}_h(\bG_h) = 0$ whenever $A^{\rm g}(\bG_h) = 0$,
then these seminorms have the same kernel.
Thus, since all seminorms with the same kernel on finite dimensional spaces are equivalent,
we establish that there exists a constant~$C^{\rm g}$ such that
\begin{equation}
\label{tmp_norm_equiv}
\|A^{\rm g}(\bG_h)\|_{\omeganu} \leq C^{\rm g}\|A^{\rm g}_h(\bG_h)\|_{\omeganu}.
\end{equation}
Estimate~\eqref{tmp_norm_equiv} would then imply
that $m^{\rm g} \leq C^{\rm g} m^{\rm g}_h$
for all~$\bG_h$ in $\Pbb_{\p+1}(\tautildennu)$, which concludes the argument.

Therefore, we consider~$\bG_h$ in $\RT_{\p}(\tautildennu)$ such that $A^{\rm g}(\bG_h) = 0$.
We have that~$\bG_h = \nabla s$ in~$\Honenu$ for the continuous minimiser~$s$,
so that $\bG_h$ belongs to~$\nabla(\Pbb_{\p+2}(\tautildennu) \cap \Honenu)$.
It follows that $\bG_h = \nabla s_h$ for the discrete minimiser too,
leading to $A^{\rm g}_h(\bG_h) = 0$.

The constants $C^{\rm g}$ and $C^{\rm d}$ obtained above depend on the space
$\Pbb_{\p+1}(\tautildennu)$, meaning that
they depend on~$\p$ and~$\tautildennu$.
We can show that the dependence on~$\tautildennu$
occurs only through~$\gamma$ by using
standard scaling arguments involving the Piola map and reference patches.
\end{proof}

%---------------
\subsection{Polynomial-degree-robust bounds} \label{subappendix:p-indep}
%---------------
Here, under Assumption~\ref{assumption_faces},
we show that the discrete stable minimisation property
holds true with a constant independent of~$\p$.
We start by considering the case of interior vertices.

\begin{lem}[$\p$-robust stable minimisation for interior vertices]
\label{lemma:discrete_stab_interior}
Let Assumption~\ref{assumption_faces} hold true.
Assume that~$\nu$ is an interior vertex
and~\eqref{assumption_interior_faces} is satisfied.
Then, we have
\[
m^{\rm g} \leq C(\gamma) m^{\rm g}_h,
\qquad\qquad
m^{\rm d} \leq C(\gamma) m^{\rm d}_h.
\]
\end{lem}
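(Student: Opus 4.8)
The plan is to establish the full $\p$-robust equivalence of the discrete and continuous minima on the interior patch, which contains the asserted bounds. Since one direction, $m^{\rm g}\le m^{\rm g}_h$ and $m^{\rm d}\le m^{\rm d}_h$, is immediate from the inclusion of the admissible sets, it suffices to prove the matching estimates $m^{\rm g}_h\le C(\gamma)\,m^{\rm g}$ and $m^{\rm d}_h\le C(\gamma)\,m^{\rm d}$ with a constant that does not deteriorate as $\p\to\infty$. Both reduce to a $\p$-robust stability statement on a reference configuration, and the role of Assumption~\ref{assumption_faces} is precisely to guarantee that this configuration is of a standard type. First I would use shape-regularity: since every triangle of $\tautildennu$ incident to $\nu$ is shape-regular with parameter $\gamma$, the angle it subtends at $\nu$ is bounded below by $c(\gamma)$, so the number of elements of $\taun$ meeting $\nu$, and hence the number of triangles of $\tautildennu$, is bounded in terms of $\gamma$ alone. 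Combined with \eqref{assumption_interior_faces}, which forces neighbouring elements to meet along a single edge, this shows that $\omeganu$ is simply connected and that its interelement interfaces are single edges.

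For the flux bound I would pass to a reference patch $\widehat\omega$ of the same combinatorial type via a piecewise affine homeomorphism whose elementwise Jacobians are controlled by $\gamma$. Under the contravariant Piola transform this map preserves $\bH(\div,\omeganu)$-conformity and the space $\RT_\p(\tautildennu)$, sends the constraint $\div\btau_h=r_h$ to a polynomial divergence constraint on $\widehat\omega$, and distorts the $\bL^2$-norms only by $\gamma$-dependent factors; it therefore suffices to prove $\widehat m^{\rm d}_h\le C\,\widehat m^{\rm d}$ on $\widehat\omega$. There the estimate is the classical $\p$-robust stable flux equilibration on a vertex patch with natural boundary conditions: starting from the continuous minimiser, one builds an $\RT_\p$ field with the same divergence and comparable norm by gluing the $\p$-robust right inverse of the divergence on each reference simplex across the single interfaces, in the spirit of \cite{Braess-Pillwein-Schoeberl:2009, Ern-Vohralik:2015}. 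Since the patch carries $O(1)$ elements and natural boundary conditions impose no global compatibility constraint, the construction costs a product of boundedly many $\p$-robust constants, yielding $C=C(\gamma)$.

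The potential bound follows by the same reduction, now with the covariant (gradient) pullback, which preserves $H^1$-conformity, the per-simplex degree $\p+2$, the zero-average constraint of $\Htildeonenu$ up to $\gamma$-scaling, and the $\bL^2$-norm of the gradient. On the reference patch the $H^1$ analogue of the above — a $\p$-robust stable potential reconstruction assembled from $\p$-robust trace liftings across the single interfaces and closed by a broken Poincar\'e inequality on the simply connected patch — gives $\widehat m^{\rm g}_h\le C\,\widehat m^{\rm g}$.

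The step I expect to be hardest is the $\p$-independence of these reference-patch estimates, that is, the existence of a $\p$-robust right inverse of the divergence and of $\p$-robust trace liftings. A compactness argument over the finitely many (up to $\gamma$) reference configurations only yields a $\p$-dependent constant, exactly as in Section~\ref{subappendix:p-dep}; removing the dependence genuinely requires the polynomial extension machinery, and this is where \eqref{assumption_interior_faces} is indispensable. Single-facet sharing makes every interface a single edge, so the extension and gluing operators act across one facet at a time and remain $\p$-robust, whereas the forbidden multi-facet configurations in the right panel of Figure~\ref{figure_configurations_faces} would split an interface into several edges and reintroduce a dependence on $\p$. The remaining verifications — control of the transformation distortion, of the cardinality of the patch, and of the Poincar\'e and inf--sup constants — are routine scaling arguments depending only on $\gamma$.
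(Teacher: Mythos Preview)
Your proposal correctly identifies that the substantive content is the reverse inequality $m^{\rm g}_h\le C(\gamma)\,m^{\rm g}$ (and likewise for~$m^{\rm d}$), and invoking the Braess--Pillwein--Sch\"oberl sweep is the right family of ideas. However, you have glossed over the one point that makes this lemma non-trivial and to which the paper's proof is entirely devoted.

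The patch $\tautildennu$ is \emph{not} a standard vertex patch: only the triangles obtained by joining~$\nu$ to adjacent vertices actually contain~$\nu$; each polygonal element $\E\in\taunnu$ contributes further triangles around its centroid~$\xbfE$ that do \emph{not} touch~$\nu$. Hence the ``classical $\p$-robust stable flux equilibration on a vertex patch'' you invoke does not apply as stated, and mapping to a reference patch of the same combinatorial type merely reproduces the difficulty there. The paper's observation is that the sweep arguments of~\cite{Ern-Vohralik:2020,ChaumontFrelet-Vohralik:2022,Vohralik:2024} do not actually require all triangles to share a vertex; they require an \emph{enumeration} $T_1,\dots,T_N$ satisfying three combinatorial conditions (each new triangle shares at least one and at most two faces with earlier ones, and whenever it shares two, all triangles around the common vertex are already listed). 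The paper then \emph{constructs} such an enumeration: loop counter-clockwise around~$\nu$ first, then around each~$\xbfE$ in turn. Assumption~\ref{assumption_faces} enters precisely here: if two polygons shared two edges, the last triangle of one centroid loop would acquire a third already-enumerated neighbour and the enumeration would fail. Your reading of the assumption (``single-facet sharing makes every interface a single edge, so the extension operators act across one facet at a time'') misses the point --- triangles always meet along a single edge; the assumption is what allows the enumeration to satisfy the at-most-two-faces condition.

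In short, your outline lacks the identification and explicit construction of the admissible enumeration, which is the entire technical content of the paper's proof; once that is in hand, the result follows by citing~\cite{Vohralik:2024} for the gradient case and~\cite{ChaumontFrelet-Vohralik:2022,Ern-Vohralik:2020} (and the forthcoming~\cite{Demkowicz-Vorhalik:2024}) for the divergence-constrained case.
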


\begin{figure}
\centering
\begin{tikzpicture}[scale=3]

\coordinate (a) at (0,0);

\coordinate (a0) at ($(a)+ ( 10:0.9)$);
\coordinate (a1) at ($(a)+ (100:1.1)$);
\coordinate (a2) at ($(a)+ (200:0.8)$);
\coordinate (a3) at ($(a)+ (300:0.7)$);

\coordinate (b0) at ($(a) + ( 60:1.0)$);

\coordinate (a01) at ($(b0)+(  5:0.7)$);
\coordinate (a02) at ($(b0)+( 90:0.7)$);

\coordinate (b1) at ($(a) + (140:0.8)$);

\coordinate (a11) at ($(b1) + (160:0.8)$);

\coordinate (b2) at ($(a) + (-110:1.0)$);

\coordinate (a21) at ($(b2) + (160:0.9)$);
\coordinate (a22) at ($(b2) + (200:0.5)$);
\coordinate (a23) at ($(b2) + (250:1.0)$);
\coordinate (a24) at ($(b2) + (290:0.5)$);
\coordinate (a25) at ($(b2) + (330:0.8)$);

\coordinate (b3) at ($(a) + ( -30:0.7)$);

\coordinate (a31) at ($(b3) + (- 80:0.5)$);
\coordinate (a32) at ($(b3) + (- 10:0.6)$);

%%%%%%%%%%%%%%%%%%%%%%%%%%%%%%%%%%%%%%%%%%%%%%%%%%%%%

\draw[ultra thick] (a) -- (a0);
\draw[ultra thick] (a) -- (a1);
\draw[ultra thick] (a) -- (a2);
\draw[ultra thick] (a) -- (a3);

%%%%%%%%%%%%%%%%%%%%%%%%%%%%%%%%%%%%%%%%%%%%%%%%%%%%%

\draw[dashed] (b0) -- (a);
\draw[dashed] (b0) -- (a0);
\draw[dashed] (b0) -- (a01);
\draw[dashed] (b0) -- (a02);
\draw[dashed] (b0) -- (a1);

\draw[ultra thick] (a0) -- (a01) -- (a02) -- (a1);

%%%%%%%%%%%%%%%%%%%%%%%%%%%%%%%%%%%%%%%%%%%%%%%%%%%%%

\draw[dashed] (b1) -- (a);
\draw[dashed] (b1) -- (a1);
\draw[dashed] (b1) -- (a11);
\draw[dashed] (b1) -- (a2);

\draw[ultra thick] (a1) -- (a11) -- (a2);

%%%%%%%%%%%%%%%%%%%%%%%%%%%%%%%%%%%%%%%%%%%%%%%%%%%%%

\draw[dashed] (b2) -- (a);
\draw[dashed] (b2) -- (a2);
\draw[dashed] (b2) -- (a21);
\draw[dashed] (b2) -- (a22);
\draw[dashed] (b2) -- (a23);
\draw[dashed] (b2) -- (a24);
\draw[dashed] (b2) -- (a25);
\draw[dashed] (b2) -- (a3);

\draw[ultra thick] (a2) -- (a21) -- (a22) -- (a23) -- (a24) -- (a25) -- (a3);

%%%%%%%%%%%%%%%%%%%%%%%%%%%%%%%%%%%%%%%%%%%%%%%%%%%%%

\draw[dashed] (b3) -- (a);
\draw[dashed] (b3) -- (a3);
\draw[dashed] (b3) -- (a31);
\draw[dashed] (b3) -- (a32);
\draw[dashed] (b3) -- (a0);

\draw[ultra thick] (a3) -- (a31) -- (a32) -- (a0);

%%%%%%%%%%%%%%%%%%%%%%%%%%%%%%%%%%%%%%%%%%%%%%%%%%%%%

\draw ($0.33*(a)+0.33*(b0)+0.33*(a0)$) node {1};
\draw ($0.33*(a)+0.33*(b0)+0.33*(a1)$) node {2};
\draw ($0.33*(a)+0.33*(b1)+0.33*(a1)$) node {3};
\draw ($0.33*(a)+0.33*(b1)+0.33*(a2)$) node {4};
\draw ($0.33*(a)+0.33*(b2)+0.33*(a2)$) node {5};
\draw ($0.33*(a)+0.33*(b2)+0.33*(a3)$) node {6};
\draw ($0.33*(a)+0.33*(b3)+0.33*(a3)$) node {7};
\draw ($0.33*(a)+0.33*(b3)+0.33*(a0)$) node {8};

\draw ($0.33*(a0) +0.33*(b0)+0.33*(a01)$) node  {9};
\draw ($0.33*(a01)+0.33*(b0)+0.33*(a02)$) node {10};
\draw ($0.33*(a02)+0.33*(b0)+0.33*(a1) $) node {11};

\draw ($0.33*(a1) +0.33*(b1)+0.33*(a11)$) node {12};
\draw ($0.33*(a11)+0.33*(b1)+0.33*(a2) $) node {13};

\draw ($0.33*(a2) +0.33*(b2)+0.33*(a21)$) node {14};
\draw ($0.33*(a21)+0.33*(b2)+0.33*(a22)$) node {15};
\draw ($0.33*(a22)+0.33*(b2)+0.33*(a23)$) node {16};
\draw ($0.33*(a23)+0.33*(b2)+0.33*(a24)$) node {17};
\draw ($0.33*(a24)+0.33*(b2)+0.33*(a25)$) node {18};
\draw ($0.33*(a25)+0.33*(b2)+0.33*(a3) $) node {19};

\draw ($0.33*(a3) +0.33*(b3)+0.33*(a31)$) node {20};
\draw ($0.33*(a31)+0.33*(b3)+0.33*(a32)$) node {21};
\draw ($0.33*(a32)+0.33*(b3)+0.33*(a0) $) node {22};

\draw[-{Straight Barb[black,line width=1pt,width=6pt,length=4pt]}]%
($(a) + (20:0.2)$) arc (20:355:0.2);
\draw[-{Straight Barb[black,line width=1pt,width=6pt,length=4pt]}]%
($(b0) + (-50:0.2)$) arc (-45:150:0.2);
\draw[-{Straight Barb[black,line width=1pt,width=6pt,length=4pt]}]%
($(b1) + ( 70:0.2)$) arc (70:230:0.2);
\draw[-{Straight Barb[black,line width=1pt,width=6pt,length=4pt]}]%
($(b2) + (140:0.2)$) arc (140:360:0.2);
\draw[-{Straight Barb[black,line width=1pt,width=6pt,length=4pt]}]%
($(b3) + (260:0.2)$) arc (260:400:0.2);

\fill (a) circle (0.05);

\fill (a0) circle (0.05);
\fill (a1) circle (0.05);
\fill (a2) circle (0.05);
\fill (a3) circle (0.05);

\fill (a01) circle (0.05);
\fill (a02) circle (0.05);

\fill (a11) circle (0.05);

\fill (a21) circle (0.05);
\fill (a22) circle (0.05);
\fill (a23) circle (0.05);
\fill (a24) circle (0.05);
\fill (a25) circle (0.05);

\fill (a31) circle (0.05);
\fill (a32) circle (0.05);

\end{tikzpicture}
\caption{Enumeration in an interior vertex patch}
\label{figure_enumeration}
\end{figure}
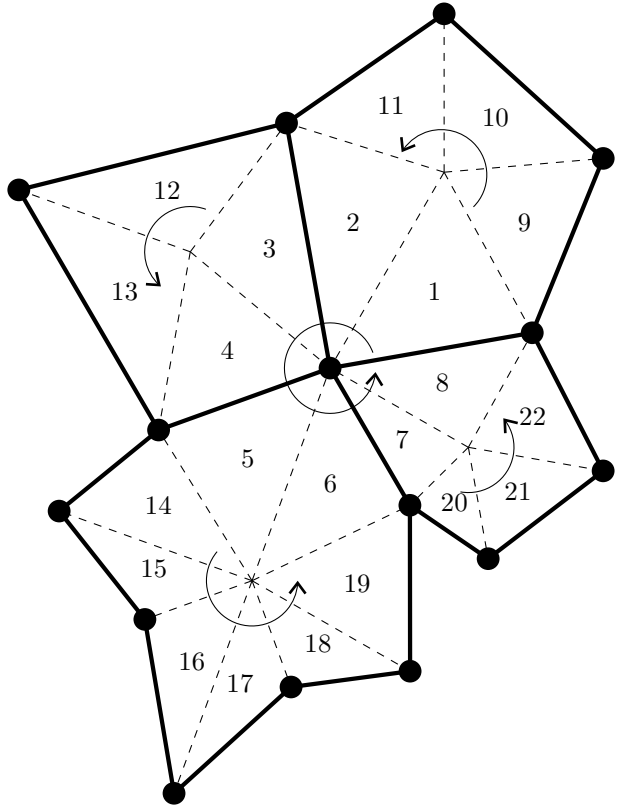

\begin{proof}
For patches consisting of simplices sharing a single vertex,
this result is now standard;
see, e.g., \cite{Braess-Pillwein-Schoeberl:2009,ChaumontFrelet-Vohralik:2022,Ern-Vohralik:2020}.
However, here, as the polygons are further broken down into additional triangles,
the patches we consider do not enter this framework:
some triangles do not have $\nu$ as a vertex.
Nevertheless, a careful read of~\cite{ChaumontFrelet-Vohralik:2022,Ern-Vohralik:2020}
reveals that the proof should still go through for the above patches,
as long as we can provide a suitable ordering (or enumeration) of the triangles in the patch. 

In fact, the properties required from the enumeration have recently been explicitly stated in
\cite[Definition B.1]{Vohralik:2024}.
Under the assumption that a suitable enumeration is available,
the bound $m^{\rm g} \leq C(\gamma) m^{\rm g}_h$
is explicitly established in
\cite[Theorem D.1]{Vohralik:2024}.
The corresponding bound for the divergence-constrained
problem can similarly be obtained following \cite{ChaumontFrelet-Vohralik:2022,Ern-Vohralik:2020} with the aforementioned enumeration,
and the details should appear shortly in~\cite{Demkowicz-Vorhalik:2024}.

Therefore, we have to produce an enumeration of the patch triangles
meeting the requirements of \cite[Definition B.1]{Vohralik:2024},
i.e., list the elements as
$\tautildennu = \{T_1,\dots,T_N\}$ in such a way that:
\begin{itemize}
\item[a)] for $1 < j \leq N$, the triangle $T_j$ shares at least one facet with an already
enumerated triangle,
i.e., there exists $T_\ell$, $1 \leq \ell < j$ such that
$\partial T_j \cap \partial T_\ell \neq \emptyset$;
\item[b)] for $1 \leq j \leq N$, the triangle $T_j$ shares at most two faces with already
enumerated triangles;
\item[c)] for $1 \leq j \leq N$, if $T_j$ shares two faces with, say $T_\ell$ and $T_{\ell'}$,
$1 \leq \ell < \ell' \leq j$, then all the elements sharing the vertex common to $T_j$, $T_\ell$
and $T_\ell'$ are already enumerated.
\end{itemize}

For the standard case where all the triangles share the same vertex,
i.e., as in \cite{Braess-Pillwein-Schoeberl:2009},
such an enumeration can be trivially constructed by looping around the vertex.
Here, the enumeration can be obtained as follows,
see Figure~\ref{figure_enumeration} for an illustration:
first, we enumerate the triangles sharing the vertex~$\nu$ counterclockwise;
we then go through the centroid~$\xbfE$
of the polygonal elements~$\E$ in~$\taunnu$ counterclockwise.
Around each $\xbfE$, there is a closed patch of triangles sharing~$\xbfE$ and covering~$\E$,
and two of them, say $T_{\ell}$ and $T_{\ell+1}$,
have already been enumerated when looping over~$\nu$.
Besides, due to~\eqref{assumption_interior_faces},
the remaining triangles around~$\xbfE$ do not share faces
with any triangle from another polygon~$\E'$ in $\taunnu$.
We can then enumerate those triangles
by completing the loop around~$\xbfE$ counterclockwise,
i.e., going from $T_\ell$ to $T_{\ell+1}$ by crossing faces.
\end{proof}

\begin{remark}
If the triangles~$T_9$ and~$T_{22}$ shared a face in Figure~\ref{figure_enumeration},
then the enumeration would not be valid anymore.
Indeed, all the neighbours of $T_{22}$ would have appeared sooner in the enumeration,
hence violating point c) in the proof above.
This explains why we invoke~\eqref{assumption_interior_faces}
to carry out the proof.
Notwithstanding, (i) the presence of elements~$\E$ sharing multiple faces
does not exclude the existence of a suitable enumeration satisfying \cite[Definition B.1]{Vohralik:2024},
even though\, admittedly, we were not able to provide a generic construction;
moreover, (ii) the lack of a suitable enumeration does not imply that $\p$-robustness does not hold in general.
In other words, the question of $\p$-robustness on general patches is still open.
We deem that there is no severe obstruction to its proof
and that it could be obtained at the price of more technicalities in the
construction of a suitable enumeration.
\end{remark}

Eventually, we deal with the exterior vertices case.

\begin{lem}[$\p$-robust stable minimisation for boundary vertices]
Let Assumption~\ref{assumption_faces} hold true.
Assume that $\nu$ is a boundary vertex and~\eqref{assumption_exterior_faces} is satisfied.
Then, we have
\[
m^{\rm g} \leq C(\gamma) m^{\rm g}_h ,
\qquad\qquad
m^{\rm d} \leq C(\gamma) m^{\rm d}_h.
\]
\end{lem}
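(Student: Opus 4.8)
The plan is to follow verbatim the strategy of the proof of Lemma~\ref{lemma:discrete_stab_interior}: the abstract reduction there shows that the only patch-dependent ingredient is the existence of an ordering of the triangles of $\tautildennu$ satisfying the structural requirements of \cite[Definition~B.1]{Vohralik:2024}, after which the bound $m^{\rm g} \le C(\gamma) m^{\rm g}_h$ follows from \cite[Theorem~D.1]{Vohralik:2024} and the divergence-constrained counterpart from \cite{ChaumontFrelet-Vohralik:2022,Ern-Vohralik:2020} (and the forthcoming \cite{Demkowicz-Vorhalik:2024}). These references include the Dirichlet boundary-patch case, in which the homogeneous condition is precisely the one encoded in $\Htildeonenu$ for $\nu\in\VcalnB$. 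So the whole task reduces to producing a valid enumeration for a boundary vertex patch, where ``valid'' now additionally requires that the fan of triangles incident to $\nu$ be swept monotonically from one of the two edges of $\EcalnuB$ to the other.

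First I would set up the sweep: start from the triangle of $\tautildennu$ incident to $\nu$ that also abuts one of the two boundary edges $e_1 \in \EcalnuB$, enumerate the triangles sharing $\nu$ by rotating the fan until the triangle incident to the second boundary edge $e_2 \in \EcalnuB$ is reached. Then, exactly as in the interior case, loop over the centroids $\xbfE$ of the polygons $\E \in \taunnu$ following the same orientation, and around each $\xbfE$ close the fan of sub-triangles covering $\E$ by crossing edges; two of those sub-triangles are already enumerated from the $\nu$-sweep. The roles of the two parts of Assumption~\ref{assumption_faces} split as follows: \eqref{assumption_interior_faces} guarantees, as in the interior case, that the sub-triangles around $\xbfE$ that are not yet enumerated do not share an edge with a sub-triangle of a different polygon $\E'\in\taunnu$, preserving condition~c); \eqref{assumption_exterior_faces} plays the analogous role along $\partial\Omega$, ensuring that a boundary polygon $\E$ meets $\partial\Omega$ along a single mesh edge, so that the two edges of $\EcalnuB$ are genuinely resolved by $\tautildennu$ and the endpoints of the boundary fan are well defined.

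The verification that conditions a)--c) hold is then identical to the interior case; the only additional point is the boundary-specific requirement, which is satisfied by construction since the sweep is initiated at $e_1$ and terminated at $e_2$. I expect the main obstacle to be the same one flagged in the remark after Lemma~\ref{lemma:discrete_stab_interior}, namely that an element sharing more than one facet, or a boundary element touching $\partial\Omega$ along several mesh edges, would break the monotone sweep and hence condition~c); this is exactly why Assumption~\ref{assumption_faces}, including \eqref{assumption_exterior_faces}, is invoked. A secondary care point is bookkeeping at a corner of $\Omega$, where $e_1$ and $e_2$ subtend a reentrant or convex angle, but this does not affect the combinatorics of the enumeration and only enters the constant $C(\gamma)$ through the shape-regularity of $\tautildennu$, which is already controlled by the mesh assumptions of Section~\ref{subsection:meshes}.
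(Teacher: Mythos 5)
Your route is genuinely different from the paper's, and it contains an unverified step that the paper deliberately avoids. The paper does \emph{not} construct an enumeration for the boundary patch at all: it symmetrises (reflects) the patch across the two exterior edges in $\EcalnuB$ sharing $\nu$, so as to obtain an \emph{interior} patch (see Figure~\ref{figure_symmetrisation}). Condition~\eqref{assumption_exterior_faces} is used to guarantee that the boundary part of the patch consists of exactly two faces and that the symmetrised patch satisfies~\eqref{assumption_interior_faces}; then Lemma~\ref{lemma:discrete_stab_interior} applies to the reflected patch, and the transfer back to the original boundary patch (with the Dirichlet condition encoded in $\Htildeonenu$) is done by the symmetry arguments of \cite[Section 7]{Ern-Vohralik:2020} and \cite[Section 7]{ChaumontFrelet-Vohralik:2022}. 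In this way the only enumeration ever needed is the interior one already built in Lemma~\ref{lemma:discrete_stab_interior}.

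Your proposal instead enumerates the boundary patch directly and invokes \cite[Definition B.1, Theorem D.1]{Vohralik:2024} (and its divergence-constrained analogue) ``including the Dirichlet boundary-patch case''. That is the gap: the enumeration-based stability result is quoted in the paper only for interior-type patches, and it is not established that it covers, for these generalised patches of $\tautildennu$ (where not all triangles contain $\nu$), the boundary constraint sets $\Htildeonenu$ with $v|_e=0$ on $\EcalnuB$ and the corresponding flux problem. Your additional requirement that the fan around $\nu$ be ``swept monotonically from one boundary edge to the other'' is an ad hoc guess at what such a boundary version would demand, not a condition taken from the cited definition, so conditions a)--c) plus your sweep do not by themselves yield the claimed bounds. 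Either you must prove (or locate) a boundary-patch version of the enumeration theorem, or you should close the argument as the paper does, by reflection to an interior patch followed by the symmetry transfer of \cite{ChaumontFrelet-Vohralik:2022,Ern-Vohralik:2020}. The combinatorial part of your sweep (using \eqref{assumption_interior_faces} around interior centroids and \eqref{assumption_exterior_faces} along $\partial\Omega$) is sensible and would likely be an ingredient of such a boundary version, but as written the reduction to the literature does not go through.
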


\begin{figure}
\centering
\begin{tikzpicture}[scale=3]

\coordinate (a) at (0,0);

\coordinate (a0) at ($(a)+ ( 20:0.9)$);
\coordinate (a1) at ($(a)+ (-40:1.1)$);
\coordinate (a2) at ($(a)+ (-100:0.8)$);
\coordinate (a3) at ($(a)+ (-190:0.7)$);

\coordinate (a01) at ($(a)+ ( -10:1.3)$);

\coordinate (a11) at ($(a)+ ( -60:1.3)$);
\coordinate (a12) at ($(a)+ ( -80:1.3)$);

\coordinate (a21) at ($(a)+ ( -120:1.2)$);
\coordinate (a22) at ($(a)+ ( -130:0.8)$);
\coordinate (a23) at ($(a)+ ( -150:1.0)$);

\draw[ultra thick] (a) -- (a0);
\draw[ultra thick] (a) -- (a1);
\draw[ultra thick] (a) -- (a2);
\draw[ultra thick] (a) -- (a3);

\draw[ultra thick] (a0) -- (a01) -- (a1);
\draw[ultra thick] (a1) -- (a11) -- (a12) -- (a2);

\draw[ultra thick] (a2) -- (a21) -- (a22) -- (a23) -- (a3);

\coordinate (b1) at ($(a)+ ( 70:1.1)$);
\coordinate (b2) at ($(a)+ ( 120:1.0)$);

\coordinate (b01) at ($(a)+ ( 50:1.3)$);

\coordinate (b11) at ($(a)+ ( 85:1.3)$);
\coordinate (b12) at ($(a)+ (105:1.3)$);

\coordinate (b21) at ($(a)+ (130:1.3)$);
\coordinate (b22) at ($(a)+ (150:0.9)$);
\coordinate (b23) at ($(a)+ (160:1.3)$);

\draw[dashed] (a) -- (b1);
\draw[dashed] (a) -- (b2);

\draw[dashed] (a0) -- (b01) -- (b1);
\draw[dashed] (b1) -- (b11) -- (b12) -- (b2);
\draw[dashed] (b2) -- (b21) -- (b22)  -- (b23)-- (a3);

\fill (a) circle (0.05cm);

\fill (a0) circle (0.05cm);
\fill (a1) circle (0.05cm);
\fill (a2) circle (0.05cm);
\fill (a3) circle (0.05cm);

\fill (a01) circle (0.05cm);

\fill (a11) circle (0.05cm);
\fill (a12) circle (0.05cm);

\fill (a21) circle (0.05cm);
\fill (a22) circle (0.05cm);
\fill (a23) circle (0.05cm);

\draw (b1) circle (0.05cm);
\draw (b2) circle (0.05cm);

\draw (b01) circle (0.05cm);

\draw (b11) circle (0.05cm);
\draw (b12) circle (0.05cm);

\draw (b21) circle (0.05cm);
\draw (b22) circle (0.05cm);
\draw (b23) circle (0.05cm);

\end{tikzpicture}
\caption{Symmetrisation of boundary patch}
\label{figure_symmetrisation}
\end{figure}
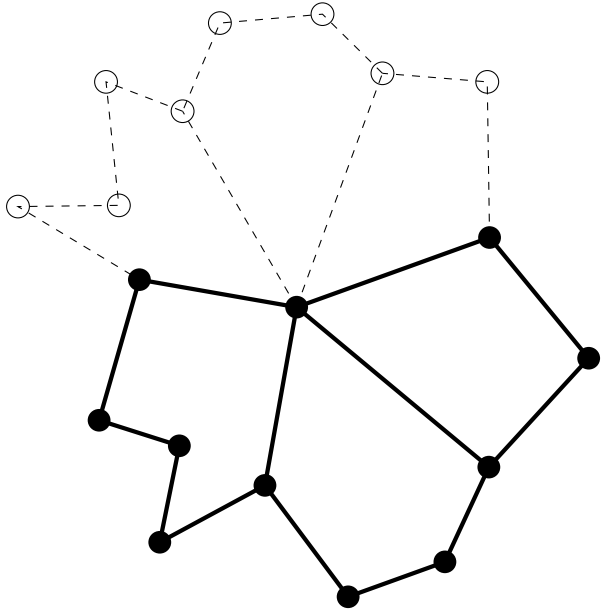

\begin{proof}
We only sketch the main ideas of the proof, since it closely follows that of
\cite[Section 7]{Ern-Vohralik:2020} and \cite[Section 7]{ChaumontFrelet-Vohralik:2022}.
The key idea is to symmetrise the patch around the boundary composed
of the exterior faces sharing the vertex~$\nu$
to obtain an interior patch.
This process is illustrated in Figure~\ref{figure_symmetrisation}.
Display~\eqref{assumption_exterior_faces} ensures that
the boundary actually only consists of two faces,
and the symmetrised patch satisfies~\eqref{assumption_interior_faces}.
The symmetrised patch is an interior patch, for which Lemma~\ref{lemma:discrete_stab_interior} now applies;
the arguments developed in~\cite{ChaumontFrelet-Vohralik:2022,Ern-Vohralik:2020}
can be invoked to show that,
if the discrete stable minimisation property holds true in the symmetrised patch,
then it must hold true in the original patch too.
\end{proof}

Collecting the estimates above, we end up with the following result.

\begin{thm}[$\p$-robust stable minimisation]
Let Assumption~\ref{assumption_faces} hold true.
Assume that Assumption~\ref{assumption_faces} is satisfied. Then, we have
\[
m^{\rm g} \leq C(\gamma) m^{\rm g}_h,
\qquad\qquad
m^{\rm d} \leq C(\gamma) m^{\rm d}_h\,
\]
where the constant~$C(\gamma)$ is independent of~$\p$.
\end{thm}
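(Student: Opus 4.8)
The plan is to derive this theorem as an immediate consequence of Lemma~\ref{lemma:discrete_stab_interior} and the subsequent lemma for boundary vertices, via a case split on the vertex~$\nu$ in~$\Vcaln$.

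First I would recall that $\Vcaln$ is the disjoint union of the interior vertices $\VcalnI$ and the boundary vertices $\VcalnB$, so it suffices to prove the two inequalities $m^{\rm g}\le C(\gamma)m^{\rm g}_h$ and $m^{\rm d}\le C(\gamma)m^{\rm d}_h$ separately in each of these cases. If $\nu\in\VcalnI$, then~\eqref{assumption_interior_faces} holds by Assumption~\ref{assumption_faces}, and Lemma~\ref{lemma:discrete_stab_interior} delivers both bounds with a constant depending only on the shape-regularity parameter~$\gamma$ and, crucially, \emph{not} on~$\p$. If $\nu\in\VcalnB$, then~\eqref{assumption_exterior_faces} holds, and the $\p$-robust stable minimisation lemma for boundary vertices gives the same two bounds, again with a $\p$-independent constant depending only on~$\gamma$. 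Taking $C(\gamma)$ to be the larger of the two constants produced, and noting that $\gamma$ is a single regularity parameter valid uniformly over the whole mesh (Section~\ref{subsection:meshes}), yields a constant that is a function of~$\gamma$ alone and independent of~$\p$, valid for every $\nu$ in~$\Vcaln$.

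Nothing further is required: the reverse estimates $m^{\rm g}\le m^{\rm g}_h$ and $m^{\rm d}\le m^{\rm d}_h$ are trivial (the discrete minimisation is over a subspace) and were already recorded at the start of the appendix. I would also remark that the collected bound is exactly what is invoked in the lower-bound lemmas of Sections~\ref{section:flux} and~\ref{section:potential} through~\eqref{eq_stab}.

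There is no genuine obstacle in this final step; it is pure bookkeeping over the vertex types. The substantive difficulties have already been handled in the two supporting lemmas: the explicit construction of an admissible enumeration of the subtriangulation~$\tautildennu$ meeting the requirements of \cite[Definition B.1]{Vohralik:2024} in the interior case — where~\eqref{assumption_interior_faces} is what prevents a polygon's sub-triangles from being "locked" by a neighbouring element and thereby violating point c) of that enumeration — and the symmetrisation of the boundary patch into an interior one in the boundary case, where~\eqref{assumption_exterior_faces} guarantees that the reflected patch satisfies~\eqref{assumption_interior_faces} so that Lemma~\ref{lemma:discrete_stab_interior} applies to it.
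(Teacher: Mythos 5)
Your proposal is correct and matches the paper's own argument, which simply collects Lemma~\ref{lemma:discrete_stab_interior} for interior vertices and the boundary-vertex lemma (via symmetrisation) and takes the resulting $\gamma$-dependent, $\p$-independent constant over the two cases. Nothing beyond this bookkeeping over vertex types is needed, exactly as you describe.
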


\end{document}